\documentclass[reqno]{amsart}
\numberwithin{equation}{section}

\usepackage[colorlinks,linkcolor={blue}]{hyperref}

\usepackage{bm}
\usepackage{amssymb}
\usepackage[alphabetic,nobysame]{amsrefs}
\usepackage{graphicx} 
\usepackage{amscd}
\usepackage{enumerate}
\usepackage{cancel}
\usepackage[font=footnotesize]{caption}
\usepackage{dsfont}
\usepackage[colorinlistoftodos]{todonotes}
\usepackage{tikz-cd}
\usepackage[all]{xy}
\usepackage{mathtools}
\usepackage{scrextend}

\definecolor{darkred}{rgb}{1,0,0} 
\definecolor{darkgreen}{rgb}{0,0.8,0}
\definecolor{darkblue}{rgb}{0,0,1}

\hypersetup{colorlinks,
linkcolor=darkblue,
filecolor=darkgreen,
urlcolor=darkred,
citecolor=darkgreen}

\makeatletter
\providecommand\@dotsep{5}
\makeatother

\definecolor{orange}{RGB}{253,85,0}
\definecolor{darkgreen}{RGB}{0,95,10}

\newcommand{\dgray}[1]{{\textcolor{darkgray}{#1}}}

 \newcommand{\Z}{\mathds{Z}}
 
 \newcommand{\R}{\mathds{R}}
 \newcommand{\Su}{\mathds{S}^1}

 \newcommand{\C}{\mathds{C}}

 \newcommand{\JJ}{J}

 \newcommand{\crit}{\mathrm{crit}}
 \newcommand{\reg}{\mathrm{reg}}
 
 \newcommand{\id}{\mathrm{id}}

 \newcommand{\vertiii}[1]{{\left\vert\kern-0.25ex\left\vert\kern-0.25ex\left
      \vert #1 \right\vert\kern-0.25ex\right\vert\kern-0.25ex\right\vert}}

 \DeclareMathOperator{\interior}{int}

 \DeclareMathOperator*{\ttoup}{\llongrightarrow}

\DeclareRobustCommand{\llongrightarrow}{\relbar\joinrel\relbar\joinrel\rightarrow}

 \theoremstyle{plain}
 \newtheorem{MainThm}{Theorem}

 \theoremstyle{definition} 
 
 \theoremstyle{plain}

 \newtheorem{Thm}{Theorem}[section]
 \newtheorem{Prop}[Thm]{Proposition}
 \newtheorem{Lemma}[Thm]{Lemma}
 \newtheorem{Cor}[Thm]{Corollary}
 
 \theoremstyle{definition}

 \newtheorem{Remark}[Thm]{Remark}
 \newtheorem{Example}[Thm]{Example}

\title{On potentials whose level sets are orbits}

\author[P. Bolle]{Philippe Bolle}
\address{Philippe Bolle\newline\indent LMA, Avignon Université\newline\indent 301 rue Baruch de Spinoza, BP 21239, F-84 916 Avignon, France}
\email{philippe.bolle@univ-avignon.fr}

\author[M. Mazzucchelli]{Marco Mazzucchelli}
\address{Marco Mazzucchelli\newline\indent CNRS, UMPA, \'Ecole Normale Sup\'erieure de Lyon\newline\indent 46 all\'ee d'Italie, 69364 Lyon, France}
\email{marco.mazzucchelli@ens-lyon.fr}

\author[A. Venturelli]{Andrea Venturelli}
\address{Andrea Venturelli\newline\indent LMA, Avignon Université\newline\indent 301 rue Baruch de Spinoza, BP 21239, F-84 916 Avignon, France}
\email{andrea.venturelli@univ-avignon.fr}

\date{March 28, 2024}

\keywords{Level orbits, Levi potentials, inverse curvature flow}

\subjclass[2020]{53E10, 37J06}

\thanks{M.~Mazzucchelli and A.~Venturelli are supported by the ANR grants CoSyDy (ANR-CE40-0014). M.~Mazzucchelli is also supported by the ANR grant COSY (ANR-21-CE40-0002).}

\begin{document}

\begin{abstract}
A level orbit of a mechanical Hamiltonian system is a solution of Newton equation that is contained in a level set of the potential energy. In 2003, Mark Levi asked for a characterization of the smooth potential energy functions on the plane with the property that any point on the plane lies on a level orbit; we call such functions Levi potentials. The basic examples are the radial monotone increasing smooth functions. In this paper we show that any Levi potential that is analytic or has totally path-disconnected critical set must be radial. Nevertheless, we show that every compact convex subset of the plane is the critical set of a Levi potential. A crucial observation for these theorems is that, outside the critical set, the family of level sets of a Levi potential forms a solution of the inverse curvature flow.

\tableofcontents
\end{abstract}

\maketitle

\vspace{-20pt}

\section{Introduction}

Let $U:\R^2\to\R$ be a smooth (meaning $C^\infty$) function, which will play the role of a potential energy, and consider a solution $q:I\to\R$ of the Newton equation $\ddot q=-\nabla U(q)$ defined on a maximal time interval $I\subset\R$. Such a solution is called a \emph{level orbit} when the function $s\mapsto U(q(s))$ is constant on $I$. In 2003, Mark Levi \cite{Levi:2003aa} asked for a  characterization of the smooth potentials $U:\R^2\to\R$ with the property that any point of $\R^2$ lies on a level orbit. We refer to the functions $U$ satisfying this property as to \emph{Levi potentials}.
The basic examples are the radial functions 
$U:\R^2\to\R$, $U(q)=u(\|q-q_0\|^2)$, 
where $u:[0,+\infty)\to\R$ is a monotone increasing smooth function. Indeed, any
  point $q_1\in\R^2$ lies on the level orbit $q(t)=e^{\omega t J}(q_1-q_0)+q_0$, where $\omega^2=2\dot u(\|q-q_0\|^2)$ and $J:\R^2\to\R^2$, $J(x,y)=(-y,x)$ is the complex structure of $\R^2$.

Clearly, one can easily construct  a non-radial Levi potential $V$ by patching together radial ones: given a constant $c\in\R$, a finite family of pairwise disjoint open balls $B_i=B^2(q_i,r_i)\subset\R^2$, and monotone increasing smooth functions $u_i:[0,+\infty)\to\R$ such that $u_i|_{[r_i,+\infty)}\equiv c$, a Levi potential is given by
\begin{align*}
V(q)
=
\left\{
  \begin{array}{@{}ll}
    u_i(\|q-q_i\|^2), & q\in B_i,\vspace{5pt} \\ 
    c, & q\not\in\bigcup_i B_i. 
  \end{array}
\right.
\end{align*}
Notice that the critical set of $V$ contains the boundaries of all the balls $B_i$, and in particular is not totally path-disconnected. Our first theorem asserts that this is indeed necessary for an exotic (meaning non-radial) Levi potential.

\begin{MainThm}\label{t:smooth}
Any Levi potential $U:\R^2\to\R$ whose critical set $\crit(U)$ is totally path-disconnected has a unique critical point and is radial.
\end{MainThm}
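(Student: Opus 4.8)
The plan is to run three reductions: from the Levi property to a pointwise condition on level curves, from that condition to a rigid global picture of the level sets, and finally to the radial conclusion via the linear structure of the inverse curvature flow.

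\emph{Step 1: the level-orbit condition.} First I would extract from the Newton equation what a level orbit must look like. If $q:\R\to\R^2$ is a non-constant level orbit with $U(q)\equiv E$, conservation of energy gives $|\dot q|\equiv v$ for a constant $v>0$; differentiating $U(q(s))\equiv E$ once shows $\dot q$ is everywhere tangent to the level set, and differentiating twice, using $\ddot q=-\nabla U(q)$, yields $v^2\kappa=|\nabla U|$ along $q$, where $\kappa$ denotes the geodesic curvature of the level curve with respect to the unit normal $\nabla U/|\nabla U|$. A constant-speed curve confined to a connected $1$-manifold sweeps out all of it, and conversely a level curve parametrized at the speed $v=\sqrt{|\nabla U|/\kappa}$ solves Newton's equation; since critical points lie on constant level orbits, this gives: \textbf{$U$ is a Levi potential iff every connected component $\Gamma$ of every regular level set is strictly convex ($\kappa>0$) and $|\nabla U|/\kappa$ is constant on $\Gamma$.} In particular each regular level curve is a simple closed strictly convex curve bounding a convex disk on which $U$ is below the level value, or a properly embedded strictly convex arc.

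\emph{Step 2: reduction to a single nest.} Next I would exploit the rigidity of strict convexity of \emph{all} regular level curves. Merging of two level-set components across a critical point would create a ``dumbbell'' component, which is not convex; hence $\crit(U)$ carries no such saddle, and a mountain-pass argument rules out two local minima (and, dually, two maxima). Together with a direct exclusion of the properly embedded non-compact level curves — where one can play off $\int_\Gamma|\nabla U|\,ds=v^2\int_\Gamma\kappa\,ds$ against the total curvature bound $\int_\Gamma\kappa\,ds\le\pi$ for such curves — this should force $U$ to attain its infimum and the sublevel sets $\{U\le c\}$ to be an increasing family of convex disks. The hypothesis enters precisely here: the limiting convex set $\bigcap_{c}\{U\le c\}$ is contained in $\crit(U)$, and a convex subset of a totally path-disconnected set is a single point $p$. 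Likewise $\{U\le c\}$ must exhaust $\R^2$ as $c\uparrow\sup U$, for otherwise $U$ would be constant on an open set. Conclusion: $\crit(U)=\{p\}$, and the level sets are a nested family $\Gamma_c=\{U=c\}$, $c\in(\min U,\sup U)$, of strictly convex curves shrinking to $p$.

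\emph{Step 3: the inverse curvature flow and Fourier analysis.} Finally, put $p$ at the origin and describe $\Gamma_c$ by its support function $h(\theta,c)$, $\theta\in\Su$, so that $\kappa=1/(h+h_{\theta\theta})$. Tracking the point of $\Gamma_c$ with outer normal $(\cos\theta,\sin\theta)$, its normal speed as $c$ varies is $1/|\nabla U|$, so $\partial_c h=1/|\nabla U|=(h+h_{\theta\theta})/v(c)^2$; after reparametrizing the levels by $\tau$ with $d\tau/dc=1/v(c)^2$ this becomes $\partial_\tau h=h+h_{\theta\theta}$, the support-function form of the inverse curvature flow. Writing $h(\theta,\tau)=\sum_{n\in\Z}a_n(\tau)e^{in\theta}$ gives $a_n(\tau)=e^{(1-n^2)(\tau-\tau_0)}a_n(\tau_0)$, so the modes with $|n|\ge2$ diverge and those with $|n|=1$ stay constant as $\tau\to-\infty$; but the curves shrink to $p$ as $c\downarrow\min U$, so $\max_\theta|h(\theta,\cdot)|\to0$ and every $a_n$ stays bounded. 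Hence $a_n\equiv0$ for $n\ne0$, $h$ is independent of $\theta$, every $\Gamma_c$ is a circle centered at $p$, and $U$ is radial with strictly increasing profile; in particular $p$ is its only critical point.

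\emph{Main obstacle.} Steps 1 and 3 are short and essentially computational. The real difficulty is Step 2: controlling the global topology of the level sets of a merely smooth (possibly non-proper) $U$ — excluding non-compact level curves, showing the sublevel sets are eventually convex disks exhausting the plane, and collapsing $\crit(U)$ to a single point using only total path-disconnectedness. I expect this topological/global analysis, rather than the flow computation, to be where the weight of the proof lies.
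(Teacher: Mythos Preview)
Your Step~3 is correct and is a clean alternative to the paper's use of Risa--Sinestrari (Proposition~2.2): once the convex level curves are known to foliate $\R^2\setminus\{p\}$ and shrink to $p$, the linear equation $\partial_\tau h=h+h_{\theta\theta}$ together with $h\to0$ as $\tau\to-\infty$ kills all nonzero Fourier modes. That part is genuinely simpler than what the paper does.

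The gap is in Step~2, and it is more concrete than you indicate. Your exclusion of non-compact regular level curves does not work: from $|\nabla U|=v^2\kappa$ on a properly embedded arc $\Gamma$ you only get $\int_\Gamma|\nabla U|\,ds=v^2\int_\Gamma\kappa\,ds\le\pi v^2$, and this is no contradiction---nothing prevents $\kappa$ (hence $|\nabla U|$) from decaying integrably along $\Gamma$. The paper's exclusion (Proposition~2.3) is a genuine PDE argument: run the inverse curvature flow forward for a short time $\epsilon$, propagate the bound $\int K\le\pi$ to all $t\in[0,\epsilon]$, deduce that $s\mapsto 1/A_\epsilon(s)$ lies in $L^1(\R)$ where $A_\epsilon(s)=\int_0^\epsilon K_{\sigma_t}(s)^{-1}dt$, and contradict this with a uniform bound on $\dot A_\epsilon$ forcing at most linear growth of $A_\epsilon$. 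You also do not address level arcs that are proper in $\reg(U)$ but accumulate on $\crit(U)$ (handled in the paper by Lemma~3.4 and the first half of Proposition~3.7), and the mountain-pass/dumbbell reasoning is shaky for non-Morse $U$ and lacks a Palais--Smale hypothesis.

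The paper's architecture sidesteps your Step~2 entirely. It does \emph{not} first prove $\crit(U)=\{p\}$. Once every regular level orbit is known to be periodic (Proposition~3.7, which already consumes Proposition~2.3), one takes such an orbit $\gamma_0$, sets $\gamma_t=\phi_t\circ\gamma_0$ for the flow of $N/K$, and shows the backward flow exists for all $t\le0$: if it stopped at finite $\tau$, the nested convex bodies $C_t$ would intersect in a convex compact $C$ with $\partial C\subset\crit(U)$; total path-disconnectedness forces the connected set $\partial C$ to a point, whence $\mathrm{length}(\gamma_t)\to0$, contradicting the exact scaling $\mathrm{length}(\gamma_t)=e^t\,\mathrm{length}(\gamma_0)$ from Lemma~2.1. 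This single length argument is where the hypothesis enters, and it replaces all of your Step~2. You could then graft your Fourier Step~3 onto this in place of Proposition~2.2, since by the same $e^t$ scaling any ancient solution shrinks to a point.
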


In the real analytic category, the situation is completely rigid, as asserted by our second theorem.

\begin{MainThm}\label{t:analytic}
Any analytic Levi potential is radial.
\end{MainThm}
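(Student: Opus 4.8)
The plan is to reduce Theorem~\ref{t:analytic} to Theorem~\ref{t:smooth} by showing that an analytic Levi potential has totally path-disconnected critical set. We may assume $U$ is non-constant (a constant function is radial in the degenerate sense). Then $\nabla U$ is a non-zero analytic map on the connected domain $\R^2$, so $\crit(U)=(\nabla U)^{-1}(0)$ is a proper real-analytic subset of $\R^2$; by the local structure theory of real-analytic sets of dimension at most one, $\crit(U)$ is totally path-disconnected (equivalently, zero-dimensional, equivalently, locally finite) \emph{unless} it contains an embedded real-analytic arc. So it suffices to rule out the latter.

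First I would record the local consequences of having an arc $\gamma\subset\crit(U)$: since $\nabla U\equiv 0$ along $\gamma$, the function $t\mapsto U(\gamma(t))$ is constant, say $U\equiv c_0$ on $\gamma$, and $\nabla^2U$ is everywhere degenerate along $\gamma$ with $\dot\gamma$ spanning a kernel direction. Next, fixing a point $p_0$ in the interior of $\gamma$ and choosing analytic coordinates $(x,y)$ near $p_0$ with $p_0=0$ and $\gamma=\{y=0\}$, the vanishing of $\nabla U$ on $\{y=0\}$ forces an expansion $U(x,y)=c_0+y^k\,b(x,y)$ with $k\ge 2$ and $b(x,0)\not\equiv 0$; after replacing $p_0$ by a nearby point I may assume $b(0,0)\neq 0$, so $b$ has a fixed sign near $p_0$. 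Combined with the description of the level sets $\{U=c\}$ for $c$ near $c_0$ as $C^2$-small graphs $y=h_c(x)$ over $\{y=0\}$, this yields precise asymptotics, as $c\to c_0$, of the curvature $\kappa$ and of $|\nabla U|$ along these graphs.

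The crux is to play these asymptotics against the rigidity imposed by the Levi property through the inverse curvature flow. Outside $\crit(U)$ the level sets are strictly convex curves along which $\nabla^2U(\tau,\tau)=|\nabla U|^2/v^2>0$ (here $\tau$ is the unit tangent and $v>0$ the constant speed of the level orbit through the point), so that $\kappa=|\nabla U|/v^2$; in particular $\kappa/|\nabla U|$ is a positive constant on each level curve, and the curvature vector points towards the lower values of $U$. I would show that a level curve very close to $\gamma$ cannot satisfy all of these constraints at once. Concretely: the level curves $\{U=c\}$ accumulating on $\gamma$ are convex and stay in a bounded neighbourhood of $\gamma$ (using that $\gamma$ is a bounded arc and, where necessary, passing to a connected piece of $\crit(U)$ whose adjacent level curves close up near it), so their lengths are uniformly bounded; but the asymptotics of the previous step force $\sup\kappa\to 0$ along these curves as $c\to c_0$, contradicting $\int\kappa\,ds=2\pi$ (Gauss--Bonnet for a closed convex curve). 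In the remaining configurations the same asymptotics instead force the curvature vector of some level curve to point towards the higher values of $U$, i.e.\ $\nabla^2U(\tau,\tau)<0$ somewhere on a level curve, again a contradiction. Either way $\crit(U)$ contains no arc, hence is totally path-disconnected, and Theorem~\ref{t:smooth} gives that $U$ is radial.

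The main obstacle I anticipate is the global control of the level curves accumulating on $\gamma$: one must know that they close up near $\gamma$ (rather than wandering off to a region where $|\nabla U|$ is not small), which requires understanding how $\gamma$ sits inside $\crit(U)$ --- whether $c_0$ is the minimum value of $U$, what happens at the endpoints of $\gamma$ (necessarily singular points of the analytic set $\crit(U)$, since such an arc cannot simply terminate), and how the parity of $k$ determines the side(s) of $\gamma$ from which level curves approach. Handling these cases, together with the exceptional locus along $\gamma$ where the leading-order asymptotics degenerate (where $b(\cdot,0)$ is, to leading order, a pure exponential and the curvature estimate must be pushed to higher order), is where the real work lies; everything else is either the structure theory of analytic sets or the already-established inverse-curvature-flow picture.
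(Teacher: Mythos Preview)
Your reduction to Theorem~\ref{t:smooth} rests on a false intermediate claim: it is \emph{not} true that every non-constant analytic Levi potential has totally path-disconnected critical set. Take $u(r)=\int_0^r (s-1)^2\,ds$ and $U(q)=u(\|q\|^2)$. Then $U$ is analytic and radial, and it is a Levi potential: every circle centered at the origin is a level orbit with angular speed $\omega$ determined by $\omega^2=2(\|q\|^2-1)^2\geq 0$, while points with $\omega=0$ are critical and sit on constant level orbits. Its critical set is $\{0\}\cup\{\|q\|=1\}$, which contains an entire circle. So the statement you set out to prove---that $\crit(U)$ contains no arc---is simply wrong, and Theorem~\ref{t:smooth} cannot be invoked.

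The same example pinpoints where your local asymptotics break. Near the critical circle the level curves are nearby concentric circles whose Euclidean curvature stays close to $1$; thus $\sup\kappa$ does \emph{not} tend to $0$ and $\int\kappa\,ds=2\pi$ holds comfortably. The error is that straightening $\gamma$ to $\{y=0\}$ is a non-isometric change of coordinates, so $h_c''$ no longer computes the Euclidean curvature of the level curve; you are only measuring the curvature \emph{relative to} $\gamma$, which of course goes to zero. The relation $\kappa=\|\nabla U\|/v^2$ does survive, but $v\to 0$ as well, and no contradiction arises. The obstacle is therefore not the global ``closing up'' issue you flagged at the end---the local step already fails.

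The paper's proof takes an entirely different route and never tries to constrain $\crit(U)$. It first uses Puiseux's theorem and a planar-graph argument to show that a non-constant analytic $U$ with a bounded complementary component of some level set must have a strict local extremum; it then rules out a strict local maximum via the curvature sign, and runs the inverse curvature flow backward from a small level circle around a strict local minimum. Proposition~\ref{p:periodic} forces those backward solutions to be round circles, so $U$ is radial on a neighbourhood of the minimum. Analyticity is invoked only at the last step, to propagate the identity $dU(q)Jq=0$ from that neighbourhood to all of $\R^2$.
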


Nevertheless, more exotic (non-analytic) Levi potentials, not obtained by just patching radial ones together, do exist. 

\begin{MainThm} \label{t:nonradial}
For any non-empty compact convex subset $C\subset\R^2$ there exist a Levi potential $U:\R^2\to\R$ with critical set $\crit(U)=C$.
\end{MainThm}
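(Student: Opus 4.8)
The plan is to build $U$ so that, outside $C$, its level sets are exactly the boundaries of the outer parallel bodies $C_t = \{x \in \R^2 : \dist(x,C) \le t\}$ for $t \ge 0$, since these curves evolve by the inverse curvature flow — the curvature at a boundary point at "time" $t$ is $1/(t + \rho)$ where $\rho$ is the relevant radius of curvature of $\partial C$, and this is precisely the flow identified in the excerpt as governing level sets of Levi potentials. Concretely, I would set $U(x) = f(\dist(x,C))$ for $x \notin C$ and $U \equiv f(0)$ on $C$, where $f : [0,+\infty) \to \R$ is a smooth monotone increasing function to be chosen. On $C$ the function is constant, hence every point of $C$ is a (trivial) critical point lying on the constant level orbit $q(t) \equiv q_0$; so $\crit(U) \supseteq C$, and one must also arrange $\crit(U) \cap (\R^2 \setminus C) = \emptyset$, which follows as long as $f' > 0$ on $(0,+\infty)$ and the distance function is smooth off $C$ — true here because $C$ is convex, so $x \mapsto \dist(x,C)$ is $C^\infty$ on $\R^2 \setminus C$ (indeed even on all of $\R^2\setminus C$ the nearest-point projection is single-valued and smooth, the only failure of smoothness at $\partial C$ itself being absorbed into the flat region).

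Next I would verify the level-orbit property on $\R^2 \setminus C$. Fix $x_1 \notin C$, let $t_1 = \dist(x_1,C) > 0$, and let $\Gamma = \partial C_{t_1}$ be the level set through $x_1$; it is a convex $C^{1,1}$ (in fact $C^\infty$ where $\partial C$ is, but at worst Lipschitz-curvature in general) closed curve, and I claim the speed needed for a Newton solution to traverse $\Gamma$ at constant energy is dictated by the ODE coming from $\ddot q = -\nabla U(q) = -f'(t_1)\,\nu$ where $\nu$ is the outward unit normal. Decomposing $\ddot q$ into tangential and normal parts along a constant-speed-adjusted parametrization of $\Gamma$, the tangential equation forces $|\dot q|$ constant and the normal equation reads $\kappa |\dot q|^2 = f'(t_1)$, i.e. $|\dot q|^2 = f'(t_1)/\kappa(q)$. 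For this to define an actual reparametrized closed orbit one needs $\kappa > 0$ everywhere on $\Gamma$, which holds for $t_1 > 0$ since $C_{t_1}$ has curvature $\le 1/t_1$ bounded and $\ge$ some positive quantity — wait, more carefully: $\kappa$ on $\partial C_{t}$ equals $\kappa_{\partial C}/(1 + t\,\kappa_{\partial C})$ at the corresponding point, which lies in $(0, 1/t]$, so it is strictly positive and the ODE is non-degenerate. Then $q(s)$ solving $\dot q = \sqrt{f'(t_1)/\kappa(q)}\,\tau$ (with $\tau$ the unit tangent) traces $\Gamma$ periodically and, one checks, solves Newton's equation with energy $\tfrac12|\dot q|^2 + U = $ const; so $x_1$ lies on a level orbit.

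The remaining point — and the step I expect to be the main obstacle — is the smoothness of $U$ across $\partial C$, i.e. at $t = 0$. The function $\dist(\cdot, C)$ is only Lipschitz near $\partial C$ and generically not $C^2$ there (its second derivatives blow up like $1/t$ in the directions tangent to $\partial C$ when $\partial C$ is curved), so $f(\dist(\cdot,C))$ will not be smooth merely because $f$ is. The fix is to choose $f$ vanishing to infinite order at $0$: take $f(t) = e^{-1/t}$ for $t>0$ (suitably normalized and made increasing), or more robustly $f(t) = \int_0^t e^{-1/\sigma}\,d\sigma$. One must then show that $x \mapsto f(\dist(x,C))$, extended by $f(0)=0$ on $C$, is $C^\infty$ on all of $\R^2$: away from $C$ it is a composition of smooth maps; the content is that all partial derivatives extend continuously by $0$ at $\partial C$. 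This is a careful but standard estimate: any derivative of order $k$ of $f(\dist(x,C))$ is a finite sum of terms $f^{(j)}(\dist)\cdot(\text{polynomial in derivatives of }\dist\text{ of total order} \le k)$, and since the derivatives of $\dist$ of order $m$ blow up at worst like $\dist^{1-m}$ while $f^{(j)}(\dist)$ decays faster than any power, each term tends to $0$. I would isolate this as a lemma ("if $g$ is the distance to a convex body and $f$ vanishes to infinite order at $0$ with all $f^{(j)}$ controlled, then $f\circ g \in C^\infty$"), prove it by the Faà di Bruno bookkeeping above together with the elementary bounds on derivatives of convex distance functions, and then assemble Theorem C by combining this smoothness with the level-orbit verification and the computation $\crit(U) = C$.
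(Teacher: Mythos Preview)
Your construction has a fundamental flaw: the outer parallel bodies $C_t=\{x:\dist(x,C)\le t\}$ do \emph{not} evolve by the inverse curvature flow. Under that flow the normal speed is $1/K_{\gamma_t}$, whereas the boundaries $\partial C_t$ move outward with unit normal speed; these agree only when every $\partial C_t$ is a round circle, i.e.\ when $C$ is already a disk or a point. The consequence appears exactly where you write the tangential and normal equations. The tangential equation does force $|\dot q|$ to be constant along a level orbit (this is energy conservation, Lemma~\ref{l:const_speed}), while your normal equation gives $|\dot q|^2=f'(t_1)/\kappa(q)$. Since $\kappa$ is non-constant along $\partial C_{t_1}$ whenever $C$ is not a disk, these two constraints are incompatible and no level orbit exists on that level set. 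The curve you then write down, with $\dot q=\sqrt{f'(t_1)/\kappa(q)}\,\tau$, has non-constant speed and hence nonzero tangential acceleration, while $-\nabla U$ is purely normal to the level set; it does not solve Newton's equation.

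There is a second, independent problem. For a general convex $C$ (a segment, a polygon) the function $\dist(\cdot,C)$ is only $C^{1,1}$ on $\R^2\setminus C$, not $C^\infty$: its Hessian jumps across the half-lines emanating from the non-smooth points of $\partial C$, at positive distance from $C$. No choice of $f$ repairs this, because the failure of smoothness is not concentrated at $\partial C$. The paper resolves both issues simultaneously by evolving the support function of $C$ via $\partial_t h_t=h_t+\ddot h_t$ (Lemmas~\ref{l:PDE_support} and~\ref{l:PDE_support_2}); after the substitution $g_t=e^{-t}h_t$ this is the heat equation, which instantly regularizes $h_0$ to a smooth strictly convex $h_t$, and by construction the resulting curves are a genuine solution of the inverse curvature PDE---precisely what is needed for the level-orbit verification.
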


The crucial observation behind the proofs of these theorems is the link between Levi potentials and the solutions of the inverse curvature flow in the Euclidean plane, which is the geometric evolution PDE $\partial_t\gamma_t(s)=K_{\gamma_t}(s)^{-1}N_{\gamma_t}(s)$. Here $\gamma_t$ is a family of smooth curves on the plane with negative normal vector field $N_{\gamma_t}$ and signed geodesic curvature $K_{\gamma_t}$. As it turns out, outside the critical set of a Levi potential $U$, the family of level curves $U^{-1}(c)$, $c\in\R$, can be parametrized into a solution of the inverse curvature flow (Proposition~\ref{p:level_orbit_inv_curv_flow}).

There is a large literature on geometric flows for families of curves evolving according to a function $f$ of the curvature, such as the curve shortening flow \cite{Gage:1983aa, Gage:1984aa, Grayson:1989aa} for $f=\id$, and flows allowing general monotone decreasing $f$ \cite{Chow:1996aa}. Generalization of the inverse curvature flow in higher dimension has also been widely studied in the literature since the work of Gerhardt \cite{Gerhardt:1990aa}, Urbas \cite{Urbas:1991aa}, and Huisken-Ilmanen \cite{Huisken:2001aa, Huisken:2008aa}. For the proofs of Theorems~\ref{t:smooth} and~\ref{t:analytic}, we will need a recent result of Risa and Sinestrari \cite{Risa:2020aa}, which in low dimension asserts that the unique curves admitting solutions of the inverse curvature flow for all negative times are the round circles (Proposition~\ref{p:periodic}). We will also need a  result on the non-existence of solutions of the inverse curvature flow starting on properly embedded open curves (Proposition~\ref{p:open}).

While the present work focuses on mechanical Hamiltonian systems on the plane, one could generalize the notion of Levi potential to higher dimensions in several possible ways. Such generalizations, and rigidity/flexibility results in the spirit of Theorems~\ref{t:smooth}, \ref{t:analytic}, and \ref{t:nonradial}, are the subject of ongoing investigation.

\subsection{Organization of the paper}
In Section~\ref{s:inverse_curvature_flow}, we recall some known features of the inverse curvature flow, and establish a few new ones. In particular, we provide a short proof of the low dimensional version of Risa and Sinestrari's theorem (Proposition~\ref{p:periodic}), the proof of the non-existence of solutions of the inverse curvature flow starting on properly embedded open curves (Proposition~\ref{p:open}), and the proof of the existence of solutions starting on curves that are the boundary of a given convex compact set (Lemmas~\ref{l:PDE_support} and \ref{l:PDE_support_2}). In Section~\ref{s:Levi_potentials}, we establish several properties of Levi potentials, in particular drawing the connection with the inverse curvature flow, and prove Theorems~\ref{t:smooth}, \ref{t:analytic}, and \ref{t:nonradial}.

\subsection{Acknowledgments}
We thank Mark Levi for raising the original question that motivated the present work. We also thank the anonymous referee for insightful comments, and for pointing out several references.

\section{The inverse curvature flow}\label{s:inverse_curvature_flow}

\subsection{The PDE}\label{ss:PDE}
Let $\JJ:\R^2\to\R^2$, $\JJ(x,y)=(-y,x)$ be the complex structure of $\R^2$.
For a smooth immersed curve $\gamma:\R\looparrowright\R^2$, we denote by $T_\gamma:\R\to\R^2$ its positive unit tangent vector field, by $N_\gamma:\R\to\R^2$ its negative normal vector field, and by $K_{\gamma}:\R\to\R^2$ its signed geodesic curvature, i.e.
\begin{align*}
T_\gamma(s)=\frac{\dot\gamma(s)}{\|\dot\gamma(s)\|},
\quad
N_\gamma(s)=-\JJ T_\gamma(s),
\quad
K_\gamma(s)=-\frac{\langle\dot T_\gamma(s),N_\gamma(s)\rangle}{\|\dot\gamma(s)\|}.
\end{align*}
We consider the PDE
\begin{align}\label{e:PDE}
\partial_t\gamma_t=\frac{N_{\gamma_t}}{K_{\gamma_t}},
\end{align}
where each $\gamma_t$ is a smooth immersed curve as above with nowhere-vanishing geodesic curvature. If \eqref{e:PDE} admits a family of solutions $\gamma_t$ for $t\in(t_0,t_1)$, we say that such family evolves according to the inverse curvature flow.

\begin{Example}\label{ex:circular_solution}
The simplest example of solution of the inverse curvature flow is the \emph{circular} one, given by
\begin{align*}
 \gamma_t(s):=q_0 + e^{t} e^{v s \JJ}q_1,\qquad\forall t,s\in\R,
\end{align*}
where $q_0\in\R^2$, $q_1\in\R^2\setminus\{0\}$, and $v\in\R\setminus\{0\}$. Every $\gamma_t$ is a $2\pi|v|^{-1}$-periodic parametrization of a circle of radius $e^t\|q_1\|$ centered at $q_0$.
\end{Example}

Notice that the PDE~\eqref{e:PDE} is geometric: if $\gamma_t(s)$ is a solution defined for all $t\in(t_0,t_1)$ and $s\in(s_0,s_1)$, the composition of the family of curves $\gamma_t$ with a diffeomorphism of the form $(s_0',s_1')\to(s_0,s_1)$ is still a solution.
The following lemma describes the evolution of the speed of the solutions.

\begin{Lemma}\label{l:evolution_speed}
Let $\gamma_t$ be a solution of~\eqref{e:PDE}. Then $\|\dot\gamma_{t_1}(s)\|=e^{t_1-t_0}\|\dot\gamma_{t_0}(s)\|$ for all $s,t_0,t_1$ for which both sides are defined.
\end{Lemma}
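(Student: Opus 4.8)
The plan is to fix the parameter $s$, set $v(t):=\|\dot\gamma_t(s)\|>0$, and show that $v^2$ satisfies the linear ODE $\tfrac{d}{dt}v^2=2v^2$ along the (connected) time-interval on which the solution is defined. Integrating this between $t_0$ and $t_1$ gives $v(t_1)^2=e^{2(t_1-t_0)}v(t_0)^2$, and hence the asserted identity after taking positive square roots (legitimate since $\gamma_t$ is immersed, so $v>0$).

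The computation of $\tfrac{d}{dt}v^2$ proceeds as follows. Since $(t,s)\mapsto\gamma_t(s)$ is smooth, the partial derivatives $\partial_t$ and $\partial_s$ commute, so
\[
\partial_t v^2
=2\langle \partial_t\dot\gamma_t,\dot\gamma_t\rangle
=2\big\langle \partial_s(\partial_t\gamma_t),\dot\gamma_t\big\rangle
=2\Big\langle \partial_s\Big(\tfrac{N_{\gamma_t}}{K_{\gamma_t}}\Big),\dot\gamma_t\Big\rangle,
\]
using the PDE~\eqref{e:PDE} in the last step. I then invoke the Frenet-type equations for a (not necessarily arclength) parametrized curve $\gamma$ of speed $\|\dot\gamma\|$: from $\langle T_\gamma,T_\gamma\rangle\equiv1$ one gets $\dot T_\gamma\perp T_\gamma$, so $\dot T_\gamma$ is a scalar multiple of $N_\gamma$, and the definition of $K_\gamma$ from Subsection~\ref{ss:PDE} forces $\dot T_\gamma=-K_\gamma\|\dot\gamma\|\,N_\gamma$; differentiating $N_\gamma=-\JJ T_\gamma$ and using $\JJ^2=-\id$ then gives $\dot N_\gamma=K_\gamma\|\dot\gamma\|\,T_\gamma$. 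Applying the quotient rule,
\[
\partial_s\Big(\frac{N_{\gamma_t}}{K_{\gamma_t}}\Big)
=\frac{\dot N_{\gamma_t}}{K_{\gamma_t}}-\frac{\dot K_{\gamma_t}}{K_{\gamma_t}^2}\,N_{\gamma_t}
=\|\dot\gamma_t\|\,T_{\gamma_t}-\frac{\dot K_{\gamma_t}}{K_{\gamma_t}^2}\,N_{\gamma_t}.
\]
Pairing this with $\dot\gamma_t=\|\dot\gamma_t\|\,T_{\gamma_t}$, the $N_{\gamma_t}$-component is annihilated because $\langle T_{\gamma_t},N_{\gamma_t}\rangle=0$, and we are left with $\partial_t v^2=2\|\dot\gamma_t\|^2=2v^2$, as wanted.

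I do not expect any genuine obstacle. The two points requiring attention are the sign bookkeeping in the Frenet equations, which is pinned down entirely by the conventions fixed for $T_\gamma$, $N_\gamma$, and $K_\gamma$ in Subsection~\ref{ss:PDE}, and the exchange of $\partial_t$ with $\partial_s$, which is justified because a solution $\gamma_t(s)$ of~\eqref{e:PDE} is smooth jointly in the pair $(t,s)$.
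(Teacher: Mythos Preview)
Your proof is correct and follows essentially the same route as the paper: both compute $\partial_t\|\dot\gamma_t(s)\|^2$ by commuting $\partial_t$ and $\partial_s$, invoke the Frenet relation $\dot N_{\gamma_t}=K_{\gamma_t}\dot\gamma_t$, note that the $N_{\gamma_t}$-term from the quotient rule is orthogonal to $\dot\gamma_t$, obtain the ODE $\partial_t\|\dot\gamma_t\|^2=2\|\dot\gamma_t\|^2$, and integrate. The only difference is that you spell out the Frenet sign bookkeeping in more detail, which is fine.
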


\begin{proof}
Since $\dot N_{\gamma_t}=K_{\gamma_t}\dot\gamma_t$, we have
\begin{align*}
\partial_t (\|\dot\gamma_t(s)\|^2)
& =
2 \langle \dot\gamma_t(s),\partial_t\dot\gamma_t(s)\rangle
=
2\langle \dot{\gamma_t}(s) , \partial_s \big(\tfrac{N_{\gamma_t}(s)}{K_{\gamma_t}(s)}\big)\rangle \\
& = 
2\langle \dot{\gamma_t}(s) , \tfrac{\dot N_{\gamma_t}(s)}{K_{\gamma_t}(s)}\rangle 
= 
2\|\dot\gamma_t(s)\|^2.
\end{align*}
Therefore, for any fixed $s$, the function $t\mapsto \|\dot\gamma_t(s)\|^2$ is a solution of the ODE $\partial_t (\|\dot\gamma_t(s)\|^2)=2\|\dot\gamma_t(s)\|^2$, and our claim follows.
\end{proof}

Lemma~\ref{l:evolution_speed} implies that each curve $\gamma_t$ in a family evolving under the inverse curvature flow is parametrized with constant speed provided the same is true for one value of $t$. Therefore, from now on, we will always assume that 
$\partial_s \|\dot\gamma_t(s)\|\equiv0$. Notice that, if $\gamma_0$ is a periodic curve, all the $\gamma_t$ are periodic curves of the same period; up to a time-reparametrization independent of $t$, all such curves $\gamma_t$ can be defined on the circle $\Su=\R/2\pi\Z$.

The following statement, due to Risa-Sinestrari \cite{Risa:2020aa}, is one of the ingredients for Theorem \ref{t:smooth} and \ref{t:analytic}. Actually, their result holds in arbitrary  dimension for certain hypersurfaces evolving with the inverse mean-curvature flow. For the reader convenience, we shall provide a simple proof of the stated low dimensional result in Section~\ref{ss:proofs_propositions}.

\begin{Prop}\label{p:periodic}
The only periodic solutions $\gamma_t:\Su\to\R^2$ of the inverse curvature flow defined for all $t\leq0$ are the circular ones $($Example~\ref{ex:circular_solution}$)$.
\end{Prop}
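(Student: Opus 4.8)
The plan is to reduce the problem to an ODE for the support function of the evolving curves. First I would observe that, by Lemma~\ref{l:evolution_speed}, the total length satisfies $L(\gamma_t)=e^{t-t_0}L(\gamma_{t_0})$, so $L(\gamma_t)\to0$ as $t\to-\infty$. In particular the curves shrink to a point; after translating, one may try to normalize so that this limit point is the origin. The key structural fact is that the inverse curvature flow in the plane preserves convexity: if $\gamma_0$ is a closed embedded curve bounding a convex region, so is every $\gamma_t$. I would establish (or recall) that the flow exists for $\gamma_t:\Su\to\R^2$ only if each $\gamma_t$ is embedded and convex with $K_{\gamma_t}>0$ everywhere — this is forced by the hypothesis that the solution is defined for all $t\le0$ together with the requirement that the geodesic curvature never vanishes and the curve stays embedded (an immersed non-embedded closed curve with nowhere-zero curvature has total turning $\ge 4\pi$, and one should check this is incompatible with the flow surviving to $t=-\infty$).

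Once convexity is in hand, I would pass to the support function $h_t:\Su\to\R$, where $h_t(\theta)$ is the signed distance from the origin to the tangent line with outward normal $(\cos\theta,\sin\theta)$. For a convex curve the curvature radius is $\rho_t=h_t+\ddot h_t$, and a direct computation shows the inverse curvature flow becomes the \emph{linear} PDE
\begin{align*}
\partial_t h_t(\theta) = h_t(\theta)+\ddot h_t(\theta).
\end{align*}
This is the heat equation on the circle conjugated by $e^t$. Expanding $h_t$ in a Fourier series $h_t(\theta)=\sum_{n\in\Z}a_n(t)e^{in\theta}$ gives $\dot a_n(t)=(1-n^2)a_n(t)$, so $a_n(t)=a_n(0)e^{(1-n^2)t}$. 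For $|n|\ge2$ the coefficient $(1-n^2)<0$, hence $a_n(t)=a_n(0)e^{(1-n^2)t}\to\infty$ as $t\to-\infty$ unless $a_n(0)=0$; the same blow-up argument (comparing with $a_0,a_{\pm1}$, which grow only like $e^t$ and are constant respectively) shows that boundedness of, say, the ratio $\max h_t/\min h_t$ — or more simply the fact that $h_t$ must stay the support function of a genuine convex curve, i.e.\ $h_t+\ddot h_t>0$, for all $t\le0$ — forces $a_n(0)=0$ for all $|n|\ge2$. The surviving terms $a_0(t)e^{i\cdot 0}+a_1(t)e^{i\theta}+a_{-1}(t)e^{-i\theta}$ correspond exactly to a circle: the $n=\pm1$ modes encode a translation of the center (which is constant in $t$, being an $e^{0\cdot t}$ mode — wait, $n=\pm1$ gives $e^{(1-1)t}=1$, so the center is fixed) and the $n=0$ mode $a_0(t)=a_0(0)e^{t}$ is the radius, growing like $e^t$. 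This is precisely the circular solution of Example~\ref{ex:circular_solution} (after recentering).

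The main obstacle I anticipate is not the Fourier analysis, which is routine once the equation is linearized, but rather the \emph{qualitative preliminary step}: showing that a periodic solution defined for all $t\le0$ must consist of embedded convex curves, and that it shrinks to a point as $t\to-\infty$ so that the support function is well defined relative to a fixed interior point. One has to rule out, e.g., solutions by immersed curves with higher turning number, and one has to argue that the embeddedness/convexity cannot degenerate as $t\to-\infty$. I would handle this by a monotonicity or maximum-principle argument directly on $K_{\gamma_t}$ (or on $\rho_t$): if at some time the curve failed to be convex, then tracking $\min_s K_{\gamma_t}(s)$ backwards in time and using that $L(\gamma_t)\to 0$ should produce a contradiction with the curvature staying nowhere-zero on an embedded closed curve of vanishing length. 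Alternatively, since the statement allows a "simple proof," one may be able to sidestep the classification of immersed solutions entirely by working with the curvature $k_t$ as a function on $\Su$ and its evolution equation, extracting the constraint that $k_t$ stays positive and $2\pi$-periodic in arclength for all $t\le 0$, and concluding again via Fourier modes that $k_t$ must be spatially constant.
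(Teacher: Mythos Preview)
Your Fourier approach via the support function is correct and genuinely different from the paper's argument. The paper never linearizes in its proof of this proposition; instead it works with the rescaled inverse curvature $a_t:=K_{\sigma_t}^{-1}$ (where $\sigma_t=e^{-t}\gamma_t$), which satisfies the nonlinear PDE $\partial_t a_t=a_t^2\ddot a_t$, and runs an entropy--monotonicity argument: the quantity $b(t)=\int_{\Su}\ln a_t$ has $\dot b=-\int\dot a_t^2\le0$ and $\ddot b\ge0$, so if $a_0$ is nonconstant then $b(t)\to+\infty$ as $t\to-\infty$; combining this with the conservation of $\int a_t^{-1}$ yields a differential inequality $\dot b\le -e^{b/\pi}/8\pi$ that blows up in finite backward time, a contradiction. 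Interestingly, the paper \emph{does} record exactly your linearization $\partial_t h_t=h_t+\ddot h_t$ later (Lemma~\ref{l:PDE_support}), but uses it only for the construction in Theorem~\ref{t:nonradial}, not here. Your route is shorter once the support-function PDE is in hand: the bound $|\widehat{(h_t+\ddot h_t)}(n)|\le\widehat{(h_t+\ddot h_t)}(0)=a_0(0)e^t$ (valid because $h_t+\ddot h_t>0$) forces $|a_n(0)|\,e^{(1-n^2)t}\le C e^t$ for all $t\le0$, hence $a_n(0)=0$ for $|n|\ge2$.

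On the ``main obstacle'' you flag: it is much smaller than you fear, and you should not try to prove embeddedness or convexity as a preliminary step. The only standing hypothesis is that $K_{\gamma_t}$ is nowhere zero; after a choice of orientation, $K_{\gamma_t}>0$. For a closed immersed curve with $K>0$ and turning number $k\ge1$, the Gauss map is a degree-$k$ covering of $\Su$, so you can still reparametrize by the normal angle and obtain a ``support function'' $h_t$ on $\R/2\pi k\Z$ satisfying the same linear PDE $\partial_t h_t=h_t+\ddot h_t$ with $h_t+\ddot h_t>0$ (these are pointwise identities, independent of embeddedness). The Fourier modes are $e^{i(n/k)\theta}$, and the same positivity bound kills every mode with $n\ne0,\pm k$, leaving a $k$-fold circle---still a circular solution in the sense of Example~\ref{ex:circular_solution}. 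The paper's entropy argument has the virtue of never needing this reparametrization at all, but once you see that the Gauss-map parametrization requires only $K>0$ and not convexity, your approach goes through cleanly.
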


We will also need the following non-existence result,  proved in Section~\ref{ss:proofs_propositions}. The statement also follows from \cite[Theorem~1.3]{Choi:2021aa}.

\begin{Prop}\label{p:open}
 There is no solution of the inverse curvature flow $\gamma_t$, defined for $t\in[0,\epsilon]$ with $\epsilon>0$, such that $\gamma_0:\R\hookrightarrow\R^2$ is a proper embedding.
\end{Prop}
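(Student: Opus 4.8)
The plan is to argue by contradiction, exploiting the fact that a properly embedded line $\gamma_0:\R\hookrightarrow\R^2$ divides the plane into two connected components, and that the flow~\eqref{e:PDE} moves every point of $\gamma_0$ in the direction of the negative normal $N_{\gamma_0}$ with speed $1/K_{\gamma_0}$. Suppose such a solution $\gamma_t$, $t\in[0,\epsilon]$, existed. Since $K_{\gamma_0}$ is continuous and nowhere vanishing, it has a constant sign on $\R$; without loss of generality $K_{\gamma_0}>0$ everywhere, so the flow pushes $\gamma_0$ into the half-plane-like component cut out on the $N_{\gamma_0}$ side, i.e.\ toward the \emph{concave} side of the curve. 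The first step is to set up coordinates: near any point, after a rotation, write $\gamma_0$ locally as a graph $y=g(x)$ with $g''$ of a fixed sign (determined by the sign of $K_{\gamma_0}$), and record the standard comparison principle for~\eqref{e:PDE}, namely that if two solutions are ordered (one lies in the closed region bounded by the other on the normal side) at $t=0$, they remain ordered for $t>0$ as long as both exist, and moreover the inverse curvature flow admits the circular solutions of Example~\ref{ex:circular_solution} as barriers.

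The second step, which I expect to be the heart of the argument, is to derive a contradiction from properness using these circular barriers. Because $\gamma_0$ is properly embedded and has nowhere-vanishing curvature of constant sign, it is a locally convex proper curve; pick a point $p=\gamma_0(s_*)$ and let $\kappa_* = K_{\gamma_0}(s_*)>0$. The osculating behaviour and the sign of curvature mean that, locally, $\gamma_0$ lies on the convex side of a small circle tangent to it at $p$ — more precisely, a circle of radius $r < 1/(\sup K_{\gamma_0})$ internally tangent at $p$ can be chosen to lie entirely in the closed region on the $N_{\gamma_0}$-side of $\gamma_0$ near $p$, and by properness one can in fact arrange (shrinking $r$ and using that $\gamma_0$ escapes every compact set) that the whole closed disc of radius $r$ centered appropriately lies in the closed component $\Omega$ bounded by $\gamma_0$ on the negative-normal side, touching $\gamma_0$ only at $p$. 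Run the circular solution $\sigma_t$ from Example~\ref{ex:circular_solution} with $\sigma_0$ this circle: its radius grows like $r e^{t}$, so it is defined for all $t\ge 0$. By the comparison principle, $\sigma_t$ must stay inside the region bounded by $\gamma_t$ for all $t\in[0,\epsilon]$. But the enclosed region of $\gamma_t$ — think of the "width" of $\Omega$ transverse to $\gamma_0$ — does not expand fast enough: since each point of $\gamma_0$ moves inward with finite speed $1/K_{\gamma_0}(s)\le 1/\min_K$ only where $K$ is bounded below, and more importantly $\gamma_0$ being an embedded line means $\Omega$ is "thin" in directions where $\gamma_0$ nearly closes up, one forces the expanding disc $\sigma_t$ to cross $\gamma_t$ in finite time $\le\epsilon$, a contradiction.

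To make the second step airtight, the cleanest route is probably to avoid delicate geometry of $\Omega$ and instead compare the disc with $\gamma_t$ directly via containment of images: the circle $\sigma_0\subset\overline\Omega$ touches $\gamma_0$ only at $p$, the two are tangent there, and at $p$ the flow speeds satisfy $|\partial_t\sigma_0(p)| = r < 1/\kappa_* = |\partial_t\gamma_0(s_*)|$ if we also arrange $r<1/\sup_s K_{\gamma_0}(s)$ (possible since $\sup K$ is finite on the relevant compact piece, and globally we only need the local comparison), so $\sigma_t$ instantaneously pulls strictly inside $\gamma_t$; then the avoidance principle keeps them disjoint for $t\in(0,\epsilon]$ with $\sigma_t$ enclosed by $\gamma_t$. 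Now let $t\to$ a time where $\sigma_t$ has radius so large that its enclosed disc cannot fit on the concave side of a properly embedded line at all — concretely, a properly embedded line in $\R^2$ cannot bound a disc of arbitrarily large radius on its designated side unless it is... well, it can if it's, say, a parabola.

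Hmm, so the naive "discs can't fit" fails for parabola-like curves. The actual obstruction must come from the \emph{other} side. Let me restate: the real point (and the main obstacle) is that a proper embedding $\gamma_0:\R\hookrightarrow\R^2$ with $K_{\gamma_0}>0$ everywhere must, by the total-curvature / Gauss–Bonnet type reasoning for complete convex curves, be \emph{simple} and its convex side $\Omega$ is the region it bounds; but then $\gamma_0 = \partial\Omega$ with $\Omega$ convex, and applying Lemma~\ref{l:PDE_support} / Lemma~\ref{l:PDE_support_2} (or rather their proof technique) to the \emph{complement} shows the flow on $\gamma_0$ would coincide with a flow of boundaries of convex sets — and a noncompact convex set's boundary, flowed by inverse curvature, would need the support function to solve a parabolic equation that degenerates at infinity precisely because $K_{\gamma_0}\to 0$ as $s\to\pm\infty$ (forced since the total turning of a proper line is at most $2\pi$, so curvature is not bounded below). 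Then $1/K_{\gamma_t}\to\infty$, the speed blows up, and no solution exists even on $[0,\epsilon]$. So the plan's final step is: show $\inf_s K_{\gamma_0}(s)=0$ by a total-curvature estimate (a properly embedded arc has total absolute curvature $\le\pi$ plus boundary terms, forcing curvature decay at the ends), deduce $\sup_s (1/K_{\gamma_0}(s))=+\infty$, and conclude via Lemma~\ref{l:evolution_speed} and a short-time-existence obstruction that $\partial_t\gamma_t$ cannot be defined, even for arbitrarily small $t>0$, near the ends. I expect reconciling "$K_{\gamma_0}$ could a priori be bounded below on a parabola-like curve" with "total curvature is finite" to be exactly the subtle point, resolved by noting properness plus embeddedness plus nowhere-vanishing curvature forces either a closed curve (excluded, as $\gamma_0:\R\hookrightarrow\R^2$) or curvature tending to $0$.
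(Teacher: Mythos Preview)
Your proposal does not arrive at a working argument. You correctly abandon the circular-barrier idea (a parabola accommodates arbitrarily large discs on its concave side), and you correctly identify that properness together with positive curvature forces the total curvature to be at most $\pi$ (this is exactly Lemma~\ref{l:curvature_plane_lines} in the paper), which in turn forces $\inf_s K_{\gamma_0}(s)=0$. But the concluding step---``$\sup_s 1/K_{\gamma_0}(s)=+\infty$, hence the flow cannot be defined even for small $t>0$''---is not a proof. At each fixed $s$ the normal speed $1/K_{\gamma_0}(s)$ is finite; the PDE is perfectly well-posed pointwise, and unbounded speed at spatial infinity does not by itself obstruct the existence of a smooth family $\gamma_t$ on $[0,\epsilon]\times\R$. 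Lemma~\ref{l:evolution_speed} only controls the \emph{tangential} speed $\|\dot\gamma_t\|$ and says nothing about $1/K_{\gamma_t}$. Since you are arguing by contradiction, you are \emph{assuming} such a $\gamma_t$ exists; to finish you must extract an honest contradiction from that assumption, and this mechanism is entirely absent from your outline.

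The paper supplies exactly this missing mechanism. Working with the rescaled curves $\sigma_t=e^{-t}\gamma_t$ and $a_t=1/K_{\sigma_t}$, it sets $A_T(s)=\int_0^T a_t(s)\,dt$. The evolution law $\partial_t a_t=a_t^2\ddot a_t$ gives $\ddot A_T=K_{\sigma_0}-K_{\sigma_T}$, and combining this with the total-curvature bound $\int_{\R} K_{\sigma_0}\le\pi$ from Lemma~\ref{l:curvature_plane_lines} and the positivity of $A_T$ propagates the same bound forward: $\int_{\R} K_{\sigma_T}\le\pi$ for every $T\in[0,\epsilon]$. Two incompatible consequences follow. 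First, $|\dot A_\epsilon|$ is uniformly bounded, so $A_\epsilon(s)$ grows at most linearly in $|s|$. Second, Cauchy--Schwarz in $t$ gives $\epsilon^2\le A_\epsilon(s)\int_0^\epsilon K_{\sigma_t}(s)\,dt$, whence $\int_{\R}1/A_\epsilon\le\pi/\epsilon<\infty$. But a positive function with at most linear growth has non-integrable reciprocal, and that is the contradiction. Your intuition that ``speed blows up at the ends'' is pointing the right way; the quantitative vehicle---time-integrating the inverse curvature and playing linear growth of $A_\epsilon$ against integrability of $1/A_\epsilon$---is the idea you are missing.
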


Proposition \ref{p:open} does not hold in higher dimension: Daskalopoulos-Huisken \cite{Daskalopoulos:2022aa} proved a global existence result for the inverse mean curvature flow in $\R^{n+1}$, with $n\ge 2$, starting from a strictly mean convex starshaped entire graph with superlinear growth at infinity.

\subsection{Rescaled solutions}\label{ss:rescaled_PDE}
Let $\gamma_t$ be a solution of the inverse curvature flow defined for all $t$ in some interval containing 0. 
 We assume without loss of generality that each $\gamma_t$ is parametrized with constant speed. Clearly, the image of $\gamma_t$ under a homothety is still a solution of the inverse curvature flow. Therefore, up to rescaling $\gamma_0$ by scalar multiplication with $\|\dot\gamma_0\|^{-1}$, we can assume without loss of generality that $\|\dot\gamma_0\|\equiv1$, and Lemma~\ref{l:evolution_speed} implies
$\|\dot\gamma_t\|\equiv e^t$.
We define the rescaled curves
$\sigma_t:=e^{-t}\gamma_t$, which are parametrized with unit speed $\|\dot\sigma_t\|\equiv1$.

\begin{Lemma}
The curves $\sigma_t$ satisfy the PDE
\begin{align*}
\partial_t\sigma_t=-\sigma_t+\frac{N_{\sigma_t}}{K_{\sigma_t}}.
\end{align*}
The associated functions 
$a_t(s):=K_{\sigma_t}(s)^{-1}$
satisfy the PDE
\begin{align*}
\partial_t a_t=a_t^2 \ddot a_t.
\end{align*}
\end{Lemma}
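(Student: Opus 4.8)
The plan is to establish the two identities in turn, the second following from the first by a short Frenet-frame computation; beyond the definitions, only Lemma~\ref{l:evolution_speed} is used.

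\textbf{First identity.} I would begin by recording how the geometric data transform under a homothety: if $\sigma=\lambda\gamma$ with $\lambda>0$ a constant, then $T_\sigma=T_\gamma$ (literally the same function of $s$), hence $N_\sigma=-\JJ T_\sigma=N_\gamma$, and from the defining formula for the curvature $K_\sigma=\lambda^{-1}K_\gamma$. Applying this at each fixed $t$ with $\lambda=e^{-t}$ yields $N_{\sigma_t}=N_{\gamma_t}$ and $K_{\sigma_t}=e^{t}K_{\gamma_t}$, so that
\begin{align*}
\partial_t\sigma_t
&=-e^{-t}\gamma_t+e^{-t}\partial_t\gamma_t\\
&=-\sigma_t+e^{-t}\,\frac{N_{\gamma_t}}{K_{\gamma_t}}
=-\sigma_t+\frac{N_{\sigma_t}}{K_{\sigma_t}}.
\end{align*}

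\textbf{Frenet setup.} Recall that $\|\dot\sigma_t\|\equiv1$, so each $\sigma_t$ is arc-length parametrised, uniformly in $t$, and $\partial_s$ and $\partial_t$ commute on the smooth map $(t,s)\mapsto\sigma_t(s)$. Abbreviate $T=T_{\sigma_t}$, $N=N_{\sigma_t}$, $K=K_{\sigma_t}$, $a=a_t=K^{-1}$, with $\dot{}=\partial_s$ as elsewhere. The unit-speed Frenet equations read $\dot T=-KN$ (since $\dot T\perp T$ and $K=-\langle\dot T,N\rangle$) and $\dot N=KT$ (from $N=-\JJ T$ and $\JJ N=T$). Differentiating the first identity in $s$ and using that $T=\partial_s\sigma_t$ and $aK\equiv1$,
\begin{align*}
\partial_t T=\partial_s(\partial_t\sigma_t)=\partial_s\big(-\sigma_t+aN\big)=-T+\dot a\,N+aKT=\dot a\,N,
\end{align*}
and hence $\partial_t N=-\JJ\,\partial_t T=-\dot a\,\JJ N=-\dot a\,T$.

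\textbf{Curvature evolution and conclusion.} Differentiating $K=-\langle\dot T,N\rangle$ in $t$, using the identities just obtained together with $\langle T,N\rangle=0$ and the commutation of $\partial_s$ and $\partial_t$,
\begin{align*}
\partial_t K
&=-\langle\partial_s(\partial_t T),N\rangle-\langle\dot T,\partial_t N\rangle\\
&=-\langle\ddot a\,N+\dot a\,KT,\,N\rangle-\langle-KN,\,-\dot a\,T\rangle
=-\ddot a,
\end{align*}
and therefore $\partial_t a=-K^{-2}\partial_t K=a^2\ddot a=a_t^2\ddot a_t$, as claimed. The step that demands the most care is the evaluation of $\partial_t K$: one must keep track of the moving orthonormal frame $\{T_{\sigma_t},N_{\sigma_t}\}$, of the signs in the Frenet equations, and of the fact that arc-length parametrisation persists in $t$ so that $\dot{}$ commutes with $\partial_t$ throughout; granting these, the computation is routine and no further geometric input is needed.
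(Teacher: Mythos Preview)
Your proof is correct and follows essentially the same approach as the paper. The first identity is proved identically; for the second, the paper equates two expressions for $\partial_t\ddot\sigma_t=\partial_s^2\partial_t\sigma_t$ and reads off the normal component, whereas you first isolate the frame evolution $\partial_t T=\dot a\,N$, $\partial_t N=-\dot a\,T$ and then differentiate $K=-\langle\dot T,N\rangle$---a minor reorganisation of the same Frenet computation.
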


\begin{proof}
Notice that $\dot\sigma_t=T_{\gamma_t}$, $N_{\sigma_t}=N_{\gamma_t}$, and  $K_{\sigma_t}=e^t K_{\gamma_t}$. This, together with the usual relations $\ddot\sigma_t=-K_{\sigma_t}N_{\sigma_t}$ and $\dot N_{\sigma_t}=K_{\sigma_t}\dot\sigma_t$, implies
\begin{align*}
 \partial_t\sigma_t 
 = 
 - e^{-t}\gamma_t +  \frac{N_{\gamma_t}}{e^t K_{\gamma_t}}
 =
 - \sigma_t + \frac{N_{\sigma_t}}{K_{\sigma_t}}
 .
\end{align*}
We compute
\begin{align*}
 \partial_s^2\partial_t\sigma_t
 &=
 \partial_s^2 \bigg(- \sigma_t + \frac{N_{\sigma_t}}{K_{\sigma_t}}\bigg)
 =
 \partial_s \bigg(\underbrace{- \dot\sigma_t + \frac{\dot N_{\sigma_t}}{K_{\sigma_t}}}_{=0} + \dot a_t N_{\sigma_t}\bigg)\\
 &= \ddot a_t N_{\sigma_t} + \dot a_t \dot N_{\sigma_t} 
 = \ddot a_t N_{\sigma_t} + \frac{\dot a_t}{a_t} \dot\sigma_t .
\end{align*}
Switching the order of the derivatives on the left-hand side, we obtain
\begin{align*}
 \partial_t\ddot\sigma_t
 &=
 -\partial_t (K_{\sigma_t}N_{\sigma_t})
 =
 -(\partial_t K_{\sigma_t})N_{\sigma_t} - K_{\sigma_t}(\partial_t N_{\sigma_t})\\
 & =
 \frac{\partial_t a_t}{a_t^2}N_{\sigma_t} - K_{\sigma_t}(\partial_t N_{\sigma_t}).
\end{align*}
Notice that $\langle\partial_tN_{\sigma_t},N_{\sigma_t}\rangle=\tfrac12 \partial_t \|N_{\sigma_t}\|^2 =0$. Therefore,
\begin{align*}
 \ddot a_t & = \langle \partial_s^2\partial_t\sigma_t , N_{\sigma_t} \rangle
 = \langle \partial_t\ddot\sigma_t , N_{\sigma_t} \rangle
 =
 \frac{\partial_t a_t}{a_t^2}.
 \qedhere
\end{align*}
\end{proof}

\begin{Lemma}\label{l:circular_a}
If $\gamma_t$ is periodic, then it is a circular solution of the inverse curvature flow $($Example~\ref{ex:circular_solution}$)$ if and only if $\dot a_t\equiv 0$ for all values of $t$.
\end{Lemma}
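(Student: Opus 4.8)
The plan is to prove both implications of Lemma~\ref{l:circular_a} by relating the condition $\dot a_t \equiv 0$ to the geometry of the rescaled curves $\sigma_t = e^{-t}\gamma_t$, which by the previous lemma are parametrized with unit speed and have curvature function $a_t(s)^{-1} = K_{\sigma_t}(s)$.

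\textbf{The forward implication.} Suppose $\gamma_t$ is a circular solution as in Example~\ref{ex:circular_solution}, say $\gamma_t(s) = q_0 + e^t e^{vs\JJ} q_1$. Then each $\gamma_t$ is a circle of radius $e^t\|q_1\|$, so each $\sigma_t = e^{-t}\gamma_t$ is (a constant-speed parametrization of) a circle of radius $\|q_1\|$, independent of $t$. Hence $K_{\sigma_t}$ is the constant $\pm\|q_1\|^{-1}$ (with sign dictated by the orientation convention) for every $t$ and every $s$, so $a_t = K_{\sigma_t}^{-1}$ is a constant and in particular $\dot a_t \equiv 0$ for all $t$.

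\textbf{The reverse implication.} Suppose $\dot a_t \equiv 0$ for all $t$; equivalently, for each $t$ the function $a_t$ is constant in $s$, so each $\sigma_t$ has constant geodesic curvature $K_{\sigma_t} \equiv c_t$ for some constant $c_t \ne 0$ (nonvanishing since $K_{\gamma_t}$ is nowhere zero). A unit-speed plane curve with constant curvature $c_t$ is an arc of a circle of radius $|c_t|^{-1}$; since we are in the periodic setting, $\sigma_t : \Su \to \R^2$ is a full circle of radius $|c_t|^{-1}$, parametrized at unit speed, hence of the form $\sigma_t(s) = p_t + |c_t|^{-1} e^{\pm |c_t| s \JJ} w$ for some center $p_t \in \R^2$ and some unit vector $w$ (the sign and the fact that $w$ is $t$-independent follow because $\dot\sigma_t = T_{\gamma_t}$ evolves continuously and, at $t=0$, fixes the starting direction $\dot\sigma_0(0)$). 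Plugging this into the PDE $\partial_t \sigma_t = -\sigma_t + N_{\sigma_t}/K_{\sigma_t}$ established in the previous lemma: the right-hand side at the point $\sigma_t(s)$ is $-\sigma_t(s) + (\sigma_t(s) - p_t)$ up to sign, i.e. $-p_t$ if the circle is traversed so that $N_{\sigma_t}$ points toward the center — this shows $\partial_t \sigma_t(s)$ is independent of $s$, hence $p_t$ is the unique moving center, and gives $\dot p_t = -p_t$ (or an equivalent scalar ODE for the radius), whose solution is $p_t = e^{-t} q_0$ for a constant $q_0$ together with $|c_t|^{-1} = e^{-t}\|q_1\|$ for a constant $\|q_1\|$. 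Unwinding $\gamma_t = e^t \sigma_t$ then yields exactly $\gamma_t(s) = q_0 + e^t e^{vs\JJ} q_1$ with $v = |c_t|^{-1}\cdot(\pm 1)\cdot e^t \|q_1\|^{-1}$ constant, i.e.\ the circular solution.

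\textbf{Main obstacle.} The only delicate point is the bookkeeping in the reverse direction: knowing that each $\sigma_t$ is \emph{a} circle, one must check that the center $p_t$ and radius vary smoothly and coherently in $t$ (rather than, say, the parametrization slipping around the circle), and then extract the correct ODEs. This is handled cleanly by substituting the circular ansatz into the PDE $\partial_t\sigma_t = -\sigma_t + N_{\sigma_t}/K_{\sigma_t}$ and comparing the $s$-dependent and $s$-independent parts: the PDE forces $\partial_t\sigma_t$ to be constant in $s$, which pins down the center's motion and rules out any reparametrization drift. Everything else is a routine integration of scalar linear ODEs.
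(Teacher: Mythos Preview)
Your strategy is the same as the paper's: in the reverse direction, constant $a_t$ forces each curve to be a round circle, and substituting the circular ansatz into the flow PDE pins down the remaining parameters. The paper carries this out directly with $\gamma_t$ and the original PDE~\eqref{e:PDE}; you route through $\sigma_t$ and the rescaled PDE, which is equivalent but introduces $e^{\pm t}$ factors that you have tangled.

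The specific slip is the claim ``$|c_t|^{-1}=e^{-t}\|q_1\|$'': the radius of $\sigma_t$ is \emph{constant} in $t$, not exponentially decaying. This follows already from the previous lemma (since $\dot a_t\equiv0$ gives $\ddot a_t\equiv0$ and hence $\partial_t a_t=a_t^2\ddot a_t\equiv0$), or from your own observation that $\partial_t\sigma_t=-p_t$ is independent of $s$: differentiating $\sigma_t(s)=p_t+r_t e^{c_t s\JJ}w_t$ and matching the $s$-dependent part to zero forces $\dot c_t=0$ and $\partial_t(r_t w_t)=0$, so $r_t=|c_t|^{-1}$ is constant. With $r_t\equiv\|q_1\|$ and $p_t=e^{-t}q_0$, unwinding $\gamma_t=e^t\sigma_t$ gives $\gamma_t(s)=q_0+e^t e^{vs\JJ}q_1$ with $v=c_0$; your displayed formula for $v$ is also garbled. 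These are bookkeeping errors rather than a gap in the idea, and the paper's choice to work with $\gamma_t$ directly avoids them.
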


\begin{proof}
If $\gamma_t$ is a circular solution of the inverse curvature flow, for each value of $t$ the curvature $K_{\gamma_t}$ is constant, and therefore $\dot a_t\equiv 0$.
Conversely, if $\dot a_t\equiv0$ for all values of $t$ for which $\gamma_t$ is defined, then each $\gamma_t$ is a curve of constant curvature. Namely, each $\gamma_t$ is a $2\pi|v|^{-1}$-periodic parametrization with constant speed of a round circle of radius $e^tr>0$, and therefore has the form
\begin{align*}
 \gamma_t(s)=q_t + e^{t} e^{v s \JJ} p,
\end{align*}
for some $q_t\in\R^2$, $p\in\R^2$ with $\|p\|=r>0$, and $v\in\R\setminus\{0\}$. The inverse curvature flow PDE \eqref{e:PDE} can be rewritten as
\begin{align*}
 \partial_t q_t + e^{t} e^{v s \JJ} p = e^{t} e^{v s \JJ} p,
\end{align*}
which implies $\partial_t q_t\equiv0$. Therefore $q_t=q_0$ for all $t$, and $\gamma_t$ is a circular solution of the inverse curvature flow.
\end{proof}

\subsection{Proofs of the propositions}\label{ss:proofs_propositions}
In this subsection we carry out the proofs of the statements given in Section~\ref{ss:PDE}.

\begin{proof}[Proof of Proposition~\ref{p:periodic}]
Let $\gamma_t:\Su\looparrowright\R^2$ be a periodic solution of the inverse curvature flow defined for all $t\leq0$. Up to rescaling and reparametrization, we can assume that $K_{\gamma_0}>0$ and $\|\dot\gamma_0\|\equiv1$, so that we are in the setting of Section~\ref{ss:rescaled_PDE}. We wish to prove that that $\gamma_t$ is a circular solution of the inverse curvature flow, or equivalently that the function $a_t:\Su\to(0,+\infty)$ is constant for all $t\leq0$ (Lemma~\ref{l:circular_a}). We proceed by contradiction, assuming that
\begin{align}
\label{e:contradiction_periodic}
 \dot a_0\not\equiv0.
\end{align}
Since
\begin{align*}
 \frac{d}{dt} \int_{\Su} \frac1{a_t(s)} ds
 =
 -
 \int_{\Su} \frac{\partial_t a_t(s)}{a_t(s)^2} ds
 =
 -
 \int_{\Su} \ddot a_t(s)\,ds
 =
 0,
\end{align*}
we have a constant value
\begin{align*}
 c_0:=\int_{\Su} \frac1{a_t(s)} ds\leq \frac{2\pi}{\displaystyle\min_{s\in\Su}a_t(s)},\qquad\forall t\leq0.
\end{align*}
In particular
\begin{align}
\label{e:c_1}
c_1:=\sup_{t\leq0} \min_{s\in\Su}a_t(s)<+\infty.
\end{align}
We define the smooth function 
\begin{align*}
 b:(-\infty,0]\to\R,
 \qquad
 b(t):=\int_{\Su} \ln(a_t(s))\,ds.
\end{align*}
Its derivative is non-positive, for
\begin{align*}
\dot b(t) 
= 
\int_{\Su} \frac{\partial_ta_t(s)}{a_t(s)}\,ds
= 
\int_{\Su} a_t(s)\,\ddot a_t(s)\,ds
=
-\int_{\Su} \dot a_t(s)^2ds
\leq0.
\end{align*}
By~\eqref{e:contradiction_periodic}, we have $\dot b(0)<0$. Moreover
\begin{align*}
\ddot b(t)
&=
-\int_{\Su} 2\,\dot a_t(s)\, \partial_t\dot a_t(s)\,ds
=
-\int_{\Su} 2\,\dot a_t(s)\, \partial_s \big(a_t(s)^2\ddot a_t(s)\big)\,ds \\
&=
\int_{\Su} 2\,\ddot a_t(s)^2 a_t(s)^2\,ds
\geq
0.
\end{align*}
Therefore $b$ is a convex function with negative derivative at the origin, and therefore 
\begin{align}
\label{e:b_diverges}
 \lim_{t\to-\infty} b(t)=+\infty.
\end{align}
The inequality 
\begin{align*}
 b(t)\leq2\pi \ln\Big(\max_{s\in\Su} a_t(s)\Big),
\end{align*}
together with~\eqref{e:c_1}, implies that 
\begin{align*}
 \max_{s\in\Su} a_t(s)
 -
 \min_{s\in\Su} a_t(s)
 \geq
 e^{b(t)/2\pi} - c_1.
\end{align*}
By~\eqref{e:b_diverges}, there exists $t_0<0$ such that 
\begin{align*}
 \max_{s\in\Su} a_t(s)
 -
 \min_{s\in\Su} a_t(s)
 \geq
 \frac{e^{b(t)/2\pi}}2,
 \qquad\forall t\leq t_0.
\end{align*}
This, together with the opposite estimate
\begin{align*}
 \max_{s\in\Su} a_t(s)
 -
 \min_{s\in\Su} a_t(s)
 \leq
 \int_{\Su} |\dot a_t(s)|\,ds
 \leq
 \sqrt{2\pi} 
 \left(\int_{\Su} \dot a_t(s)^2\,ds\right)^{1/2}
 =\sqrt{-2\pi\dot b(t)},
\end{align*}
implies that
\begin{align*}
 \dot b(t) \leq -\frac{e^{b(t)/\pi}}{8\pi},
 \qquad\forall t\leq t_0.
\end{align*}
This differential inequality cannot be satisfied for large negative values of $t$. Indeed, the positive function $f(t):=e^{-b(t)/\pi}$ has derivative 
\[\dot f(t)=-\frac{e^{-b(t)/\pi}\dot b(t)}\pi\geq\frac{1}{8\pi^2},\qquad\forall t\leq t_0.\]
But this implies that $f(t)<0$ for large negative values of $t$, which gives a contradiction.
\end{proof}

The proof of Proposition~\ref{p:open} requires the following elementary property of embedded open curves in the plane.

\begin{Lemma}\label{l:curvature_plane_lines}
Let $\gamma:\R\hookrightarrow\R^2$ be an embedding parametrized with unit speed $\|\dot\gamma\|\equiv1$ and with positive curvature $K_\gamma>0$, and consider the integral curvature
\begin{align*}
\Delta_\gamma:=\int_{-\infty}^{+\infty} K_\gamma(s)\,ds.
\end{align*}
Then one of the following two points holds.
\begin{itemize}

\item[(i)] $\Delta_\gamma\leq\pi$, and the embedding $\gamma:\R\hookrightarrow\R^2$ is proper.

\item[(ii)] $\Delta_\gamma=+\infty$, and at least one of the half curves $\gamma|_{(-\infty,0]}$ or $\gamma|_{[0,+\infty)}$ is bounded.

\end{itemize}
\end{Lemma}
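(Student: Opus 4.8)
The plan is to reformulate everything via the turning angle. Since $\gamma$ has unit speed and $K_\gamma>0$, there is a smooth strictly increasing function $\theta\colon\R\to\R$ with $\dot\theta=K_\gamma$ and $\dot\gamma(s)=(\cos\theta(s),\sin\theta(s))$; by monotonicity the limits $A:=\lim_{s\to-\infty}\theta(s)\in[-\infty,+\infty)$ and $B:=\lim_{s\to+\infty}\theta(s)\in(-\infty,+\infty]$ exist, and $\Delta_\gamma=B-A$. The first, easy observation uses no embeddedness: if a half-curve has finite total curvature then its tangent converges at the corresponding end, and that half-curve escapes to infinity there. Indeed, if $B<+\infty$ then $\langle\dot\gamma(s),(\cos B,\sin B)\rangle=\cos(\theta(s)-B)\to1$ as $s\to+\infty$, so $\langle\gamma(s),(\cos B,\sin B)\rangle\to+\infty$, and symmetrically at the other end when $A>-\infty$. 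Consequently: $\Delta_\gamma<+\infty$ implies that $\gamma$ is proper; and if a half-curve is bounded then it has infinite total curvature, hence $\Delta_\gamma=+\infty$. In particular $\Delta_\gamma\le\pi$ already gives case (i), so it remains only to show that $\Delta_\gamma>\pi$ together with embeddedness forces one of the two half-curves to be bounded; by the previous sentence this then places us in case (ii).

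So suppose $\Delta_\gamma>\pi$ and, for contradiction, that both half-curves are unbounded; I will produce a self-intersection. Choose $a<b$ with $\theta(b)-\theta(a)=\pi$ in the interior of the range of $\theta$, leaving room on each side — possible precisely because $\Delta_\gamma>\pi$, and, as an elementary bound on the available choices shows, it can be arranged, \emph{provided $\Delta_\gamma<3\pi$}, that after a rotation one has $\theta(a)=-\pi/2$, $\theta(b)=\pi/2$, and the whole range of $\theta$ lies in $(-3\pi/2,3\pi/2)$. Assume this rotation is made and, after a translation, that $\gamma(a)$ lies on the vertical axis. On $[a,b]$ the tangent direction stays in $[-\pi/2,\pi/2]$, so the first coordinate $x$ of $\gamma$ is strictly increasing there and the middle arc $\gamma|_{[a,b]}$ is the graph of a function $\psi$ over an interval $[x_a,x_b]$ with $x_a<x_b$. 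Along the tail $\gamma|_{(-\infty,a]}$ the tangent direction stays in a sector on which $\cos$ has constant negative sign, and likewise along $\gamma|_{[b,+\infty)}$; hence $x$ is strictly monotone along each tail, and since both tails are unbounded this forces $|x|\to+\infty$ along each. Thus the tail before $a$ covers the $x$-interval $[x_a,+\infty)$ and the tail after $b$ covers $(-\infty,x_b]$; restricting to $[x_a,x_b]$ we obtain graphs of functions $\phi_-$ and $\phi_+$ over $[x_a,x_b]$, which are sub-arcs of the respective tails.

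The crossing argument then finishes it. At $\gamma(a)$ the curve has vertical tangent and $K_\gamma>0$, so near $s=a$ it stays in $\{x\ge x_a\}$, with the forward branch dipping below and the backward branch rising above the height of $\gamma(a)$; hence $\phi_->\psi$ for $x$ slightly above $x_a$, and symmetrically $\phi_+>\psi$ for $x$ slightly below $x_b$. By injectivity, the graph of $\psi$ (which is $\gamma|_{[a,b]}$) meets that of $\phi_-$ only over $x_a$ and that of $\phi_+$ only over $x_b$, so $\phi_->\psi$ on all of $(x_a,x_b]$ and $\phi_+>\psi$ on all of $[x_a,x_b)$. Evaluating at the far endpoints, $\phi_-(x_b)>\psi(x_b)=\phi_+(x_b)$ and $\phi_+(x_a)>\psi(x_a)=\phi_-(x_a)$, so $\phi_--\phi_+$ changes sign on $[x_a,x_b]$; by the intermediate value theorem $\phi_-$ and $\phi_+$ agree at an interior point, i.e.\ two disjoint sub-arcs of $\gamma$ meet — contradicting embeddedness. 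This disposes of the range $\pi<\Delta_\gamma<3\pi$ (in particular it rules out $\pi<\Delta_\gamma<+\infty$ there, giving the sharp bound $\Delta_\gamma\le\pi$ in case (i)). For $\Delta_\gamma\ge3\pi$, and especially for $\Delta_\gamma=+\infty$, the same principle has to be combined with a priori control of how far the tails wander: either by first isolating a sub-arc on which $\theta$ turns by a controlled amount and still extracting an intersection, or by arguing via the (necessarily unbounded) convex hull of $\gamma$ — whose boundary has total turning at most $\pi$ — or by noting that such a large total turning makes $\gamma$ wind infinitely so that, by injectivity, the nested windings must converge and a half-curve is bounded. Making this last regime rigorous is the main obstacle; the rest is routine bookkeeping with the turning angle.
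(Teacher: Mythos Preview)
Your setup via the turning angle and the properness claim for finite $\Delta_\gamma$ match the paper exactly. Your treatment of the regime $\pi<\Delta_\gamma<3\pi$ is correct and rather different in flavor from the paper's: you normalize the whole range of $\theta$ into $(-3\pi/2,3\pi/2)$, write the three arcs as graphs over the $x$-axis, and force a crossing by the intermediate value theorem. This is fine, but it genuinely depends on having global control of $\theta$, which is why you get stuck at $\Delta_\gamma\ge 3\pi$.

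That is a real gap, and you acknowledge it. The case $\Delta_\gamma\ge 3\pi$ (and in particular $\Delta_\gamma=+\infty$) is not a technicality to be filled in later: your graph argument uses that the tails have monotone $x$-coordinate, which needs $\theta$ to stay in a half-circle along each tail, and that simply fails once the total turning exceeds $3\pi$. The three heuristics you list at the end are not proofs; in particular ``infinite winding forces convergence'' is false without further structure.

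The paper avoids the case split entirely by a local rather than global trap. It chooses $s_1<s_2$ with $\theta(s_2)-\theta(s_1)\in(\pi,2\pi)$---always possible once $\Delta_\gamma>\pi$, regardless of how large $\Delta_\gamma$ is. The tangent lines $\ell_1,\ell_2$ at $\gamma(s_1),\gamma(s_2)$ are then transverse and, together with $\gamma([s_1,s_2])$, bound a compact region $C$. Positive curvature forces each tail, if it leaves $C$ at all, to first hit the \emph{opposite} tangent line; if both tails did this, the two exit arcs would have to cross, contradicting embeddedness. Hence at least one tail stays in $C$ and is bounded, which forces $\Delta_\gamma=+\infty$. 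This argument uses only a finite sub-arc and never needs to control the full range of $\theta$, which is exactly the idea your attempt is missing.
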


\begin{proof}
We identify $\R^2$ with the complex plane $\C$, and  write the velocity vectors as $\dot\gamma(s)=e^{i\theta(s)}$ for some smooth functions $\theta:\R\to\R$, so that $K_{\gamma}(s)=\dot\theta(s)$. Since $K_{\gamma}$ is positive, we have limits 
\begin{align*}
 \theta_\pm:=\lim_{s\to\pm\infty}\theta(s),
\end{align*}
and $\Delta_\gamma=\theta_+-\theta_-$.
If $\theta_+$ is finite, we have
\begin{align*}
 \lim_{s\to+\infty} \frac{\|\gamma(s)-se^{i\theta_+}\|}s=0,
\end{align*} 
and in particular  $\|\gamma(s)\|$ diverges as $s\to+\infty$. 
Analogously, if $\theta_-$ is finite, then $\|\gamma(s)\|$ diverges as $s\to-\infty$. 
If $\Delta_\gamma\leq\pi$, then both $\theta_+$ and $\theta_-$ are finite, and therefore $\gamma:\R\hookrightarrow\R^2$ is proper.

Assume now that $\Delta_\gamma>\pi$, so that we can find $s_1<s_2$ such that $\theta(s_2)-\theta(s_1)\in(\pi,2\pi)$. The lines $\ell_1:=\gamma(s_1)+\R\dot\gamma(s_1)$ and $\ell_2:=\gamma(s_2)+\R\dot\gamma(s_2)$ are not parallel, and therefore intersect. 
Consider the compact set $C\subset\R^2$ bounded by $\gamma([s_1,s_2])$ together with portions of the lines $\ell_1$ and $\ell_2$ (Figure~\ref{f:incident}(a)). If $\gamma((-\infty,s_1])\subset C$ then $\theta_-=-\infty$; if instead $\gamma((-\infty,s_1])$ exits $C$, the positivity of the curvature $K_\gamma>0$ implies that there exists $s_0<s_1$ such that $\gamma([s_0,s_1])\subset C$ and $\gamma(s_0)\in\ell_2$ (Figure~\ref{f:incident}(b)). Analogous properties hold for the other end of the curve: if $\gamma([s_2,+\infty))\subset C$ then $\theta_+=+\infty$; if instead $\gamma([s_2,+\infty))$ exits $C$, then there exists $s_3>s_2$ such that $\gamma([s_2,s_3])\subset C$ and $\gamma(s_3)\in\ell_1$. This readily implies that $\gamma((-\infty,s_1])$ and $\gamma([s_2,+\infty))$ cannot both exit $C$, for otherwise $\gamma$ would have a self-intersection. Hence at least one of the inclusions $\gamma((-\infty,s_1])\subset C$ and $\gamma([s_2,+\infty))\subset C$ must hold, and $\Delta_\gamma=+\infty$.
\end{proof}

\begin{figure}
\begin{scriptsize}
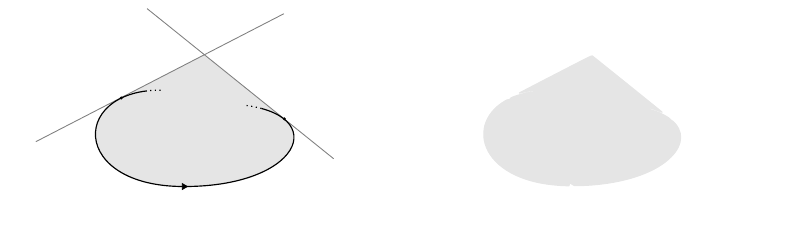
\end{scriptsize}
\caption{}
\label{f:incident}
\end{figure}

\begin{proof}[Proof of Proposition~\ref{p:open}]
Assume by contradiction that there exists a family of  curves $\gamma_t$, for $t\in[0,\epsilon]$, evolving according to the inverse curvature flow, such that $\gamma_{0}:\R\hookrightarrow\R^2$ is a proper embedding. Up to rescaling and reparametrization, we can assume that each $\gamma_t$ has positive curvature $K_{\gamma_t}>0$ and that $\gamma_0$ has unit speed $\|\dot\gamma_0\|\equiv1$, so that once again we are in the setting of Section~\ref{ss:rescaled_PDE}. Notice that the associated curves $\sigma_t=e^{-t}\gamma_t$ have also  positive curvature $K_{\sigma_t}>0$. By Lemma~\ref{l:curvature_plane_lines}, we have 
\begin{align}
\label{e:integral_curvature_bound}
 \int_{-\infty}^{+\infty} K_{\sigma_0}(s)\leq\pi.
\end{align}
In order to derive a contradiction, let us consider the inverse curvature function $a_t(s)=K_{\sigma_t}(s)^{-1}$, as in Section~\ref{ss:rescaled_PDE}, and define, for $T \in [0,\epsilon]$,
\begin{align*}
 A_T(s):=\int_0^T a_t(s)\,dt.
\end{align*}
Let us compute the derivative 
\begin{align}  \label{derAT}
 \dot A_T(S)
 &
 =
 \dot A_T(0) + \int_0^S \ddot A_T(s)\,ds
 =
 \dot A_T(0) + \int_0^S \!\!\int_0^T \ddot a_t(s)\,dt\,ds  \nonumber \\
 &=
 \dot A_T(0) + \int_0^S\!\!\int_0^T \frac{\partial_ta_t(s)}{a_t(s)^2}\,dt\,ds
 =
 \dot A_T(0) - \int_0^S\!\!\int_0^T \partial_t (a_t(s)^{-1})\,dt\,ds  \nonumber \\
 &=
 \dot A_T(0) - \int_0^S \big( a_T(s)^{-1}-a_0(s)^{-1} \big) \,ds  \nonumber \\
&=
 \dot A_T(0) + \int_0^S K_{\sigma_0}(s)\,ds - \int_0^S K_{\sigma_T}(s)\,ds.
\end{align}
Hence
\[
 \dot A_T(S)- \dot A_T(-S)=\int_{-S}^S K_{\sigma_0}(s)\,ds-
  \int_{-S}^S K_{\sigma_T}(s)\,ds.
\]
We infer 
\[
\liminf_{S \to +\infty} \int_{-S}^S K_{\sigma_T}(s)\,ds =
\liminf_{S \to +\infty} \Big( \int_{-S}^S K_{\sigma_0}(s)\,ds+\dot A_T(-S)  - \dot A_T(S)\Big) 
 \leq \pi .
\]
The latter inequality is a consequence of  \eqref{e:integral_curvature_bound} and
 of the fact that,  by the positivity of the map $S \mapsto A_T(S)+A_T(-S)$, 
 $\liminf_{S \to +\infty}  - \dot A_T(S)+ \dot A_T(-S) \leq 0$. 
Hence
\begin{equation}  \label{ICbound2}
\int_{-\infty}^{+\infty} K_{\sigma_T}(s)\,ds \leq \pi,
\qquad
\forall T \in [0,\epsilon].
\end{equation}
We  claim that $1/A_\epsilon$ is an $L^1$-function, i.e.
\begin{align}
\label{e:integrability_A}
 \int_{-\infty}^{+\infty} \frac1{A_\epsilon (s)}\,ds<+\infty .
\end{align}
Indeed, we have
\begin{align*}
\epsilon
&=
\int_0^\epsilon 
\frac{\sqrt{a_t(s)}}{\sqrt{a_t(s)}} \,dt 
\leq 
\bigg(
\int_0^\epsilon a_t(s)\,dt 
\bigg)^{1/2}
\bigg( 
\int_0^\epsilon \frac1{a_t(s)}\,dt 
\bigg)^{1/2}\\
&=
\sqrt{A_\epsilon(s)}
\bigg( 
\int_0^\epsilon \frac1{a_t(s)}\,dt 
\bigg)^{1/2},
\end{align*}
and therefore, by~\eqref{ICbound2},
\begin{align*}
 \int_{-\infty}^{+\infty} \frac1{A_\epsilon(s)}\,ds
 \leq
 \frac1{\epsilon^2}
 \int_0^\epsilon \!\!\int_{-\infty}^{+\infty} \frac1{a_t(s)}\,ds\,dt
 \leq
 \frac\pi \epsilon.
\end{align*}

Now \eqref{derAT}  together with~\eqref{ICbound2} and the fact 
that $K_{\sigma_T} (s) >0$, implies that 
\[\sup_{s\in\R} |\dot A_\epsilon(s)| \le|\dot A_\epsilon(0)|+\pi.\] 
Therefore, we have a linear bound 
\[A_\epsilon (s)\leq A_\epsilon (0) +   (|\dot A_\epsilon (0)|+\pi)|s|,\qquad\forall s\in \R,\]
which contradicts the integrability~\eqref{e:integrability_A}.
\end{proof}

\subsection{Evolution of the support functions}\label{ss:PDE_support}
Let $\gamma:\Su\hookrightarrow\R^2$ be an embedded smooth curve with positive curvature, which parametrizes the smooth boundary of a convex compact subset $C_\gamma\subset\R^2$. Let us recall the classical notion of support function of a convex body: for $C_\gamma\subset\R^2$, it is the smooth function
\begin{align*}
h_\gamma:\Su\to\R,
\qquad
h_\gamma(s)=\max_{q\in C_\gamma}\, \langle q,u(s)\rangle,
\end{align*}
where $u(s)=(\cos(s),\sin(s))$,
and it satisfies $h_\gamma+\ddot h_\gamma>0$. Conversely, any smooth function $h:\Su\to\R$ such that $h+\ddot h>0$ is the support function $h=h_\gamma$ of a unique (up to reparametrization) embedded smooth curve $\gamma:\Su\hookrightarrow\R^2$ with positive curvature, given by
\begin{align*}
 \gamma(s)=h_\gamma(s)u(s)+\dot h_\gamma(s)J u(s).
\end{align*}
The velocity vector of $\gamma$ can be expressed as
\begin{align*}
 \dot\gamma=\dot h_\gamma u+ h_\gamma J u +\ddot h_\gamma Ju + \dot h_\gamma JJu = (h_\gamma +\ddot h_\gamma )J u,
\end{align*}
and its curvature as
\begin{align}
\label{e:curvature_formula}
 K_\gamma(s)=
 \frac{\langle\dot\gamma,\dot N_{\gamma}\rangle}{\|\dot\gamma\|^2}
 =
 \frac{1}{\|\dot\gamma\|}
 =
 \frac{1}{h_\gamma +\ddot h_\gamma}.
\end{align}

The family of support functions of periodic curves evolving with the inverse curvature flow is described by the following well known statement, which is a particular case of a result of Tso \cite{Tso:1985aa,Urbas:1991aa,Chow:1996aa}. We provide the short proof here for the reader's convenience.

\begin{Lemma}\label{l:PDE_support}
Let $I$ be an interval with non-empty interior. 
A smooth family of functions $h_t:\Su\to\R^2$, $t\in I$, satisfies
\begin{align}
\label{e:PDE_support}
 \partial_t h_t = h_t+\ddot h_t>0
\end{align}
if and only if $h_t=h_{\gamma_t}$ for a suitably parametrized smooth family of embedded periodic curves with positive curvature $\gamma_t:\Su\hookrightarrow\R^2$ that satisfies the inverse curvature PDE \eqref{e:PDE}. 
\end{Lemma}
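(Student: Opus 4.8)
The plan is to verify the equivalence by direct computation in both directions, using the support-function formalism established just above the statement.

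\textbf{From the flow to the PDE for support functions.} Suppose $\gamma_t:\Su\hookrightarrow\R^2$ is a smooth family of embedded periodic curves with positive curvature satisfying \eqref{e:PDE}. Since each $\gamma_t$ bounds a convex body, it has a support function $h_t:=h_{\gamma_t}$, and by \eqref{e:curvature_formula} the curvature is $K_{\gamma_t}=1/(h_t+\ddot h_t)$. I would first recall that the velocity of the normal is $N_{\gamma_t}=Ju$ along the curve parametrized by the angle $s$ of the outer normal (or rather $-Ju$ with the paper's sign conventions: $N_\gamma=-JT_\gamma$, and $T_\gamma=Ju$ up to the orientation coming from $\dot\gamma=(h_t+\ddot h_t)Ju$, so $N_\gamma=-JJu=u$ pointing outward — I need to fix this sign carefully, but morally $N_{\gamma_t}$ is $\pm u(s)$). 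Then $\partial_t\gamma_t=N_{\gamma_t}/K_{\gamma_t}=(h_t+\ddot h_t)u(s)$ up to sign. On the other hand, differentiating $\gamma_t=h_t u+\dot h_t Ju$ in $t$ gives $\partial_t\gamma_t=(\partial_t h_t)u+(\partial_t\dot h_t)Ju$. Taking the inner product with $u(s)$ — which is legitimate because $\{u(s),Ju(s)\}$ is an orthonormal frame — yields $\partial_t h_t=h_t+\ddot h_t$, which is \eqref{e:PDE_support}; positivity is automatic since $h_t+\ddot h_t=\|\dot\gamma_t\|>0$ for an immersed convex curve. A subtlety: the parametrization of $\gamma_t$ by the normal angle $s$ is $t$-independent, but the given solution of \eqref{e:PDE} need not be parametrized this way; here I would invoke that \eqref{e:PDE} is geometric (stated in the excerpt right after Example~\ref{ex:circular_solution}), so I may reparametrize each $\gamma_t$ by its normal angle without affecting the flow, and the reparametrization is $t$-independent because the Gauss map is a diffeomorphism onto $\Su$ for every convex curve.

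\textbf{From the PDE to the flow.} Conversely, given a smooth family $h_t:\Su\to\R$ with $h_t+\ddot h_t>0$ satisfying $\partial_t h_t=h_t+\ddot h_t$, define $\gamma_t(s):=h_t(s)u(s)+\dot h_t(s)Ju(s)$. By the converse part of the support-function correspondence recalled above, each $\gamma_t$ is (the normal-angle parametrization of) an embedded periodic curve with positive curvature $K_{\gamma_t}=1/(h_t+\ddot h_t)$, and $N_{\gamma_t}(s)=\pm u(s)$. Differentiating in $t$ and using $\partial_t h_t=h_t+\ddot h_t$ together with $\partial_t\dot h_t=\partial_s(\partial_t h_t)=\dot h_t+\dddot h_t$, I compute $\partial_t\gamma_t=(h_t+\ddot h_t)u(s)+(\dot h_t+\dddot h_t)Ju(s)$. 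The claim is that this equals $N_{\gamma_t}/K_{\gamma_t}=(h_t+\ddot h_t)u(s)$ up to the geometric (tangential) reparametrization freedom: indeed the extra term $(\dot h_t+\dddot h_t)Ju(s)$ is tangent to $\gamma_t$ (since $\dot\gamma_t$ is a positive multiple of $Ju(s)$), so $\partial_t\gamma_t$ agrees with $N_{\gamma_t}/K_{\gamma_t}$ modulo a time-dependent tangential vector field, which is exactly what it means to solve \eqref{e:PDE} after a $t$-dependent reparametrization of $\Su$; this is permissible since the statement only asks for a "suitably parametrized" family.

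\textbf{Main obstacle.} The computations are short and routine; the only genuine care needed is bookkeeping the sign and orientation conventions (the paper's $N_\gamma=-JT_\gamma$, the direction of $u(s)$ versus the outward normal, and the fact that $\dot\gamma=(h+\ddot h)Ju$ forces a particular orientation) so that $N_{\gamma_t}/K_{\gamma_t}$ comes out exactly as $(h_t+\ddot h_t)$ times the correct unit vector, with no stray minus sign. The second mild point is making precise the role of the tangential reparametrization: one should note that the normal (Gauss-map) parametrization is the one in which $\partial_t\gamma_t$ has no tangential component other than the prescribed one, and cite the geometric invariance of \eqref{e:PDE} to pass between this and the abstract flow. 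Both points are standard in the curvature-flow literature (Tso, Urbas, Chow, as cited), so I would present the forward direction in full and remark that the converse follows by reading the same identities backwards.
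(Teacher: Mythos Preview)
Your approach matches the paper's, and the computations are right. One correction: in the forward direction, the normal-angle reparametrization is \emph{not} $t$-independent---the Gauss map being a diffeomorphism for each fixed $t$ says nothing about how that diffeomorphism varies with $t$---so the reparametrized curves $\zeta_t:=\gamma_t\circ\theta_t$ satisfy $\partial_t\zeta_t=N_{\zeta_t}/K_{\zeta_t}$ only up to a tangential term. This is harmless, since $u(s)=N_{\zeta_t}(s)$ makes the inner product $\langle u,\partial_t\zeta_t\rangle$ blind to tangential corrections and you still get $\partial_t h_t=1/K_{\zeta_t}=h_t+\ddot h_t$; but the justification you give is incorrect. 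In the converse direction you correctly isolate the tangential remainder $(\dot h_t+\dddot h_t)Ju$; rather than resting on the phrase ``suitably parametrized,'' the paper explicitly builds the reparametrization by solving the ODE $\partial_t\theta_t=-K_{\zeta_t}(\theta_t)\,f_t(\theta_t)$ with $f_t:=\langle Ju,\partial_t\zeta_t\rangle$, and then verifies that $\gamma_t:=\zeta_t\circ\theta_t$ satisfies~\eqref{e:PDE} exactly. This construction is short enough that you should include it.
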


\begin{proof}
Let $\gamma_t:\Su\hookrightarrow\R^2$, $t\in I$, be a family of smooth embedded periodic curves of positive curvature evolving with the inverse curvature flow \eqref{e:PDE}. There is a unique smooth family of diffeomorphisms $\theta_t:\Su\to\Su$ such that the reparametrized curves $\zeta_t:=\gamma_t\circ \theta_t$ have negative normal vector field $N_{\zeta_t}(s)=u(s)$, and thus velocity vector $\dot\zeta_t=\|\dot\zeta_t\|Ju$. The associated support functions $h_t:\Su\to\R$, given by $h_t(s)=\langle u(s),\zeta_t(s) \rangle$, allow to write the curves as 
$\zeta_t=h_{t}u+\dot h_{t}J u$. Since the family of curves $\zeta_t : \Su\hookrightarrow\R^2$ satisfies \eqref{e:PDE}, 
by~\eqref{e:curvature_formula} we infer
\begin{align*}
 \partial_t h_t=\langle u,\partial_t\zeta_t \rangle = \frac{1}{K_{\zeta_t}} = \|\dot\zeta_t\| = h_t+\ddot h_t,
\end{align*}
and the last equality also implies that these terms are positive.

Conversely, assume that $h_t:\Su\to\R$, $t\in I$, satisfies \eqref{e:PDE_support}. Since $\ddot h_t+h_t>0$, the associated family of curves 
$\zeta_t:\Su\hookrightarrow\R^2$, $\zeta_t(s)=h_t(s)u(s)+\dot h_t(s)J u(s)$ has positive curvature $K_{\zeta_t}=(\ddot h_t+h_t)^{-1}=\|\dot\zeta_t\|^{-1}$, and we have
\begin{align*}
\partial_t\zeta_t
= 
\partial_t h_t\, u + \partial_t\dot h_t\, Ju
= 
(\ddot h_t + h_t)N_{\zeta_t} + \partial_t\langle Ju,\zeta_t\rangle J u
=
\frac{N_{\zeta_t}}{K_{\zeta_t}} + \underbrace{\langle Ju,\partial_t\zeta_t\rangle}_{=:f_t} T_{\zeta_t}.
\end{align*}
Let $\theta_t:\Su\to\Su$  be the smooth family of diffeomorphisms defined by $\theta_{t_0}=\id$ for an arbitrarily chosen $t_0\in I$, and satisfying the ODE
\begin{align*}
 \partial_t \theta_t(s) = - K_{\zeta_t}(\theta_t(s))\,f_t(\theta_t(s)).
\end{align*}
The smooth family of reparametrized curves $\gamma_t:=\zeta_t\circ\theta_t$ satisfies
\begin{align*}
\partial_t\gamma_t
& = 
\frac{N_{\gamma_t}}{K_{\gamma_t}} + (f_t\circ\theta_t) T_{\gamma_t} + (\dot\zeta_t\circ\theta_t)\,\partial_t\theta_t\\
& =   \frac{N_{\gamma_t}}{K_{\gamma_t}} + \bigg(f_t\circ\theta_t  + \frac{\partial_t\theta_t}{K_{\gamma_t}} \bigg)T_{\zeta_t}
= 
\frac{N_{\gamma_t}}{K_{\gamma_t}}. \qedhere
\end{align*}
\end{proof}

In Theorem~\ref{t:nonradial}, we will need to deal with a given general convex compact set $C\subset\R^2$, which potentially may have empty interior or non-smooth boundary. Its associated support function
\begin{align*}
 h:\Su\to\R,
 \qquad
 h(s)=\max_{q\in C}\, \langle q,u(s)\rangle
\end{align*}
is continuous, and while it is not necessarily smooth, nevertheless it still satisfies the inequality $h+\ddot h\geq0$ in the sense of distributions. Conversely, any continuous function $h:\Su\to\R$ satisfying the latter inequality in the sense of distributions is the support function of a unique convex compact subset $C\subset\R^2$. The evolution of any such support function by means of the PDE \eqref{e:PDE_support} has good regularizing properties, as asserted by the following lemma. The existence part of the statement was also proved in \cite{Choi:2023aa}.

\begin{Lemma}\label{l:PDE_support_2}
For each continuous function $h_0:\Su\to\R$ as initial condition, the PDE \eqref{e:PDE_support} admits a unique continuous solution in the sense of distributions $h:[0,+\infty)\times \Su\to\R$, $h(t,s)=h_t(s)$, which is smooth on $(0,+\infty)\times\Su$ and has the form
\begin{align}
\label{e:form_of_ht}
 h_t(s) = e^t z + b(t,s),
\end{align}
where $b:[0,+\infty)\times \Su\to\R$ is uniformly bounded.
Moreover, if $h_0+\ddot h_0\neq0$ and $h_0+\ddot h_0\geq0$ in the sense of distributions, then 
\[h_t+\ddot h_t>0,\qquad\forall t>0.\]
\end{Lemma}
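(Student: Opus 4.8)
The plan is to turn \eqref{e:PDE_support} into the ordinary heat equation on the circle through the substitution $h_t=e^tg_t$. Since $\partial_t(e^tg_t)=e^t(g_t+\partial_tg_t)$ while $e^tg_t+\partial_s^2(e^tg_t)=e^t(g_t+\ddot g_t)$, the function $h$ solves the equation $\partial_t h_t=h_t+\ddot h_t$ if and only if $g$ solves $\partial_t g_t=\ddot g_t$ on $\Su$ with $g_0=h_0$; the strict inequality in \eqref{e:PDE_support} is then a separate matter, treated last. Existence, uniqueness and interior smoothness come from the classical theory of the heat equation on a compact manifold: I would take $g_t:=p_t*h_0$, the convolution with the heat kernel $p_t$ of $\Su$, which is continuous on $[0,+\infty)\times\Su$, smooth on $(0,+\infty)\times\Su$, and a distributional solution; for uniqueness among continuous distributional solutions with a given initial datum, I would first invoke hypoellipticity of $\partial_t-\partial_s^2$ to upgrade such a solution to a smooth one on $(0,+\infty)\times\Su$, and then use the energy identity $\frac{d}{dt}\int_{\Su}(g_t-\tilde g_t)^2\,ds=-2\int_{\Su}(\dot g_t-\dot{\tilde g}_t)^2\,ds\le 0$ together with continuity down to $t=0$. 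The solution of \eqref{e:PDE_support} is then $h_t:=e^tg_t$.

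Next I would read off the form \eqref{e:form_of_ht}. The mean $z:=\tfrac1{2\pi}\int_{\Su}h_0\,ds$ is conserved by the heat flow, so in Fourier variables $g_t(s)-z=\sum_{n\ne0}\hat h_0(n)e^{-n^2 t}e^{ins}$, whence
\begin{align*}
 b(t,s):=h_t(s)-e^tz=e^t\big(g_t(s)-z\big)=\sum_{n\ne0}\hat h_0(n)\,e^{(1-n^2)t}\,e^{ins}.
\end{align*}
The crucial point is that the first nonzero eigenvalue of $-\partial_s^2$ on $\Su=\R/2\pi\Z$ equals $1$: the modes $n=\pm1$ contribute only the time-independent bounded term $\hat h_0(1)e^{is}+\hat h_0(-1)e^{-is}$, the modes $|n|\ge2$ decay, with $\sum_{|n|\ge2}|\hat h_0(n)|e^{(1-n^2)t}\le\|h_0\|_\infty\sum_{|n|\ge2}e^{1-n^2}$ for $t\ge1$, and on $[0,1]$ one has $\|g_t-z\|_\infty\le2\|h_0\|_\infty$ and $e^t\le e$. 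Combining these estimates shows $b$ is uniformly bounded on $[0,+\infty)\times\Su$.

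For the positivity claim, I would set $w_t:=h_t+\ddot h_t$, which by \eqref{e:PDE_support} equals $\partial_t h_t$. Differentiating \eqref{e:PDE_support} in $t$ and commuting $\partial_t$ with $\partial_s^2$ shows that $w$ solves the same PDE \eqref{e:PDE_support}; equivalently, $v_t:=e^{-t}w_t=g_t+\ddot g_t$ solves the heat equation on $\Su$ with initial datum $v_0=h_0+\ddot h_0$. Under the hypothesis, $v_0$ is a nonzero nonnegative distribution on $\Su$, hence a nonzero nonnegative Radon measure $\mu$. Since $p_t>0$ everywhere for $t>0$, this gives $v_t(s)=\int_{\Su}p_t(s-s')\,d\mu(s')>0$ for all $s$ and all $t>0$, and therefore $w_t=e^tv_t>0$ for $t>0$; in particular $h_t$ is then, for each $t>0$, the support function of a convex body with smooth boundary of positive curvature.

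The part I expect to be the real obstacle is the uniform boundedness of $b$: it works precisely because the exponential growth factor $e^t$ generated by the $+h_t$ term in \eqref{e:PDE_support} is exactly compensated by the spectral gap $1$ of $-\partial_s^2$ on $\R/2\pi\Z$, so the borderline Fourier modes $n=\pm1$ (constant in time) and the behaviour near $t=0$ have to be treated separately rather than absorbed into a single decay estimate. A secondary point that needs care is to remain consistent about the distributional setting throughout: well-posedness of the heat equation with merely continuous, respectively measure-valued, data, the interior smoothing, and the standard identification of a nonnegative distribution with a measure.
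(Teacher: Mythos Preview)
Your proof is correct and follows essentially the same route as the paper: the substitution $g_t=e^{-t}h_t$ reducing \eqref{e:PDE_support} to the heat equation on $\Su$, the heat-kernel convolution for existence/uniqueness/smoothing, the Fourier expansion to isolate the $e^tz$ term and bound the remainder, and the positivity of $h_t+\ddot h_t$ via convolution of the nonnegative nonzero distribution $h_0+\ddot h_0$ with the strictly positive heat kernel. Your treatment is in fact slightly more explicit than the paper's on two points---the uniform bound for $b$ (you separate the borderline modes $n=\pm1$ from $|n|\ge2$ and handle small $t$ by the maximum principle) and the uniqueness argument---but the underlying ideas are the same.
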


\begin{proof}
For each continuous function $h_0:\Su\to\R$ as initial condition, the family $h_t$, $t\geq0$, is a solution of~\eqref{e:PDE_support} in the sense of distributions if and only if the family $g_t:=e^{-t}h_t$ is a solution of the heat equation $\partial_tg_t=\ddot g_t$ in the sense of distributions. It is well known that the heat equation has the unique solution $g:[0,+\infty)\times \Su\to\R$, $g(t,s)=g_t(s)$, given by
\begin{align*}
 g_t(s)=h_0*k_t(s)=\int_{-\infty}^{+\infty} g_0(s-r)\,k_t(r)\,dr,
\end{align*}
where $k_t:\R\to\R$ is the heat kernel $k_t(r)=(4\pi t)^{-1/2}\exp(-r^2/4t)$. Therefore $g$ and likewise $h$ are everywhere continuous and smooth on $(0,+\infty)\times \Su$. If we express $h_0$ in Fourier series expansion as
\begin{align*}
 h_0(s)=z_0+\sum_{j\geq1} \langle z_j,u(js)\rangle,
\end{align*}
where $z_0\in\R$ and $z_j\in\R^2$ are the Fourier coefficients, we readily obtain that $g_t$ has the Fourier expansion
\begin{align*}
 g_t(s) = z_0 + \sum_{j\geq1} e^{-j^2t} \langle z_j,u(js)\rangle.
\end{align*}
This, together with the relation $h_t=e^tg_t$, shows that $h_t$ has the desired form~\eqref{e:form_of_ht}.

For $t>0$, we have $h_t=e^t h_0 \ast  k_t$.  Hence
\[
(h_t + \ddot h_t)(s)=e^t [(h_0 + \ddot h_0) \ast  k_t ] (s)= e^t \langle h_0 + \ddot h_0 , k_t (s -  \cdot) \rangle \, ,
\]
where $h_0 + \ddot h_0 \in {\mathcal S}'(\R)$ is seen as a $2\pi$-periodic tempered distribution (note that 
each function $k_t(s-\cdot)$ belongs to the Schwartz space ${\mathcal S} (\R)$). 
Now assume that  $h_0+\ddot h_0\neq0$ and $h_0+\ddot h_0\geq0$ in the sense of distributions. Since 
$k_t>0$, we conclude  that $(h_t + \ddot h_t)(s)>0$ for all $s\in \Su$.
\end{proof}

\begin{Remark}
The proof of Lemma~\ref{l:PDE_support_2} actually shows that the support functions $h_t$ have the form
\begin{align*}
 h_t(s) = e^t z_0 + \langle z_1,u(s)\rangle +e^{-3t}c(t,s),
\end{align*}
where $c:[0,+\infty)\times\Su\to\R$ is a continuous function whose restriction to any subset of the form $[t_0,\infty)\times\Su$, with $t_0>0$, is smooth and has bounded $C^k$ norm for every $k\geq0$. For $t>0$, the associated smooth solution $\gamma_t:\Su\hookrightarrow\R^2$ of the inverse curvature PDE \eqref{e:PDE} has the form
\begin{align*}
 \gamma_t(s)
 =
 h_t(s)u(s)+\dot h_t(s)\dot u(s)
 =
 z_1 + e^tz_0 u(s) + e^{-3t} f(t,s),
\end{align*}
where $f:=c\,u+\partial_s c\,\dot u$. In particular, this shows that $\gamma_t$ is asymptotic to a circle of center $z_1$ and radius $|e^tz_0|$ as $t\to\infty$.
\end{Remark}

\section{Levi potentials}\label{s:Levi_potentials}

\subsection{Level orbits}\label{ss:level_orbits}

Let us investigate the elementary properties of level orbits. Consider a smooth potential $U:\R^2\to\R$, the associated Newton equation $\ddot q=-\nabla U(q)$, and a level orbit $q$. We recall that $q$ is a solution of Hamilton equation defined on a maximal time interval and contained in some level set $U^{-1}(c)$.

\begin{Lemma}\label{l:const_speed}
Every level orbit $q(s)$ is defined for all $s\in\R$ and has constant speed, i.e.~$\tfrac d{ds}\|\dot q(s)\|\equiv0$. 
\end{Lemma}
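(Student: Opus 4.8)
The statement has two parts: (a) every level orbit is defined for all $s\in\R$, and (b) every level orbit has constant speed. I would prove (b) first, since it is essentially an energy-conservation computation, and then use it together with the usual ODE completeness argument to deduce (a).

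For the constant-speed claim, recall that along any solution $q$ of $\ddot q=-\nabla U(q)$ the total energy $E(s)=\tfrac12\|\dot q(s)\|^2+U(q(s))$ is constant: indeed $\tfrac{d}{ds}E=\langle \dot q,\ddot q\rangle+\langle \nabla U(q),\dot q\rangle=\langle \dot q,-\nabla U(q)+\nabla U(q)\rangle=0$. Now I use the hypothesis that $q$ is a \emph{level} orbit, i.e. $s\mapsto U(q(s))$ is constant, equal to some $c$. Then $\tfrac12\|\dot q(s)\|^2=E-c$ is constant, hence $\|\dot q(s)\|$ is constant and $\tfrac{d}{ds}\|\dot q(s)\|\equiv0$. (If $\|\dot q\|$ were ever zero it would be identically zero, and $q$ a constant orbit sitting at a critical point of $U$, which is a degenerate but still admissible level orbit with constant — zero — speed; this edge case needs only a remark.)

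For completeness of the maximal interval $I$, suppose $I=(\alpha,\beta)$ with, say, $\beta<+\infty$. Since $\|\dot q(s)\|\equiv v$ is bounded on $I$, the curve $q$ is Lipschitz, so $q(s)$ converges to some point $q_\beta\in\R^2$ as $s\to\beta^-$, and likewise $\dot q(s)=v\,T_{q}(s)$ stays bounded. Thus $(q(s),\dot q(s))$ converges to a point in phase space as $s\to\beta^-$; by the standard ODE escape-time criterion (a maximal solution either exists for all time or leaves every compact set), this contradicts maximality of $\beta$. The same argument applies at $\alpha$. Hence $I=\R$.

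**Main obstacle.** There is essentially no serious obstacle here — the only point requiring a little care is the interplay between the two notions of "constant": energy conservation gives that $\tfrac12\|\dot q\|^2+U(q)$ is constant for \emph{every} Newtonian orbit, and the extra hypothesis "level" is exactly what upgrades this to $\|\dot q\|$ constant. One should also be slightly careful that the a priori bound on $\|\dot q\|$ alone is what powers the completeness argument (one does not get, and does not need, a bound on $\ddot q$ a priori, since $\nabla U$ need not be globally bounded — but $\nabla U$ is continuous, hence bounded on the compact closure of the bounded set $q(I)$, which also suffices). So the whole proof is short; I would present the energy computation, extract constancy of the speed, then invoke the escape-time lemma for the completeness of $I$.
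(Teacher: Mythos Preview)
Your proposal is correct and follows essentially the same approach as the paper: use energy conservation together with the level-orbit hypothesis to get constant speed, then infer global existence from the resulting a priori bound. The paper's proof is extremely terse (it simply says ``this readily implies that the maximal domain of definition of the level orbit $q$ is the whole real line''), whereas you spell out the escape-time argument in detail; one minor remark is that convergence of $\dot q(s)$ as $s\to\beta^-$ is not immediate from boundedness alone, but as you note parenthetically it follows from the local boundedness of $\nabla U$, and in any case the escape-time criterion only needs the phase trajectory to remain in a compact set.
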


\begin{proof}
The conservation of energy for the solutions of Hamilton equation implies that $\tfrac12\|\dot q(s)\|^2+U(q(s))$ is independent of $s$, and therefore the speed $v:=\|\dot q(s)\|\geq 0$ is independent of $s$ as well. This readily implies that the maximal domain of definition of the level orbit $q$ is the whole real line.
\end{proof}

From now on, we assume that $U$ is a Levi potential, and denote by $\reg(U):=\R^2\setminus\crit(U)$ the open subset of its regular points. For each $x\in\reg(U)$, we denote by 
\[\ell_x:=U^{-1}(U(x))\cap\reg(U)\] the regular part of the level set of $U$ containing $x$.
We define the vector field 
\[N:\reg(U)\to\R^2,\qquad N(x)=\frac{\nabla U(x)}{\|\nabla U(x)\|}.\]
Notice that $N(x)$ is a unit normal to $\ell_x$ pointing in the increasing direction of $U$. We orient each $\ell_x$ so that $J N(x)\in T_x\ell_x$ is a positive tangent vector, where $J:\R^2\to\R^2$, $J(x_1,x_2)=(-x_2,x_1)$ is the complex structure of $\R^2$. We say that a parametrized smooth immersed curve contained in some $\ell_x$ is \emph{direct} if its orientation agrees with the orientation of $\ell_x$.
Notice that Hamilton equation is reversible: if $q(t)$ is a solution, the backward curve $t\mapsto q(-t)$ is a solution as well. 
We define the smooth function
\begin{align*}
 K:\reg(U)\to\R
\end{align*}
such that $K(x)$ is the signed geodesic curvature of the oriented level set $\ell_x$ with respect to the normal $N(x)$. Namely, if $q$ is a direct level orbit such that $q(0)=x$, then
\begin{align*}
 K(x)=-\frac{\langle N(q(0)),\ddot q(0) \rangle}{\|\dot q(0)\|^2}.
\end{align*}

\begin{Lemma}\label{l:level_orbit_const_speed}
Any level orbit $q$ with $q(0)\in \reg(U)$ has constant speed 
\begin{align*}
\|\dot{q}(s)\|^2=\|\dot{q}(0)\|^2=\frac{\|\nabla U(q(0))\|}{K(q(0))},
\qquad\forall s\in\R.
\end{align*}
In particular, the curvature function $K$ is everywhere positive.
\end{Lemma}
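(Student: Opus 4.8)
The plan is to evaluate everything at the point $x:=q(0)$, playing the Newton equation off against the definition of $K$ as the signed geodesic curvature of the level set.

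First I would note that $q$ cannot be a stationary solution: were $q$ constant, then $\ddot q\equiv 0$ would force $\nabla U(x)=0$, contradicting $x\in\reg(U)$. Hence $\dot q\not\equiv 0$, and by Lemma~\ref{l:const_speed} the speed $v:=\|\dot q(s)\|$ is a positive constant; in particular $\|\dot q(s)\|^2=\|\dot q(0)\|^2=v^2$ for all $s$, which is the first asserted equality. Moreover, since $x\in\reg(U)$ and $\reg(U)$ is open, we have $q(s)\in\reg(U)$ — hence $q(s)\in\ell_x$ — for all $s$ sufficiently close to $0$; thus near $s=0$ the curve $q$ is a constant-speed immersed parametrization of a piece of the smooth $1$-manifold $\ell_x$.

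Next I would compute $\ddot q(0)$ in two ways. On one hand, the Newton equation gives $\ddot q(0)=-\nabla U(x)=-\|\nabla U(x)\|\,N(x)$, which is purely normal to $\ell_x$. On the other hand, since $q$ parametrizes $\ell_x$ with constant speed $v$ near $0$, its acceleration there has vanishing tangential component, and its component along $N(x)$ is, in the sign conventions of Section~\ref{ss:PDE}, $\langle\ddot q(0),N(x)\rangle=-K(x)\,v^2$ — this is precisely the defining relation for the geodesic curvature of $\ell_x$ at $x$ with respect to $N(x)$ (up to replacing $q$ by the reversed level orbit $s\mapsto q(-s)$, which is again a level orbit by reversibility of Hamilton's equation and changes neither $v$ nor $\ddot q(0)$, this is the formula stated via a direct level orbit). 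Comparing the two computations of $\langle\ddot q(0),N(x)\rangle$ gives $K(x)\,v^2=\|\nabla U(x)\|$. Since the right-hand side is positive and $v^2>0$, this forces $K(x)>0$ and $v^2=\|\nabla U(x)\|/K(x)$, the second asserted equality. Finally, because $U$ is a Levi potential every point of $\reg(U)$ lies on a level orbit, so $K>0$ throughout $\reg(U)$.

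I do not expect a genuine obstacle: the computation is short. The only points requiring a little care are verifying $\dot q(0)\neq 0$, so that the curvature formula is meaningful, and matching the oriented definition of $K$ with an arbitrary (possibly non-direct) level orbit; both are dealt with above.
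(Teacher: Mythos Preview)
Your proposal is correct and follows essentially the same route as the paper: plug Newton's equation $\ddot q(0)=-\nabla U(q(0))$ into the curvature formula $K(q(0))=-\langle N(q(0)),\ddot q(0)\rangle/\|\dot q(0)\|^2$ and invoke Lemma~\ref{l:const_speed}. Your version is a bit more explicit about why $\dot q(0)\neq0$, why the orientation of $q$ is irrelevant, and why the Levi hypothesis gives $K>0$ at every point of $\reg(U)$, but the underlying computation is identical.
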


\begin{proof}
Clearly, it is enough to argue for a direct level orbit $q$ with $q(0)\in \reg(U)$. Plugging Newton equation $\ddot q=-\nabla U(q)$ and the definition of $N$ into the expression of the curvature function $K$, we infer
\begin{align*}
 K(q(0))=-\frac{\langle  N(q(0)),\ddot q(0) \rangle}{\|\dot q(0)\|^2}=\frac{\|\nabla U(q(0))\|}{\|\dot q(0)\|^2}.
\end{align*}
This, together with the fact that the speed $\|\dot q\|$ is constant (Lemma~\ref{l:const_speed}), implies the lemma.
\end{proof}

We introduce the smooth function
\begin{align*}
v:\reg(U)\to(0,+\infty),\qquad v(x)=\sqrt{\frac{\|\nabla U(x)\|}{K(x)}},
\end{align*}
and the vector field
\begin{align*}
 V:\reg(U)\to\R^2,\qquad V(x)=v(x)\JJ N(x).
\end{align*}
Notice that the integral curves of $V$ are precisely the portions of direct level orbits $q$ in $\reg(U)$, and the function $v$ gives their speed, i.e.
\begin{align*}
 \dot q(s)=V(q(s)),\qquad\|\dot q(s)\|\equiv v(q(s)).
\end{align*}
In particular, the function $v$ is constant on every path-connected component of any level set $U^{-1}(c)\cap\reg(U)$.
Later on, in Lemma~\ref{l:level_orbits_in_reg}, we will show that the flow of $V$ is complete, that is, level orbits that intersect $\reg(U)$ are entirely contained in $\reg(U)$.

\subsection{Relations with the inverse curvature flow}
We define the vector field 
\begin{align*}
 W:\reg(U)\to\R^2,\qquad W(x)=\frac{N(x)}{K(x)},
\end{align*}
and denote by $\phi_t$ its flow. 

\begin{Prop}\label{p:level_orbit_inv_curv_flow}
Consider a smooth immersed curve $\gamma_0\subset \reg(U)$ contained in a level set of a Levi potential $U$. Then $\gamma_t:=\phi_t(\gamma_0)$ is also contained in a level set of $U$ for all $t\in\R$ for which it is well defined. In particular, if we fix a direct parametrization $\gamma_0:(a,b)\looparrowright\reg(U)$ and the corresponding parametrizations $\gamma_t:=\phi_t\circ\gamma_0:(a,b)\looparrowright\reg(U)$, then $\gamma_t$ is a solution of the inverse curvature flow.
\end{Prop}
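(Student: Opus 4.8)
The plan is to prove first that each $\gamma_t=\phi_t(\gamma_0)$ stays inside a level set of $U$, and then to read off the inverse curvature flow equation almost for free. The starting point is the identity
\[
\langle\nabla U,W\rangle=\Big\langle\nabla U,\tfrac{N}{K}\Big\rangle=\frac{\|\nabla U\|}{K}=v^2
\qquad\text{on }\reg(U),
\]
equivalently $W=v^2\,\nabla U/\|\nabla U\|^2$; thus $W$ is, pointwise, a positive multiple of the vector field $\nabla U/\|\nabla U\|^2$, whose flow is transverse to the level sets and increases $U$ at unit rate, the multiplying factor being $v^2$. Now $v$ is constant on every path-connected component of every set $U^{-1}(c)\cap\reg(U)$; since a small enough piece of a level set through a regular point is connected, $v$ is, near each point of $\reg(U)$, a smooth function of $U$ alone, and hence $\nabla(v^2)=\alpha\,\nabla U$ on $\reg(U)$ for some smooth $\alpha:\reg(U)\to\R$. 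Morally this already shows that $\phi_t$ sends level sets to level sets, since the multiplying factor depends only on which level set one sits on.

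To make this rigorous I would fix the given direct parametrization $\gamma_0:(a,b)\looparrowright\reg(U)$, put $\gamma_t(s)=\phi_t(\gamma_0(s))$ (jointly smooth in $(t,s)$), and study $w(t,s):=\partial_s\big(U(\gamma_t(s))\big)=\langle\nabla U(\gamma_t(s)),\partial_s\gamma_t(s)\rangle$. Since $\gamma_0$ lies in a level set, $w(0,\cdot)\equiv0$. Commuting the two partial derivatives and using the identity above,
\[
\partial_t w(t,s)=\partial_s\big(\langle\nabla U(\gamma_t(s)),W(\gamma_t(s))\rangle\big)=\partial_s\big(v(\gamma_t(s))^2\big)=\langle\nabla(v^2)(\gamma_t(s)),\partial_s\gamma_t(s)\rangle=\alpha(\gamma_t(s))\,w(t,s).
\]
For each fixed $s$, the function $t\mapsto w(t,s)$ then solves a scalar linear homogeneous ODE with vanishing initial value, hence $w\equiv0$ on the whole interval on which $\gamma_t$ is defined; that is, $s\mapsto U(\gamma_t(s))$ is constant for every $t$. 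This is the first claim.

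For the second claim, granted that $U$ is constant along each $\gamma_t$, the velocity $\dot\gamma_t(s)=D\phi_t(\gamma_0(s))\,\dot\gamma_0(s)$ is a nonzero (because $\phi_t$ is a diffeomorphism) vector tangent to the regular level curve $\ell_{\gamma_t(s)}$; it is a positive tangent at $t=0$ and never vanishes, so by continuity in $t$ it remains a positive tangent, i.e.\ $\gamma_t$ is a direct parametrization. Therefore $T_{\gamma_t}(s)=\JJ N(\gamma_t(s))$, whence $N_{\gamma_t}(s)=-\JJ T_{\gamma_t}(s)=N(\gamma_t(s))$, and by the definition of $K$ together with its positivity (Lemma~\ref{l:level_orbit_const_speed}) we get $K_{\gamma_t}(s)=K(\gamma_t(s))>0$. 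Since $\partial_t\gamma_t(s)=W(\gamma_t(s))=N(\gamma_t(s))/K(\gamma_t(s))=N_{\gamma_t}(s)/K_{\gamma_t}(s)$, the family $\gamma_t$ is a genuine solution of~\eqref{e:PDE} (smooth, immersed, with nowhere-vanishing curvature).

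The one genuinely delicate point I expect is the step from "$v$ is constant on path-connected components of level sets" to "$\nabla(v^2)$ is everywhere proportional to $\nabla U$ on $\reg(U)$": one has to use that a small enough level piece through a regular point is connected, so that $v$ is locally a function of $U$, together with the smoothness of $v$. Everything else — interchanging $\partial_s$ and $\partial_t$, solving a scalar linear ODE, and the identifications $N_{\gamma_t}=N\circ\gamma_t$ and $K_{\gamma_t}=K\circ\gamma_t$ — is routine.
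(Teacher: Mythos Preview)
Your proof is correct and takes essentially the same approach as the paper: both rest on the key observation that $v^2$ is locally a smooth function of $U$ on $\reg(U)$ (equivalently $\nabla(v^2)\parallel\nabla U$), and both finish the first claim by a scalar ODE uniqueness argument in $t$. The only cosmetic difference is the packaging---the paper applies uniqueness to $t\mapsto U(\phi_t(x))$, which solves the autonomous ODE $\dot z=f(z)$ with $f\circ U=v^2$, whereas you apply it to $w(t,s)=\partial_s U(\gamma_t(s))$, which solves the linear homogeneous ODE $\partial_t w=\alpha\,w$ with $w(0,\cdot)=0$; the identification $W\circ\gamma_t=N_{\gamma_t}/K_{\gamma_t}$ in the second half is handled the same way.
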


\begin{proof}
For each $x\in\reg(U)$ and $t\in\R$ such that $\phi_t(x)$ is defined, we have
\begin{align}
\label{e:ODE_for_U_phit_x}
\tfrac{d}{dt} U(\phi_t(x)) 
&= 
\frac{\langle\nabla U(\phi_t(x)),N(\phi_t(x))\rangle}{K(\phi_t(x))} 
=
\frac{\|\nabla U(\phi_t(x))\|}{K(\phi_t(x))} 
=
v(\phi_t(x))^2.
\end{align}
Since the function $v$ is constant on the path-connected components of every level set $U^{-1}(c)\cap\reg(U)$, by the implicit function theorem there exists an open neighborhood $Z$ of any given point of $\reg(U)$ and a smooth function $f:U(Z)\to\R$ such that $f(U(y))=v(y)^2$ for all $y\in Z$. This, together with~\eqref{e:ODE_for_U_phit_x}, implies that, for all $t\in\R$ such that $\phi_t(x)\in Z$,  the function $t\mapsto U(\phi_t(x))$ is a solution of the ODE $\dot z(t)=f(z(t))$, and therefore is uniquely determined by the initial value $U(\phi_0(x))=U(x)$. Since the curve $\gamma_0$ is connected, this proves that $U(\phi_t(x))=U(\phi_t(y))$ for all $x,y\in\gamma_0$.

Finally, fix a direct parametrization $\gamma_0:(a,b)\looparrowright\reg(U)$, and the corresponding parametrizations $\gamma_t:=\phi_t\circ\gamma_0$. Since every $\gamma_t$ is contained in a level set of $U$, the vector $N(\gamma_t(s))$ is normal to $\dot\gamma_t(s)$, and the value $K(\gamma_t(s))$ is the curvature of  $\gamma_t$ at $\gamma_t(s)$ with respect to the orientations introduced in Section~\ref{ss:level_orbits}. Therefore, with the notation of Section~\ref{ss:PDE}, the vector field $W\circ\gamma_t$ coincides with $N_{\gamma_t}/K_{\gamma_t}$, and we conclude that $\gamma_t$ satisfies the inverse curvature flow PDE
\[
\partial_t\gamma_t = \frac{N_{\gamma_t}}{K_{\gamma_t}}.
\qedhere
\]
\end{proof}

\begin{Cor}\label{c:level_orbit_inv_curv_flow}
Let $q_0$ be a level orbit of a Levi potential $U$ with $q_0(0)\in\reg(U)$. We set $v_t:=v(\phi_t(q_0(0)))$ for all $t\in\R$ for which the right-hand side is defined. Then, on their maximal interval of definition containing $0$, the curves $q_t$ given by
\[q_t(s):=\phi_t(q_0(e^{-t}v_0^{-1}v_t s))\] 
are also portions of level orbits.
\end{Cor}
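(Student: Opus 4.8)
\textbf{Proof proposal for Corollary~\ref{c:level_orbit_inv_curv_flow}.}

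The plan is to reduce the statement to Proposition~\ref{p:level_orbit_inv_curv_flow} by a careful bookkeeping of parametrizations. Fix the level orbit $q_0$ with $q_0(0)=x\in\reg(U)$. Since $q_0$ is a direct or anti-direct level orbit, we may assume (reversing time if necessary) that $q_0$ is direct, so that $\dot q_0 = V\circ q_0$ and $\|\dot q_0\|\equiv v_0 := v(x)$. The image of $q_0$ is then a connected immersed curve $\gamma_0 \subset \reg(U)$ contained in the level set $U^{-1}(U(x))$, and $q_0(\cdot)$ is precisely a constant-speed direct parametrization of it, related to the unit-speed parametrization by rescaling the variable by $v_0$. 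Apply Proposition~\ref{p:level_orbit_inv_curv_flow}: the curves $\gamma_t := \phi_t(\gamma_0)$ stay in level sets of $U$, and with the direct parametrization $\tilde\gamma_t := \phi_t\circ q_0$ one gets a solution of the inverse curvature flow (the only change from the unit-speed parametrization used in the proposition is a $t$-independent rescaling of $s$, which by the geometric invariance of~\eqref{e:PDE} noted in Section~\ref{ss:PDE} is still a solution).

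Next I would identify $\|\dot{\tilde\gamma}_t\|$. By construction $\tilde\gamma_t(s) = \phi_t(q_0(s))$, and at $s$ with $q_0(s)\in\gamma_0$ the value $v(\phi_t(q_0(s)))$ equals $v_t = v(\phi_t(x))$, because $v$ is constant on path-connected components of level sets of $U$ intersected with $\reg(U)$ and $\phi_t$ maps the connected curve $\gamma_0$ into one such component (this is exactly what was shown inside the proof of Proposition~\ref{p:level_orbit_inv_curv_flow}). Moreover, by Lemma~\ref{l:evolution_speed} (applied after noting $\tilde\gamma_t$ solves~\eqref{e:PDE}), one has $\|\dot{\tilde\gamma}_t(s)\| = e^{t}\|\dot{\tilde\gamma}_0(s)\| = e^t v_0$. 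So the constant-speed direct parametrization $\tilde\gamma_t$ has speed $e^t v_0$, whereas a genuine level orbit lying in the level set through $\tilde\gamma_t(s)$ must have speed $v_t$ (Lemma~\ref{l:level_orbit_const_speed}). The reparametrization $s\mapsto e^{-t}v_0^{-1}v_t\, s$ is precisely the affine change of variable turning the speed $e^t v_0$ into the speed $v_t$, which is why $q_t(s) = \phi_t(q_0(e^{-t}v_0^{-1}v_t s))$ is the natural candidate.

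It then remains to check that $q_t$, so reparametrized, is actually a level orbit, i.e.\ solves Newton's equation $\ddot q_t = -\nabla U(q_t)$, not merely that it traces out the correct level set with the correct speed. For this I would argue that each $\tilde\gamma_t$, being a direct parametrization of (a connected piece of) the regular level set $U^{-1}(c_t)\cap\reg(U)$ with $c_t := U(\phi_t(x))$, is an integral curve of the vector field $v\cdot\JJ N = V$ up to speed: indeed, a curve contained in a level set of $U$, oriented as in Section~\ref{ss:level_orbits}, is tangent to $\JJ N$, and a curve contained in a level set that moreover has the constant speed $v$ prescribed by Lemma~\ref{l:level_orbit_const_speed} and solves $\dot q = V(q)$ is exactly an integral curve of $V$, hence a level orbit. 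So after the affine reparametrization the curve $q_t$ satisfies $\dot q_t = V(q_t)$, $\|\dot q_t\|\equiv v_t = v(q_t)$, and therefore is a portion of a direct level orbit by the characterization recorded at the end of Section~\ref{ss:level_orbits}. Undoing the initial time-reversal if it was needed, the same conclusion holds in general.

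The main obstacle I anticipate is the last step: making sure that matching the level set \emph{and} the speed is enough to conclude that the reparametrized curve is genuinely a level orbit (a solution of Newton's equation), rather than just a reparametrized geometric curve. The clean way to handle this is to observe that the level-orbit condition $\dot q = V(q)$ is a first-order ODE: once one knows a direct curve lies in a fixed regular level set of $U$ and has the unique speed $v$ dictated by Lemma~\ref{l:level_orbit_const_speed} at every point, it is forced to be an integral curve of $V$, and all integral curves of $V$ are (portions of) direct level orbits by the discussion in Section~\ref{ss:level_orbits}. All the remaining ingredients — the geometric invariance of~\eqref{e:PDE}, the speed evolution of Lemma~\ref{l:evolution_speed}, and the constancy of $v$ along level sets used in Proposition~\ref{p:level_orbit_inv_curv_flow} — are already available, so once this point is settled the corollary follows.
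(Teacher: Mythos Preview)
Your proposal is correct and follows essentially the same route as the paper: apply Proposition~\ref{p:level_orbit_inv_curv_flow} to see that $\tilde\gamma_t=\phi_t\circ q_0$ solves the inverse curvature flow and stays in level sets, use Lemma~\ref{l:evolution_speed} to get $\|\dot{\tilde\gamma}_t\|\equiv e^t v_0$, then rescale the parameter to achieve speed $v_t$ and conclude that the result is an integral curve of $V$, hence a level orbit. You are in fact more explicit than the paper about the final step (the paper simply asserts ``Therefore, it is a portion of a level orbit''), and your justification via the first-order characterization $\dot q=V(q)$ from Section~\ref{ss:level_orbits} is exactly the right way to close that gap.
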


\begin{proof}
  We set $\gamma_t(s):=\phi_t(q_0(s))$, where $s$ varies in a maximal interval containing 0 for which the right-hand side is defined. Proposition~\ref{p:level_orbit_inv_curv_flow} implies that $\gamma_t$ is a solution of the inverse curvature flow, and each $\gamma_t$ is contained in a level set of $U$. Lemma~\ref{l:evolution_speed} implies that $\|\dot\gamma_t\|\equiv e^t \|\dot\gamma_0\|\equiv e^t v_0$. The reparametrized curve $q_t(s):=\gamma_t(e^{-t}v_0^{-1}v_t s)$ is an immersed curve contained in the level set of $U$ and has speed $\|\dot q_t\|\equiv\|\dot\gamma_t\| e^{-t}v_0^{-1}v_t \equiv v_t$. Therefore, it is a portion of a level orbit.
\end{proof}

\subsection{Level sets of a Levi potential}
As anticipated, we can now establish the completeness of the vector field $V$.

\begin{Lemma}\label{l:level_orbits_in_reg}
If a level orbit $q$ of a Levi potential $U$ satisfies $q(0)\in\reg(U)$, then $q(s)\in\reg(U)$ for all $s\in\R$.
\end{Lemma}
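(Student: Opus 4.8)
The plan is to argue by contradiction, using the maximality of the domain of definition of level orbits and the fact that, at a regular point, the speed of a level orbit is controlled by $v$. Suppose that $q:\R\to\R^2$ is a level orbit with $q(0)\in\reg(U)$ but $q(s_*)\in\crit(U)$ for some $s_*$; by reversibility and relabeling we may assume $s_*>0$ and that $q(s)\in\reg(U)$ for $s\in[0,s_*)$, with $q(s_*)\in\crit(U)$ being the first such time. The key point is that on $[0,s_*)$ the orbit $q$ is an integral curve of the vector field $V$, so $\dot q(s)=V(q(s))$ and $\|\dot q(s)\|=v(q(s))$; moreover $\tfrac12\|\dot q(s)\|^2+U(q(s))$ is the constant energy $E$, so $v(q(s))^2=2(E-U(q(s)))$ for $s\in[0,s_*)$. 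By continuity of $U$ and $\dot q$, letting $s\to s_*$ gives $\|\dot q(s_*)\|^2=2(E-U(q(s_*)))$. But at a critical point $x_*:=q(s_*)$, Newton's equation reads $\ddot q(s_*)=-\nabla U(x_*)=0$, so $x_*$ is an equilibrium; hence if $\dot q(s_*)=0$ as well, uniqueness for the second-order ODE forces $q\equiv x_*$, contradicting $q(0)\in\reg(U)$. Therefore $\dot q(s_*)\neq 0$, i.e. $E>U(x_*)$, and the orbit passes through the critical point $x_*$ with nonzero velocity.

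Now I would use the level-orbit property itself at $x_*$: $q$ is contained in the level set $U^{-1}(c)$ with $c=U(q(0))$, hence $U(x_*)=c=U(q(0))$. On the other hand, on $[0,s_*)$ we have, differentiating $U(q(s))$ twice and using $\ddot q=-\nabla U(q)$ together with $\langle\nabla U(q(s)),\dot q(s)\rangle=0$ (which holds because $q$ stays in a level set), the relation $\langle \nabla U(q(s)),\ddot q(s)\rangle = -\|\nabla U(q(s))\|^2$, so that Lemma~\ref{l:level_orbit_const_speed} gives $K(q(s))=\|\nabla U(q(s))\|/\|\dot q(s)\|^2 = \|\nabla U(q(s))\|/\,2(E-c)$ for $s\in[0,s_*)$. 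Thus the curvature $K(q(s))$ tends to $0$ as $s\to s_*$ because $\nabla U(x_*)=0$. I would then combine this with Proposition~\ref{p:level_orbit_inv_curv_flow}: the flow $\phi_t$ of $W=N/K$ transports the regular level curve through $q(0)$ to a solution of the inverse curvature flow, but $K\to 0$ along the orbit means $W=N/K$ blows up near $x_*$, so the regular level set $\ell_{q(0)}$ cannot be closed up through $x_*$ — and I expect the cleanest way to derive the contradiction is to show the orbit $q$ restricted to $[0,s_*)$ has infinite length (since $\|\dot q\|=\sqrt{2(E-c)}$ is a fixed positive constant, the length on $[0,s_*)$ is $\sqrt{2(E-c)}\,s_*<\infty$, so this does not immediately work) — hence I must instead look at the behavior of the orbit \emph{beyond} $x_*$.

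So the decisive step is local analysis near the equilibrium $x_*$ with a nonzero-velocity orbit passing through it. Since $\nabla U(x_*)=0$ and $U(x_*)=c$, near $x_*$ the level set $U^{-1}(c)$ has a singular point; but $q$ is a genuine $C^\infty$ curve passing through $x_*$ with $\dot q(s_*)\neq 0$, along which $U\equiv c$. I would Taylor-expand: writing $q(s)=x_*+\dot q(s_*)(s-s_*)+O((s-s_*)^2)$ and using $U(q(s))\equiv c$ with $\nabla U(x_*)=0$, the second-order term forces $\langle \mathrm{Hess}\,U(x_*)\,\dot q(s_*),\dot q(s_*)\rangle=0$, and from Newton's equation $\ddot q(s)=-\nabla U(q(s)) = -\mathrm{Hess}\,U(x_*)(q(s)-x_*)+O(|q(s)-x_*|^2)$, so near $s_*$ the orbit is governed by the linearized system $\ddot\xi = -\mathrm{Hess}\,U(x_*)\,\xi$. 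The main obstacle, and the heart of the argument, is to show this forces a contradiction with $q$ being a level orbit: I expect one shows that the linearized flow drives $U(q(s))$ away from $c$ to second order in $(s-s_*)$ unless $\mathrm{Hess}\,U(x_*)$ vanishes in the relevant directions, and iterating (or invoking that $q$ must cross infinitely many distinct regular level sets, or that the energy relation $\|\dot q\|^2=2(E-U(q))$ and $U\equiv c$ are incompatible with $U$ non-constant near a degenerate critical point through which a level orbit passes) yields the contradiction. Alternatively, and perhaps more robustly, I would invoke Proposition~\ref{p:open} or Proposition~\ref{p:periodic} on the inverse curvature flow $\gamma_t=\phi_t\circ\gamma_0$ obtained from a small regular arc of $\ell_{q(0)}$ near $x_*$: approaching the critical set corresponds to a finite-time (in $t$) singularity of this flow that is incompatible with the local smooth structure, so the only way out is $q(0)\in\crit(U)$, a contradiction. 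I would finish by noting that, since the argument is symmetric under $s\mapsto -s$, we get $q(s)\in\reg(U)$ for all $s\in\R$.
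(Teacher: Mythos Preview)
Your setup is correct: you reduce to a first hitting time $s_*>0$ with $q(s_*)=x_*\in\crit(U)$, you show $\dot q(s_*)\neq0$ via ODE uniqueness, and you correctly observe that
\[
K(q(s))=\frac{\|\nabla U(q(s))\|}{\|\dot q(0)\|^2}\ttoup_{s\to s_*^-}0.
\]
This is exactly the quantity the paper exploits. But from here on your proposal does not close the argument. The Hessian/Taylor analysis you sketch only yields $\langle\mathrm{Hess}\,U(x_*)\dot q(s_*),\dot q(s_*)\rangle=0$, which carries no contradiction in the smooth category (the critical point can be arbitrarily degenerate, so iterating gives nothing). Your alternative of invoking Propositions~\ref{p:periodic} or~\ref{p:open} is not viable either: those statements concern \emph{global} solutions of the inverse curvature flow on closed or properly embedded curves, not local singularities of $W=N/K$ near a point where $K\to0$; you never produce such a global object, and ``incompatible with the local smooth structure'' is not an argument.

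The missing idea is to turn $K(q(s))\to0$ into a contradiction by obtaining a \emph{uniform upper bound} on $1/K(q(s))$ for $s\in[0,s_*)$. The paper does this with a variational trick: write the nearby level curves as $\gamma_t(s)=\phi_t(q(s))=\sigma_{w_t s}(\gamma_t(0))$, where $\sigma_r(x)=\pi\circ\psi_r(x,V(x))$ is the composition of the (globally defined, smooth) Hamiltonian flow with the lift $x\mapsto(x,V(x))$. Differentiating this identity in $t$ at $t=0$ and pairing with $N(q(s))$ gives
\[
\frac{1}{K(q(s))}=\frac{\big\langle d\sigma_s(q(0))\,N(q(0)),\,N(q(s))\big\rangle}{K(q(0))}\leq\frac{\|d\sigma_s(q(0))\|}{K(q(0))},
\]
the tangential term dropping because $V\perp N$. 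Since $(s,x)\mapsto\sigma_s(x)$ is smooth on $[0,s_*]\times$ (a neighborhood of $q(0)$), the right-hand side is bounded uniformly in $s\in[0,s_*)$, contradicting $1/K(q(s))\to\infty$. You had all the ingredients (the flow $\phi_t$, the ICF connection, the curvature going to zero) but not this differentiation-along-the-flow step that converts smoothness of the Hamiltonian flow into a curvature bound.
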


\begin{proof}
  We denote by $\psi_s$ the Hamiltonian (local) flow on $T^*\R^2=\R^2\times\R^2$ associated with the potential $U$. Namely, if $q$ is a solution of Hamilton equation defined on some maximal neighborhood of $0$, we have $(q(s),\dot q(s))=\psi_s(q(0),\dot q(0))$. 
We denote by $\pi:T^*\R^2\to\R^2$ the base projection of the cotangent bundle, and we define the family of maps
\begin{align*}
 \sigma_s:\reg(U)\to\R^2,\qquad\sigma_s(x)=\pi\circ\psi_s(x,V(x)).
\end{align*}
Notice that these maps are indeed well defined for all $s\in\R$. Indeed, $q(s):=\sigma_s(x)$ is the direct level orbit starting at $q(0)=x$, and Lemma~\ref{l:const_speed} guarantees that $q(s)$ is well defined for all $s\in\R$.

Assume by contradiction that $q$ is a level orbit of the Levi potential $U$ such that $q(0)\in\reg(U)$ and $q(s_0)\in\crit(U)$ for some $s_0\in\R\setminus\{0\}$. Without loss of generality, we assume that $q$ is a direct level orbit, and we consider the case $s_0>0$, the other case being analogous. By Proposition~\ref{p:level_orbit_inv_curv_flow}, the family of curves $\gamma_t(s):=\phi_t(q(s))$ is a solution of the inverse curvature flow on their maximal domain of definition. Moreover, if we set $w_t:=e^{t}v(\gamma_0(0))v(\gamma_t(0))^{-1}$ and $q_t(s):=\gamma_t(s/w_t)$, Corollary~\ref{c:level_orbit_inv_curv_flow} implies that every $q_t$ is a portion of a level orbit. For $t\in\R$ sufficiently close to 0, let $s_t\in(0,+\infty]$ be the largest positive value such that $\gamma_t|_{[0,s_t)}$ is contained in $\reg(U)$. 
Therefore 
\begin{align}
\label{e:gammas_psi_gamma0}
 \gamma_t(s)=\sigma_{w_t s}(\gamma_t(0)), \qquad\forall s\in[0,s_t).
\end{align} 
Since $\reg(U)$ is an open subset of $\R^2$, we have that $t\mapsto s_t$ is lower semicontinuous. Therefore, for each $s\in[0,s_0)$, we can differentiate the identity~\eqref{e:gammas_psi_gamma0} with respect to $t$ at $t=0$, and after taking the inner product with $N(q(s))$, we infer
\begin{equation}
\label{e:inv_curv_bound}
\begin{split}
 \frac{1}{K(q(s))} 
 &=
 \frac{\langle d\sigma_{w_t s}(q(0))N(q(0)),N(q(s))\rangle}{K(q(0))}
 +
 \underbrace{\langle V(q(s)),N(q(s))\rangle}_{=0} \tfrac{d}{dt} w_t s\\
 &\leq \frac{\|d\sigma_{w_t s}(q(0))\|}{K(q(0))}. 
\end{split}
\end{equation}
Since
\begin{align*}
 K(q(s))=-\frac{\langle  N(q(s)),\ddot q(s) \rangle}{\|\dot q(s)\|^2}=\frac{\|\nabla U(q(s))\|}{\|\dot q(0)\|^2}
\end{align*}
and $\nabla U(q(s_0))=0$, we infer that $K(q(s))\to0$ as $s\to s_0$. But this contradicts the uniform upper bound for $K(q(s))^{-1}$ given in~\eqref{e:inv_curv_bound}.
\end{proof}

We say that a level orbit $q$ of the Levi potential $U$ is \emph{regular} when it intersects $\reg(U)$, or equivalently when it is fully contained in $\reg(U)$ (Lemma~\ref{l:level_orbits_in_reg}). 

\begin{Lemma}\label{l:globally_defined}
Let $q$ be a regular level orbit of a Levi potential $U$ such that $\phi_t(q(0))$ is well defined for all $t\in[a,b]$, with $a<0<b$. Then $\phi_t(q(s))$ is well defined for all $t\in[a,b]$ and $s\in\R$.
\end{Lemma}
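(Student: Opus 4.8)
The plan is to show that the obstruction to extending the flow $\phi_t$ from the single level orbit $q$ to a whole neighborhood of level orbits is, at worst, the blow-up of the curvature $K$, i.e.~reaching the critical set; and then to rule this out using Lemma~\ref{l:level_orbits_in_reg} together with the geometric structure established in Proposition~\ref{p:level_orbit_inv_curv_flow} and Corollary~\ref{c:level_orbit_inv_curv_flow}. More precisely, fix $t\in[a,b]$ and consider the maximal set of $s\in\R$ for which $\phi_\tau(q(s))$ is defined for all $\tau$ between $0$ and $t$; call this set $\Omega_t$. It contains $0$ by hypothesis, and it is open in $\R$ since $\reg(U)$ is open and $W$ is a smooth vector field on $\reg(U)$. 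The goal is to show $\Omega_t=\R$, and by connectedness of $\R$ it suffices to show $\Omega_t$ is closed.

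First I would set $\gamma_\tau(s):=\phi_\tau(q(s))$ on its maximal domain. By Proposition~\ref{p:level_orbit_inv_curv_flow}, for each fixed $s$ the curve $\tau\mapsto\gamma_\tau(s)$ stays in a level set of $U$, and Corollary~\ref{c:level_orbit_inv_curv_flow} shows that, after the time-reparametrization $q_\tau(s)=\gamma_\tau(e^{-\tau}v_0^{-1}v_\tau s)$, the curve $q_\tau$ is a portion of a level orbit of $U$. By Lemma~\ref{l:level_orbit_const_speed}, $q_\tau$ has constant speed $v_\tau=v(\phi_\tau(q(0)))$, which is finite and positive for all $\tau\in[a,b]$ because $\phi_\tau(q(0))\in\reg(U)$ throughout. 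The key point is that $q_\tau$ being a level orbit means it is an integral curve of the complete vector field $V$ (via the Hamiltonian flow $\psi_s$ as in the proof of Lemma~\ref{l:level_orbits_in_reg}): indeed $q_\tau(s)=\sigma_{w_\tau s}(\gamma_\tau(0))$, where $w_\tau=e^\tau v_0 v_\tau^{-1}$ and $\sigma_\bullet$ is the globally defined family of maps from the proof of Lemma~\ref{l:level_orbits_in_reg}. Since $\sigma_s$ is defined for all $s\in\R$ and all starting points in $\reg(U)$, and since $\gamma_\tau(0)=\phi_\tau(q(0))\in\reg(U)$ for $\tau\in[a,b]$, it follows that $\gamma_\tau(s)=\sigma_{w_\tau s}(\gamma_\tau(0))$ is defined for all $s\in\R$, i.e.~$\gamma_\tau$ is a globally defined curve for each $\tau\in[a,b]$. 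In particular $\gamma_\tau$ never reaches $\crit(U)$, by Lemma~\ref{l:level_orbits_in_reg} applied to the level orbit $q_\tau$ (which meets $\reg(U)$).

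The remaining step is to go from "$\gamma_\tau(s)$ is defined for every individual $s$ and $\tau$" to "$\phi_\tau(q(s))$ is defined for all $\tau$ between $0$ and $t$ simultaneously", i.e.~to close $\Omega_t$. Suppose $s_n\in\Omega_t$, $s_n\to s_\infty$. The curve $\tau\mapsto\gamma_\tau(s_\infty)=\phi_\tau(q(s_\infty))$ is, by the previous paragraph, a well-defined curve in $\reg(U)$ for all $\tau\in[a,b]$ — it coincides with $\sigma_{w_\tau s_\infty}(\gamma_\tau(0))$ wherever either side is defined, and the right side is defined for all $\tau\in[a,b]$. Hence $s_\infty\in\Omega_t$, so $\Omega_t$ is closed, hence $\Omega_t=\R$. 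Since $t\in[a,b]$ was arbitrary, $\phi_t(q(s))$ is well defined for all $t\in[a,b]$ and all $s\in\R$, which is the claim.

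I expect the main obstacle to be the bookkeeping in identifying $\phi_\tau(q(s))$ with $\sigma_{w_\tau s}(\gamma_\tau(0))$ in a way that legitimately transfers the \emph{global} (in $s$) existence guaranteed for $\sigma$ by Lemma~\ref{l:const_speed} to the flow $\phi_\tau$; one has to argue carefully that the two flows generate the same curves (up to the explicit reparametrization by $w_\tau$), exactly as in the proof of Lemma~\ref{l:level_orbits_in_reg}, and that the reparametrization factor $w_\tau$ stays bounded and bounded away from zero on $[a,b]$, which follows from continuity of $v$ and the fact that the compact orbit segment $\{\phi_\tau(q(0)):\tau\in[a,b]\}$ lies in $\reg(U)$.
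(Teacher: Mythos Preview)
Your strategy coincides with the paper's: identify $\phi_\tau(q(s))$ with the level orbit through $\phi_\tau(q(0))$ via the reparametrization $s\mapsto w_\tau s$, then run an open--closed (equivalently, maximal interval) argument in $s$. The paper does this for all $t\in[a,b]$ at once rather than fixing $t$, but that is cosmetic.

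There is one genuine gap in your closing step. You write: ``it coincides with $\sigma_{w_\tau s_\infty}(\gamma_\tau(0))$ wherever either side is defined, and the right side is defined for all $\tau\in[a,b]$. Hence $s_\infty\in\Omega_t$.'' That inference is not valid as stated: knowing that a candidate formula for the flow is globally defined does not, by itself, force the integral curve $\tau\mapsto\phi_\tau(q(s_\infty))$ of $W$ to exist on all of $[a,b]$. What you must invoke is the standard ODE escape lemma. Concretely: for every $\tau$ in the maximal interval $I\subset[a,b]$ on which $\phi_\tau(q(s_\infty))$ is defined, continuity gives
\[
\phi_\tau(q(s_\infty))=\lim_{s\to s_\infty}\phi_\tau(q(s))=\lim_{s\to s_\infty}\sigma_{w_\tau s}(\gamma_\tau(0))=\sigma_{w_\tau s_\infty}(\gamma_\tau(0)),
\]
so the integral curve stays inside the set
\[
K:=\big\{\sigma_{w_\tau s}(\gamma_\tau(0))\ :\ \tau\in[a,b],\ s\in[0,s_\infty]\big\},
\]
which is compact (continuous image of $[a,b]\times[0,s_\infty]$, using that $\tau\mapsto w_\tau$ is continuous and bounded on $[a,b]$) and contained in $\reg(U)$ (by Lemma~\ref{l:level_orbits_in_reg}). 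Since an integral curve of the smooth vector field $W$ on $\reg(U)$ that remains in a compact subset of $\reg(U)$ cannot cease to exist, we get $I=[a,b]$, i.e.\ $s_\infty\in\Omega_t$. This compactness step is precisely what the paper supplies; your last paragraph flags the issue but frames it as ``bookkeeping'' when it is really the one substantive ODE ingredient in the proof.
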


\begin{proof}
Assume without loss of generality that the regular level orbit $q$ is direct.
For each $t\in(a,b)$, since $\phi_t(q(0))\in\reg(U)$, Lemmas~\ref{l:const_speed} and~\ref{l:level_orbits_in_reg} imply that the regular direct level orbit $q_t$ such that $q_t(0)=\phi_t(q(0))$ is a curve of the form $q_t:\R\to\reg(U)$. We set $w_t:=e^tv(q(0))v(q_t(0))^{-1}$.  Corollary~\ref{c:level_orbit_inv_curv_flow} implies that $\phi_t(q(s))=q_t(w_ts)$ for all $s$ in a maximal neighborhood of 0 such that the left-hand side is defined.

Since $\phi_t(q(0))$ is in the open set $\reg(U)$ for all $t\in[a,b]$, there exists a maximal interval $(s_0,s_1)\subset\R$ such that $\phi_t(q(s))$ is a well defined point of $\reg(U)$ for all $t\in[a,b]$ and $s\in(s_0,s_1)$. We claim that $(s_0,s_1)=\R$. Indeed assume by contradiction that $s_1<+\infty$.  The curve $t\mapsto\phi_t(q(s_1))$ is defined on a neighborhood of $0$ but not on the whole $[a,b]$, and therefore exits every compact subset of $\reg(U)$. We readily obtain a contradiction: since $\phi_t(q(s))=q_t(w_ts)$ for all $s\in[0,s_1)$ and $t\in[a,b]$, the curve $t\mapsto\phi_t(q(s_1))$ is contained in the compact subset
\begin{align*}
\big\{q_t(w_ts)\ \big|\ t\in[a,b],\ s\in[0,s_1]\big\}\subset \reg(U).
\end{align*}
This proves that $s_1=+\infty$. Analogously, we have $s_0=-\infty$.
\end{proof}

\begin{Prop}\label{p:regular_level_orbit_periodic}
Any regular level orbit $q$ of a Levi potential $U$ is periodic, i.e. $q=q(\sigma+\cdot)$ for some $\sigma>0$.
\end{Prop}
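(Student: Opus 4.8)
The plan is to rule out a non-periodic regular level orbit, reducing the analysis to the two non-existence results for the inverse curvature flow. First I would note that, by Lemmas~\ref{l:const_speed} and~\ref{l:level_orbits_in_reg}, the curve $q$ is a complete integral curve of the nowhere-vanishing vector field $V$ on the manifold $\reg(U)$, so by uniqueness of solutions of ODEs it is either periodic or injective; assume for contradiction that it is injective, and take it to be direct. Then $q:\R\hookrightarrow\R^2$ is an injective immersion of constant speed $v_0:=\|\dot q\|>0$ with everywhere positive curvature $K(q(\cdot))$ (Lemma~\ref{l:level_orbit_const_speed}). Applying Lemma~\ref{l:curvature_plane_lines} to a unit-speed reparametrization of $q$, one of the following holds: (i) $q$ is a proper embedding $\R\hookrightarrow\R^2$; or (ii) at least one of the half-curves $q|_{(-\infty,0]}$, $q|_{[0,+\infty)}$ is bounded in $\R^2$.

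In case (i) I would flow $q$ forward and invoke Proposition~\ref{p:open}. Since $q(0)$ lies in the open set $\reg(U)$ and $W=N/K$ is smooth there, $\phi_t(q(0))$ is defined for $t$ in some interval $[a,b]$ with $a<0<b$; Lemma~\ref{l:globally_defined} then shows $\phi_t(q(s))$ is defined for all $t\in[a,b]$ and $s\in\R$, and Proposition~\ref{p:level_orbit_inv_curv_flow} shows that $\gamma_t:=\phi_t\circ q$, $t\in[0,b]$, is a solution of the inverse curvature flow whose initial curve $\gamma_0=q$ is a proper embedding $\R\hookrightarrow\R^2$. This contradicts Proposition~\ref{p:open}.

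In case (ii) I would pass to the phase space $T^*\R^2=\R^2\times\R^2$ and its Hamiltonian flow $\psi_s$. Say $q|_{[0,+\infty)}$ is bounded (the case of $q|_{(-\infty,0]}$ being symmetric, using the $\alpha$-limit set in place of the $\omega$-limit set). Since $\|\dot q(s)\|\equiv v_0$, the forward phase orbit $\{(q(s),\dot q(s)):s\ge0\}$ has compact closure, so it has a non-empty, compact, $\psi$-invariant $\omega$-limit set $\Lambda$. Any $(x,p)\in\Lambda$ is a limit $\lim_k(q(s_k),\dot q(s_k))$ with $s_k\to+\infty$, hence $U(x)=c$ and moreover $x\in\crit(U)$: indeed $q|_{[0,+\infty)}$ cannot accumulate at a regular point $x$, since such an $x$ would lie on the regular level-set component $q(\R)$, and as $q:\R\to q(\R)$ is a diffeomorphism this would force $s_k$ to converge. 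Thus $\Lambda\subset\big(\crit(U)\cap U^{-1}(c)\big)\times\R^2$. Picking $(x_0,p_0)\in\Lambda$, its $\psi$-orbit stays in $\Lambda\subset\crit(U)\times\R^2$, so along it $\ddot q=-\nabla U(q)\equiv0$, i.e. the orbit is the uniform motion $s\mapsto(x_0+sp_0,p_0)$; and by conservation of energy $\tfrac12\|p_0\|^2+U(x_0)=\tfrac12 v_0^2+c$ with $U(x_0)=c$, so $\|p_0\|=v_0>0$. Hence this orbit is unbounded, contradicting the compactness of $\Lambda$.

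The routine parts are the verification that the injective $q$ is a diffeomorphism onto the connected component of $U^{-1}(c)\cap\reg(U)$ it traces out, the domain-of-definition bookkeeping for $\phi_t$, and the orientation conventions. The conceptual heart, and the step I expect to be the main obstacle, is case (ii): ruling out a bounded (hence non-proper) injective level orbit. The phase-space argument above is how I would resolve it, the crucial input being that on a Levi potential the conserved speed $v_0$ is strictly positive, so the limit set contains no rest point of the Hamiltonian flow, and the only $\psi$-orbits inside $\crit(U)\times\R^2$ with energy $\tfrac12 v_0^2+c$ are unbounded uniform motions.
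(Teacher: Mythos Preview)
Your proof is correct and follows the paper's overall architecture: reduce to the dichotomy of Lemma~\ref{l:curvature_plane_lines}, dispatch the proper case exactly as the paper does via Lemma~\ref{l:globally_defined}, Proposition~\ref{p:level_orbit_inv_curv_flow} and Proposition~\ref{p:open}, and then rule out a bounded half-orbit. The only genuine difference is in this last step. The paper argues directly in configuration space: since the $\omega$-limit of $q$ lies in $\crit(U)$, one has $\epsilon_s:=\sup_{r\ge s}\|\ddot q(r)\|=\sup_{r\ge s}\|\nabla U(q(r))\|\to 0$, and a first-order Taylor estimate gives $\|q(s+\tfrac{v_0}{2\epsilon_s})-q(s)\|\ge v_0^2/(4\epsilon_s)\to\infty$, contradicting boundedness. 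Your argument instead lifts to phase space, uses compactness and $\psi$-invariance of the $\omega$-limit set $\Lambda$, and observes that any full orbit in $\Lambda\subset\crit(U)\times\R^2$ is a uniform straight-line motion of speed $v_0>0$, hence unbounded. Both arguments encode the same mechanism---vanishing force plus nonzero conserved speed forces the trajectory to run off to infinity---but yours packages it via a standard dynamical-systems device, while the paper's is a bare-hands quantitative estimate. Your version is arguably cleaner conceptually; the paper's is slightly more self-contained. One small point worth making explicit in your write-up: the map $q:\R\to\R^2$ is not merely an injective immersion but an embedding (indeed $q(\R)$ is a connected component of the properly embedded submanifold $U^{-1}(c)\cap\reg(U)$ of $\reg(U)$), which is the hypothesis Lemma~\ref{l:curvature_plane_lines} is stated under.
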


\begin{proof}
For each $c\in\R$, the intersection $U^{-1}(c)\cap\reg(U)$ is a (possibly empty or disconnected) 1-dimensional properly embedded submanifold of $\reg(U)$, since it is a level set of the submersion $U|_{\reg(U)}$. If $q$ is a parametrization of a connected component of $U^{-1}(c)\cap\reg(U)$ with constant speed $\|\dot q\|\equiv v(q(0))$, then $q$ is a level orbit, and therefore $q(s)$ is defined and contained in $\reg(U)$ for all $s\in\R$ (Lemma~\ref{l:level_orbits_in_reg}). This shows that the regular level orbits are parametrizations of the connected components of $U^{-1}(c)\cap\reg(U)$.
Assume by contradiction that there exists a regular level orbit $q$ that is not periodic. 

Since $q$ is a 1-dimensional properly embedded submanifold in $\reg(U)$ and is not a circle, as a map it is a proper embedding $q:\R\hookrightarrow \reg(U)$. We claim that $q$ is also proper as a map $q:\R\hookrightarrow \R^2$. If this is not true, then by Lemma~\ref{l:curvature_plane_lines} at least one of the half orbits $q((-\infty,0])$ or $q([0,+\infty))$ is contained in a compact subset $C\subset\R^2$. Consider the case $q([0,+\infty))\subset C$, the other one being analogous. Since $q:[0,+\infty)\hookrightarrow \reg(U)$ is proper, its $\omega$-limit is contained in the compact subset $C\cap\crit(U)$. Therefore 
\begin{align*}
\epsilon_s
:= 
\sup_{r\geq s}\|\ddot q(r)\|
=
\sup_{r\geq s}\|\nabla U(q(r))\|\ttoup_{s\to+\infty}0.
\end{align*}
This implies, for each $s>0$ and $r:=s+\tfrac{v(q(0))}{2\epsilon_s}$,
\begin{align*}
 \|q(r)-q(s)\|
 &=
 \left\|(r-s)\dot q(s) + \int_s^{r}\big( \dot q(u) - \dot q(s) \big)\,du\right\|\\
 &\geq 
 (r-s)v(q(0)) - \int_{s}^r (u-s)\epsilon_s\,du\\
 &\geq
 \frac{v(q(0))^2}{4\epsilon_s}
 \ttoup_{s\to+\infty} +\infty,
\end{align*}
contradicting the fact that $q|_{[0,+\infty)}$ is contained in the compact set $C$.

  We showed that $q:\R\hookrightarrow\R^2$ is a proper embedding. Let $\tau>0$ be small enough so that $\phi_t(q(0))$ is well defined for all $t\in[0,\tau]$. Lemma~\ref{l:globally_defined} implies that $\gamma_t(s):=\phi_t(q(s))$ is well defined for all $t\in[0,\tau]$ and $s\in\R$. Corollary~\ref{c:level_orbit_inv_curv_flow} implies that each $\gamma_t$ is a reparametrization of a regular level orbit. But Proposition~\ref{p:level_orbit_inv_curv_flow} implies that $\gamma_t$ is a solution of the inverse curvature flow defined for $t\in[0,\tau]$, which violates Proposition~\ref{p:open}.\end{proof}

\begin{proof}[Proof of Theorem~\ref{t:smooth}]
Let $U$ be a smooth Levi potential whose set of critical points $\crit(U)$ is totally path-disconnected, and $q$ a regular direct level orbit. By Proposition~\ref{p:regular_level_orbit_periodic}, $q$ is $\sigma$-periodic for some $\sigma>0$. Therefore, $q$ is an embedding of the form $q:\R/\sigma\Z\hookrightarrow\reg(U)$ with positive curvature. We denote by $\tau\in[-\infty,0)$ the infimum of the values $t<0$ such that $\phi_t(q(0))$ is a well defined point of $\reg(U)$. Lemma~\ref{l:globally_defined} implies that the curves $\gamma_t:=\phi_t\circ q:\R/\sigma\Z\hookrightarrow\reg(U)$ 
are well defined for all $t\in(\tau,0]$. 
Proposition~\ref{p:level_orbit_inv_curv_flow} implies that the family $\gamma_t$ is a solution of the inverse curvature flow, and Corollary~\ref{c:level_orbit_inv_curv_flow} implies that each $\gamma_t$ is a reparametrization of a regular level orbit, and actually a direct one (since $\gamma_0=q$ is direct). In particular each $\gamma_t$ has positive curvature, and therefore it bounds a convex compact subset $C_t\subset\R^n$. The inverse curvature PDE~\eqref{e:PDE} readily implies that $C_{t_1}\subset\interior(C_{t_2})$ for all $t_1,t_2\in(\tau,0]$ with $t_1<t_2$.

We claim that $\tau=-\infty$. Indeed, let us assume by contradiction that $\tau$ is finite. This implies that, for every $s\in\R/\sigma\Z$, the curve 
$(\tau,0]\ni t\mapsto\gamma_t(s)$ exits any compact subset of $C_0\cap\reg(U)$. Therefore, the non-empty convex compact set
\begin{align*}
 C:=\bigcap_{t\in(-\tau,0]} C_t
\end{align*}
has boundary contained in $\crit(U)$. Since $\partial C$ is path-connected (being the boundary of a convex compact set) while $\crit(U)$ is totally path-disconnected, $\partial C$ must be a singleton. However, this implies that 
\begin{align}\label{e:limit_length_ThmA}
\ell_t:=\mathrm{length}(\gamma_t)\ttoup_{t\to\tau^+}0.
\end{align} 
Since the regular level orbit $q$ is parametrized with constant speed $\|\dot q\|\equiv v(q(0))$, each $\gamma_t$ is parametrized with constant speed $\|\dot\gamma_t\|\equiv e^t v(q(0))$ (Lemma~\ref{l:evolution_speed}), and therefore $\ell_t=e^t v(q(0))\sigma \to e^{\tau} v(q(0))\sigma$ as $t\to\tau^+$, which contradicts~\eqref{e:limit_length_ThmA}.

We proved that the family of smooth periodic curves $\gamma_t:\R/\sigma\Z\hookrightarrow\reg(U)$, for $t\in(-\infty,0]$, is a solution of the inverse curvature flow. Proposition~\ref{p:periodic} implies that $\gamma_t$ is a circular solution (Example~\ref{ex:circular_solution}). Let $\tau>0$ be the supremum of the time values $t>0$ such that $\phi_t(q(0))$ is well defined. 
Using Lemma~\ref{l:globally_defined}, Proposition~\ref{p:level_orbit_inv_curv_flow}, and Corollary~\ref{c:level_orbit_inv_curv_flow} as before, we can extend the family of curves $\gamma_t:=\phi_t\circ q:\R/\sigma\Z\hookrightarrow\reg(U)$ for all $t\in(-\infty,\tau)$, such a family is a solution of the inverse curvature flow, and each $\gamma_t$ is a reparametrization of a regular level orbit. Therefore, the first part of the proof implies that $\gamma_t$ is a circular solution of the inverse curvature flow, i.e.
\begin{align*}
 \gamma_t(s):=x_0 + e^t e^{2\pi s/\sigma  \JJ}x_1,\qquad\forall t\in(-\infty,\tau),\ s\in\R/\sigma\Z,
\end{align*}
where $x_0\in\R^2$, $x_1\in\R^2\setminus\{0\}$.
We claim that $\tau=+\infty$. Indeed, if $\tau<+\infty$, then for each $s\in\R/\sigma\Z$ the curve $[0,\tau)\ni t\mapsto\gamma_t(s)$ exits any compact subset of $\reg(U)$; therefore the curves $\gamma_t$ would converge as $t\to\tau^-$ to an embedded circle in $\crit(U)$, contradicting the fact that $\crit(U)$ is totally path-disconnected.

Summing up, we proved that every round circle centered at $x_0\in\R^2$ is a regular level set of $U$. Therefore $U$ has a unique critical point at $x_0$, and can be written as $U(x)=f(\|x-x_0\|^2)$ for some smooth function $f:[0,+\infty)\to\R$. 
\end{proof}

\subsection{Analytic Levi potential}
In order to prove Theorem~\ref{t:analytic}, we first need some preliminaries on analytic functions. We begin by recalling the real version of the classical Puiseux theorem from, e.g.,
\cite[page104]{Ghys:2017aa}, and some of its consequences.

\begin{Thm}[Puiseux]
\label{t:Puiseux}
Let $U:\R^2\to\R$, $(x,y)\mapsto U(x,y)$ be a non-constant analytic function that vanishes at the origin and is not divisible by $x$ $($which can always be achieved by means of an analytic change of variables$)$. Then, in a neighborhood of the origin, the level set $U^{-1}(0)$ is either equal to the origin, or is the union of a finite number of arcs of the form
\[\gamma_i:[-\epsilon_i,\epsilon_i]\to\R^2,\qquad\gamma_i(t)=((-1)^{k_i}t^{m_i},f_i(t)),\]
for some positive integers $k_i\in\{0,1\}$ and $m_i\geq1$, and for some analytic functions $f_i$ such that
$f_i(0)=0$. The arcs $\gamma_i$ are injective, and their images only intersect at the origin.
\end{Thm}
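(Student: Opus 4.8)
The plan is to deduce the real statement from the classical complex Puiseux theorem by means of the Weierstrass preparation theorem, and then to read off the real picture by analysing the action of complex conjugation on the complex branches. (The parenthetical normalisation is harmless: a generic linear change of coordinates puts the $y$-axis off the finitely many tangent lines of the branches of $U^{-1}(0)$, hence the $y$-axis is not one of those branches, i.e. $x\nmid U$.) First I would complexify $U$ to a holomorphic germ $U_\C$ at $0\in\C^2$ with real coefficients. The hypothesis $x\nmid U$ says exactly that $U_\C(0,y)\not\equiv0$, so, since $U_\C(0,0)=0$, one has $U_\C(0,y)=c\,y^k+O(y^{k+1})$ with $c\neq0$ and $k\geq1$; thus $U_\C$ is $y$-regular of order $k$. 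By Weierstrass preparation, $U_\C=V\cdot P$ near the origin, with $V\in\mathcal O_{\C^2,0}$ a unit and $P(x,y)=y^k+b_{k-1}(x)y^{k-1}+\dots+b_0(x)$ a Weierstrass polynomial, $b_j\in\C\{x\}$, $b_j(0)=0$; uniqueness of this factorisation together with $\overline{U_\C(\bar x,\bar y)}=U_\C(x,y)$ forces $P$ to have real coefficients. Since $V$ is nonvanishing near the origin, $U^{-1}(0)=P^{-1}(0)\cap\R^2$ locally, so it suffices to describe the real zero set of the real Weierstrass polynomial $P$.

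Next I would invoke the classical complex Puiseux theorem for $P$. Over the ring $\C\{x\}$ of convergent power series (whose fraction field has algebraic closure the field of convergent Puiseux series), $P$ factors into monic irreducibles, and for each distinct irreducible factor of $y$-degree $m_i$ there is a convergent series $\phi_i(u)\in\C\{u\}$ with $\phi_i(0)=0$ such that the zero set of that factor near the origin is the irreducible analytic branch $B_i=\{(u^{m_i},\phi_i(u)) : |u|<\varepsilon_i\}$, the $m_i$ roots being $\phi_i(\omega^j u)$ with $\omega=e^{2\pi i/m_i}$. Minimality of $m_i$ makes the parametrisation $u\mapsto(u^{m_i},\phi_i(u))$ primitive, hence injective for small $u$, and it hits the origin only at $u=0$. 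Consequently $P^{-1}(0)=B_1\cup\dots\cup B_r$ in a neighbourhood of the origin, a \emph{finite} union of branches; this is the source of the "finite number of arcs".

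The real descent is the heart of the matter. Complex conjugation $\kappa(x,y)=(\bar x,\bar y)$ preserves $P^{-1}(0)$ and permutes the $B_i$, sending $B_i$ to the branch of the series $\bar\phi_i$ (conjugated coefficients). If $\kappa(B_i)\neq B_i$, then $B_i$ and $\kappa(B_i)$ are distinct irreducible plane-curve germs at the origin, so they meet only there; hence $B_i\cap\R^2\subseteq B_i\cap\kappa(B_i)=\{0\}$, and such a branch contributes nothing but the origin — in particular, if no branch is $\kappa$-fixed, $U^{-1}(0)=\{0\}$ near the origin. If instead $\kappa(B_i)=B_i$, then $\bar\phi_i(u)=\phi_i(\omega^{j_0}u)$ for some $j_0$, and since $\kappa$ is an involution $\omega^{2j_0}=1$: either $\omega^{j_0}=1$, so $\phi_i$ has real coefficients and the restriction to real $u=t$ gives $\gamma_i(t)=(t^{m_i},\phi_i(t))$ (so $k_i=0$); or $m_i$ is even and $\omega^{j_0}=-1$, whence $\bar c_l=(-1)^l c_l$ (real coefficients on the even part, purely imaginary on the odd part), so for $u=it$, $t\in\R$, one gets $x=i^{m_i}t^{m_i}=(-1)^{m_i/2}t^{m_i}$ and $\phi_i(it)$ real-analytic in $t$, giving an arc of the claimed form with $k_i\equiv m_i/2\pmod2$ and $f_i(t):=\phi_i(it)$. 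In both cases one checks, using primitivity, that the image of the real (resp. imaginary) $u$-line is exactly $B_i\cap\R^2$ near the origin; $f_i$ is analytic with $f_i(0)=0$, each $\gamma_i$ is injective (primitivity plus $\gamma_i^{-1}(0)=\{0\}$), and distinct $\gamma_i$'s have images meeting only at the origin (distinct branches of an analytic curve). This is the required conclusion.

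The routine part is the Weierstrass reduction and quoting complex Puiseux; the genuinely delicate step is the last one — pinning down, for each $\kappa$-fixed complex branch, the precise real normal form: that $x=(-1)^{k_i}t^{m_i}$ with exactly the stated sign, that the second coordinate is a true real-coefficient analytic function of $t$, and, above all, that the real-$t$ (or imaginary-$u$) locus \emph{exhausts} $B_i\cap\R^2$ rather than only part of it. All of this is bookkeeping with the roots of unity $\omega^j$ and with primitivity of $\phi_i$, and that is where the care must go; everything else is standard.
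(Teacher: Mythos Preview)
The paper does not prove this theorem at all: it is quoted as the classical real Puiseux theorem, with a reference to Ghys's book. So you are supplying a proof where the paper only gives a citation. Your overall strategy---Weierstrass preparation to reduce to a real Weierstrass polynomial, complex Puiseux to get finitely many branches, then analysing complex conjugation on the branches---is exactly the standard one and is sound.

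There is, however, a genuine slip in your real-descent step. From $\bar\phi_i(u)=\phi_i(\omega^{j_0}u)$ you assert that ``since $\kappa$ is an involution, $\omega^{2j_0}=1$''. This is false. The induced map on the $u$-disk is $\sigma(u)=\omega^{j_0}\bar u$, and $\sigma^2(u)=\omega^{j_0}\overline{\omega^{j_0}\bar u}=\omega^{j_0}\omega^{-j_0}u=u$ \emph{for every} $j_0$; the involution condition imposes nothing. A concrete counterexample is the branch of $y^4+x=0$: here $m=4$, $\omega=i$, and one computes $j_0\in\{1,3\}$, so $\omega^{j_0}=\pm i\notin\{1,-1\}$. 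Your dichotomy $\omega^{j_0}\in\{1,-1\}$ therefore does not cover all $\kappa$-fixed branches.

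The fix is short and actually cleaner than the case split. Pick $\eta=e^{i\pi j_0/m}$, a square root of $\omega^{j_0}$. From $\bar c_l=c_l\omega^{j_0 l}=c_l\eta^{2l}$ one gets $\overline{c_l\eta^{l}}=c_l\eta^{l}$, so every $c_l\eta^l$ is real. Hence with $u=\eta t$, $t\in\R$, one has $x=\eta^{m}t^{m}=(-1)^{j_0}t^{m}$ and $f_i(t):=\phi_i(\eta t)=\sum_l(c_l\eta^l)t^l$ real-analytic. That the real locus is exhausted follows because $B_i\cap\R^2$ is the fixed set of $\sigma$ (primitivity makes the parametrisation bijective), and $\sigma(u)=u$ iff $u\in\R\eta$. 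This yields the arc $\gamma_i(t)=((-1)^{k_i}t^{m_i},f_i(t))$ with $k_i\equiv j_0\pmod 2$, and the remaining claims (injectivity, pairwise intersection only at the origin) go through exactly as you said.
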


\begin{Cor}\label{c:crit_locally_path_connected}
For each analytic function $U:\R^2\to\R$, the set of critical points $\crit(U)$ is locally path connected.
\end{Cor}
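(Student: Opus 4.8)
The plan is to reduce the statement to the local structure theorem that Puiseux's theorem provides. Fix an analytic function $U:\R^2\to\R$ and a point $p\in\crit(U)$; we must produce a path in $\crit(U)$ from $p$ to nearby points of $\crit(U)$, or else show $p$ is isolated in $\crit(U)$ (in which case the local path-connectedness condition is satisfied vacuously at $p$, since the singleton $\{p\}$ is path connected). Since critical points of $U$ are precisely the zeros of the analytic function $\|\nabla U\|^2 = U_x^2 + U_y^2$, which is analytic, it suffices to prove the following: \emph{for any analytic function $F:\R^2\to\R$, the zero set $F^{-1}(0)$ is locally path connected.} After translating $p$ to the origin, we may assume $F(0)=0$; if $F$ vanishes identically near the origin, the zero set is locally a neighborhood of $0$, hence locally path connected, so we may assume $F$ is non-constant near $0$, and after an analytic change of variables we may assume $F$ is not divisible by $x$.

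Now I would apply Theorem~\ref{t:Puiseux} directly to $F$. It tells us that in some neighborhood of the origin, $F^{-1}(0)$ is either $\{0\}$ (in which case $0$ is isolated in the zero set, and we are done), or a finite union of injective analytic arcs $\gamma_i:[-\epsilon_i,\epsilon_i]\to\R^2$ with $\gamma_i(0)=0$, whose images meet only at the origin. In the latter case, this exhibits $F^{-1}(0)$ locally as a finite ``star'' of arcs through $0$: given any neighborhood $W$ of $0$, shrink the $\epsilon_i$ so that each $\gamma_i([-\epsilon_i,\epsilon_i])\subset W$; the union $X := \bigcup_i \gamma_i([-\epsilon_i,\epsilon_i])$ is then a path-connected neighborhood of $0$ inside $F^{-1}(0)\cap W$ — path connected because every point of $X$ lies on some arc $\gamma_i$ which connects it to $0$, and $X$ is relatively open in $F^{-1}(0)$ because the Puiseux description is valid on a full neighborhood of $0$ and there are no other local branches. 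Hence $0$ has arbitrarily small path-connected neighborhoods in $F^{-1}(0)$, which is exactly local path connectedness at $0$.

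A couple of technical points deserve care. First, Theorem~\ref{t:Puiseux} as stated describes $F^{-1}(0)$ only \emph{in a neighborhood of the origin}; one must make sure that when passing to a smaller target neighborhood $W$ the arcs still exhaust $F^{-1}(0)\cap W'$ for some $W'\subset W$, which follows by taking $W'$ contained in the neighborhood on which the Puiseux description holds and restricting the arcs accordingly. Second, I should note that the analytic change of variables used to make $F$ non-divisible by $x$ is a local diffeomorphism, hence a homeomorphism onto its image, so it preserves the property of being locally path connected; the conclusion for the original $F^{-1}(0)$ follows. Finally, one translates back: since $p\in\crit(U)$ was arbitrary and $\crit(U) = (\|\nabla U\|^2)^{-1}(0)$, we conclude $\crit(U)$ is locally path connected everywhere.

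The main obstacle is essentially bookkeeping rather than a deep difficulty: correctly organizing the nested neighborhoods so that ``the Puiseux arcs, suitably truncated, form a path-connected relatively-open neighborhood of $p$ in $\crit(U)$,'' and making sure the reduction from $U$ to the analytic function $\|\nabla U\|^2$ and the normalizing change of variables are handled cleanly. There is no serious analytic content beyond invoking Theorem~\ref{t:Puiseux}; the finiteness of the number of arcs and the fact that they meet only at the origin are exactly what makes the local star picture path connected.
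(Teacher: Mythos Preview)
Your proposal is correct and follows essentially the same approach as the paper: set $V=\|\nabla U\|^2$, observe that $\crit(U)=V^{-1}(0)$, and apply Puiseux's theorem to $V$. The paper compresses this into two sentences, while you spell out in detail why the Puiseux description (finitely many arcs through the origin, meeting only there) yields arbitrarily small path-connected neighborhoods; the extra bookkeeping you flag is routine and your treatment of it is fine.
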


\begin{proof}
The function $V:=\|\nabla U\|^2$ is also analytic, and $\crit(U)=V^{-1}(0)$. The level set $V^{-1}(0)$ is locally path connected according to Theorem~\ref{t:Puiseux}.
\end{proof}

We recall that a planar graph is a graph topologically embedded in $\R^2$. Here, the graph is endowed with the usual topology that makes it a CW complex, with the vertices being the 0-cells and the edges being the 1-cells. The degree of a vertex $v$ of a graph is the number of edges incident to $v$, where an edge incident to $v$ at both ends is counted twice.

\begin{Cor}\label{c:planar_graph}
Let $U:\R^2\to\R$ be an analytic function. Each compact connected component of a level set $U^{-1}(c)$ is a planar graph $($possibly empty or with no edges$)$ whose vertices have even degrees.
\end{Cor}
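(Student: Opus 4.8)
The plan is a local-to-global argument in which Theorem~\ref{t:Puiseux} supplies all the analytic input and the rest is point-set topology.

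First I would dispose of the trivial cases. If $U$ is constant then $U^{-1}(c)$ is empty or all of $\R^2$ and has no compact connected component; if $\Gamma$ is a single point $\{p\}$ then $U^{-1}(c)$ equals $\{p\}$ near $p$ (Theorem~\ref{t:Puiseux}) and $\Gamma$ is trivially a graph with one vertex and no edges. So assume $U$ is non-constant (hence, by the identity principle, non-constant near each of its points) and $\Gamma$ has more than one point. Since $\Gamma$ is connected it has no isolated points, so near each $p\in\Gamma$ Theorem~\ref{t:Puiseux} gives, after an analytic change of coordinates, that $U^{-1}(c)$ equals a finite union of injective analytic arcs $\gamma_1,\dots,\gamma_{N_p}$ defined on intervals $[-\epsilon_i,\epsilon_i]$ with $\gamma_i(0)=p$ and whose images meet pairwise only at $p$ (the alternative ``$U^{-1}(c)=\{p\}$'' is excluded). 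I would also record that $U^{-1}(c)$ is locally path-connected (this again follows from Theorem~\ref{t:Puiseux}, as in Corollary~\ref{c:crit_locally_path_connected}), so $\Gamma$ is open and closed in $U^{-1}(c)$ and the local models of $\Gamma$ and of $U^{-1}(c)$ coincide near points of $\Gamma$. Because $0$ is an interior parameter of each $\gamma_i$, deleting $p$ splits each arc image into two disjoint \emph{half-branches}; hence near $p$ the level set is homeomorphic to the cone on the $2N_p$ half-branches (the ``prongs at $p$''), and the key point is that $2N_p$ is always even.

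Next I would show that the \emph{branch points} --- the $p$ with $N_p\ge 2$ --- form a discrete subset of $U^{-1}(c)$: if $N_p\ge 2$, any nearby $q\ne p$ lies on exactly one $\gamma_i$ at an interior parameter, so $q$ has an interval neighborhood and $N_q=1$. Since $\Gamma$ is compact, it contains only finitely many branch points; let $V$ be this finite set (with $N_v<\infty$ by Theorem~\ref{t:Puiseux}). Then $\Gamma\setminus V$ is a $1$-manifold, and I would check it has finitely many components: each component $C$ is open in $\Gamma$, its frontier lies in $V$ (a frontier point outside $V$ would have an interval neighborhood inside $\Gamma\setminus V$ meeting $C$, hence contained in $C$), and near any frontier vertex $C$ occupies at least one prong; as the total number of prongs over the finitely many vertices is finite, so is the number of components. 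For a component $C\cong\R$, parametrized by a homeomorphism $c:\R\to C$, I would prove $c(t)$ converges in $\Gamma$ as $t\to\pm\infty$: its set of subsequential limits lies in the finite set $V$ (a limit outside $V$ would, by local path-connectedness, lie in $C$, contradicting that $c$ is a homeomorphism onto a subset of the compact $\Gamma$), and it cannot contain two distinct points $v,v'$, for then --- choosing disjoint neighborhoods of all the limit points --- $c(t)$ would have to return infinitely often to the compact set outside them, producing a further subsequential limit there. Thus $\overline C$ is an arc joining two (possibly equal) vertices, or a loop at a single vertex; a component homeomorphic to $S^1$ would be clopen in $\Gamma$, forcing $\Gamma\cong S^1$ and $V=\varnothing$, a case we settle by declaring an arbitrary point a degree-$2$ vertex.

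This exhibits $\Gamma$ as a finite graph --- automatically planar, since $\Gamma\subset\R^2$ --- with vertex set $V$ and edges the arcs $\overline C$. Finally, with the paper's convention (a loop at $v$ counting twice), the degree of a vertex $v$ equals the number of edge-ends at $v$, which equals the number of prongs at $v$, namely $2N_v$: even. The analytic content is confined to invoking Puiseux's theorem; the bulk of the work, and the main obstacle, is the point-set bookkeeping of the previous paragraph --- proving that $\Gamma\setminus V$ has only finitely many components and that each non-closed component has a well-defined vertex as its limit at each end. The parity statement itself is immediate, since each Puiseux arc passes \emph{through} $p$ and therefore contributes two prongs.
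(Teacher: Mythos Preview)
Your proof is correct, and the parity argument (each Puiseux arc through a vertex contributes two prongs) is the same as the paper's, but you take a longer route to the graph structure. The paper does not single out branch points: it simply applies Puiseux's theorem at every $q\in\Gamma$ to obtain a neighborhood $B_q$ in which $U^{-1}(c)$ is a finite union of arcs through $q$, uses compactness of $\Gamma$ to extract a finite subcover $B_{q_1},\dots,B_{q_k}$, and then declares the vertex set to be $\{q_1,\dots,q_k\}$ and the edges to be the connected components of $\Gamma\setminus\{q_1,\dots,q_k\}$ --- regardless of whether any $q_i$ is a branch point. Finiteness of the edge set and the even-degree claim then follow directly from the local arc description in each $B_{q_i}$, with no need for your analysis of end-limits of components of $\Gamma\setminus V$ or for a separate treatment of the case $V=\varnothing$. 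Your vertex set is the more canonical one (and would matter if one wanted, say, the minimal graph structure on $\Gamma$), but the paper's arbitrary finite cover gets to the conclusion with considerably less point-set bookkeeping.
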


\begin{proof}
Let us assume that $U^{-1}(c)$ is not empty, and $\Gamma\subset U^{-1}(c)$ is a compact connected component that is not a singleton.  
By Theorem~\ref{t:Puiseux}, each point $q\in\Gamma$ has a bounded open neighborhood $B_q\subset\R^2$ such that $U^{-1}(c)\cap B_q$ is the union of finitely many arcs $\gamma_{q,1},...,\gamma_{q,n_q}$ with endpoints in $\partial B_q$, where each arc is without self-intersections and distinct arcs intersect only at $q$. Since $\Gamma$ is compact, there exist finitely many points $q_1,...,q_k$ such that $B_{q_1}\cup...\cup B_{q_k}$ contains $\Gamma$. We can now endow $\Gamma$ with a planar graph structure, whose vertices are $q_1,...,q_k$, and whose edges are the finitely many connected components of $\Gamma\setminus\{q_1,...,q_k\}$. Notice that every arc $\gamma_{q_i,j}$ intersects precisely two edges incident to the vertex $q_i$, or a single edge incident to the vertex $q_i$ at both ends. This, together with the fact that $\gamma_{q_i,j}\cap\gamma_{q_i,h}=\{q_i\}$ for all $j\neq h$, readily implies that the degree of each vertex $q_i$ is even.
\end{proof}

\begin{Cor}\label{c:local_extremum}
Let $U:\R^2\to\R$ be an analytic function such that the complement of a level set $\R^2\setminus U^{-1}(c)$ has a non-empty bounded connected component. Then $U$ has a strict local maximum or a strict local minimum.
\end{Cor}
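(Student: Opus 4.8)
The plan is to use Corollary~\ref{c:planar_graph} to analyze the topological structure of the boundary of the bounded complementary component, and then derive from this a level set of $U$ that is a smooth Jordan curve, inside which $U$ attains a strict extremum. Let $\Omega$ be a non-empty bounded connected component of $\R^2\setminus U^{-1}(c)$, and let $\Gamma:=\partial\Omega$, which is a non-empty compact subset of $U^{-1}(c)$ contained in a single connected component of that level set. First I would note that $\Gamma$ cannot be a singleton: if $\Gamma=\{p\}$ then $\Omega\cup\{p\}$ would be open, forcing $\Omega=\R^2\setminus\{p\}$, contradicting boundedness. Hence by Corollary~\ref{c:planar_graph}, $\Gamma$ carries the structure of a planar graph all of whose vertices have even degree, so $\Gamma$ admits an Eulerian structure; in particular $\Gamma$ contains an embedded cycle, i.e.\ a Jordan curve $J\subset\Gamma\subset U^{-1}(c)$.

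Next I would locate a point of $\Gamma$ where $\nabla U$ is nonzero, or, failing that, reduce to the case where $\Gamma\subset\crit(U)$. Observe that if every point of $\Gamma$ were critical, then on the region bounded by the Jordan curve $J$, the value $c=U|_\Gamma$ is attained on all of $\partial(\mathrm{int}\,J)$; since $U$ is analytic and hence satisfies the maximum principle only in a weak sense, I would instead argue directly: take a point $x_0$ in the bounded open region $\mathrm{int}\,J$ where $|U(x_0)-c|$ is maximal (using that $\overline{\mathrm{int}\,J}$ is compact). If $U(x_0)\neq c$ for some such $x_0$, then $U$ attains at $x_0$ a value that is either $>c$ or $<c$; WLOG $>c$, and then a connected component of $\{U>U(x_0)-\delta\}$ containing $x_0$, for small $\delta$, is relatively compact in $\mathrm{int}\,J$, and inside it $U$ attains its supremum, which is a strict local maximum by the open mapping principle for the non-constant analytic function $U$ restricted near a maximizer (a level-set-dimension argument: the set where the maximum is attained cannot contain a non-degenerate arc by Puiseux, else $U$ would be constant along it, contradiction with strictness of interior max — this needs care). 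If instead $U\equiv c$ on all of $\overline{\mathrm{int}\,J}$, then $U$ is constant on an open set and, being analytic, constant everywhere, contradicting the hypothesis that $\R^2\setminus U^{-1}(c)$ is non-empty.

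The main obstacle I anticipate is making rigorous the passage from "$U$ attains a non-$c$ value somewhere inside the Jordan curve" to "$U$ has a \emph{strict} local extremum." The issue is that an interior extremum of an analytic function of two variables need not be strict (e.g.\ $U$ could be constant along an arc through the maximizer). To handle this I would invoke Puiseux (Theorem~\ref{t:Puiseux}) applied to $U-m$ at an interior maximizer $x_1$ with $m:=U(x_1)=\max_{\overline{\mathrm{int}\,J}}U>c$: the level set $(U-m)^{-1}(0)$ near $x_1$ is either $\{x_1\}$ — in which case $x_1$ is a strict local max since nearby $U\le m$ with equality only at $x_1$ — or a finite union of analytic arcs through $x_1$. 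In the latter case, pick such an arc $\eta$; along $\eta$ we have $U\equiv m$, but $\eta$ must exit the region $\{U\ge m-\delta\}\cap\mathrm{int}\,J$ (since that region, for small $\delta$, is a compact neighborhood of the maximum set), so $\eta$ reaches a point where $U=m$ while being a limit of interior points — and then sliding along $\eta$ and using that $m$ is the global max on $\overline{\mathrm{int}\,J}$, one shows the whole arc $\eta$ lies in the interior and $U=m$ on it; iterating over all arcs through all such points, the set $\{U=m\}\cap\mathrm{int}\,J$ would be relatively closed and, by Puiseux, also relatively open in a suitable sense, hence a union of full connected components — but it is bounded away from $\Gamma$ where $U=c<m$, a contradiction once one checks it must accumulate somewhere it cannot. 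The cleanest route is probably: if no strict local extremum exists, then by the above $\{U=m\}$ is a non-trivial analytic curve which, being compact (inside $\overline{\mathrm{int}\,J}$) and a level set, bounds its own complementary component, and one re-runs the argument, eventually forcing $U$ constant. I would present it as: either we get a strict local max inside $\mathrm{int}\,J$, or $U\equiv m$ on $\mathrm{int}\,J$ hence (analyticity) $U$ is globally constant, contradiction.
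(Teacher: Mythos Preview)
Your recursive idea is on the right track and is essentially the skeleton of the paper's argument, but the way you terminate the recursion has a real gap. The dichotomy you offer at the end---``either we get a strict local max inside $\interior(J)$, or $U\equiv m$ on $\interior(J)$''---is false: for $U(x,y)=-(x^2+y^2-1)^2$ with $J$ a large circle, the maximum $m=0$ is attained along the entire unit circle, so there is no strict local maximum, yet $U$ is not constant on $\interior(J)$. Hence the recursion need not stop after one step, and your alternative phrasing ``eventually forcing $U$ constant'' when the recursion runs indefinitely is not justified either: an infinite nested sequence of Jordan curves $J_i$ at pairwise distinct levels does not by itself force $U$ to be constant. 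The paper closes precisely this gap. It observes that each nested boundary consists of interior extrema and hence lies in $\crit(U)$, uses Sard's theorem to show that consecutive boundaries are separated by a regular level curve and therefore lie in \emph{distinct} path-connected components of $\crit(U)$, and then contradicts Corollary~\ref{c:crit_locally_path_connected} (local path-connectedness of $\crit(U)$) at an accumulation point of the nested curves. You never invoke Corollary~\ref{c:crit_locally_path_connected}, and without it I do not see how to conclude.

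A minor issue: your opening application of Corollary~\ref{c:planar_graph} to $\Gamma=\partial\Omega$ is not clean. The boundary $\partial\Omega$ need not be connected, and the connected component of $U^{-1}(c)$ containing a given point of $\partial\Omega$ need not be compact (e.g.\ for $U(x,y)=x(x^2+y^2-1)$ and $c=0$, the level set is connected and unbounded, yet there are bounded complementary components). This is easy to repair---you can work directly with $\overline{\Omega}$, since $U=c$ on $\partial\Omega$ and $U\neq c$ on $\Omega$, so the relevant extremum is interior; the paper instead passes to the simply connected hull of $\Omega$---but as written the step does not go through.
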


\begin{proof}
  We assume by contradiction that $U$ does not have a strict local maximum nor a strict local minimum. By assumption, $\R^2\setminus U^{-1}(c)$ has a non-empty bounded connected component $V$.   
The complement $\R^2 \setminus V$ has a unique unbounded connected component $W$.
We define $A_1:=\R^2\setminus W$, which is a bounded connected open set containing $V$.
We recall that a connected open subset of $\R^2$ is simply connected if and only if its complement has no bounded connected component (see, e.g., \cite[Corollaries~1-2]{Basye:1935aa}).  Therefore $A_1$ is simply connected. Moreover the boundary of $A_1$ is contained in $U^{-1}(c)$.
Let us assume that $U|_{V}<c$, the case in which $U|_{V}>c$ being analogous. We set $a_1:=\min U|_{A_1}$. The intersection $U^{-1}(a_1)\cap A_1$ is compact, and thus it is a union of finitely many connected components of the level set $U^{-1}(a_1)$. Notice that $U^{-1}(a_1)$ does not contain isolated points, for otherwise any such point would be a strict local minimum of $U$. 
Corollary~\ref{c:planar_graph} implies that $U^{-1}(a_1)\cap A_1$ is a planar graph whose vertices have non-zero even degrees. In particular, $U^{-1}(a_1)\cap A_1$ contains a loop that bounds a simply connected component $B_1$ of $A_1 \setminus U^{-1}(a_1)$.

We set $b_1:=\max U|_{B_1}$. Arguing as in the previous paragraph, the intersection $U^{-1}(b_1)\cap B_1$ is a compact planar graph, and contains a loop that bounds a simply connected component $A_2$ of $B_1\setminus U^{-1}(b_1)$. Next we define $a_2:=\min U|_{A_2}$, and continue the process iteratively. Overall, we obtained a sequence of simply connected non-empty open sets $A_i,B_i\subset \R^2$ such that
\begin{align*}
 \overline {B_i}\subset A_i,
 \quad
 \overline {A_{i+1}}\subset B_i,
 \quad
 \partial B_i\subset \crit(U)\cap U^{-1}(a_i),
 \quad
 \partial A_{i+1}\subset \crit(U)\cap U^{-1}(b_i),
\end{align*}
where $a_i<b_i$. By Sard theorem, the interval $(a_i,b_i)$ contains a full measure subset of regular values of $U$. Therefore, the boundaries $\partial B_i$ and $\partial A_{i+1}$ belong to distinct path-connected components of $\crit(U)$. This further implies that, for each $i< j$, the boundaries $\partial B_i$ and $\partial B_j$ belong to distinct path-connected components of $\crit(U)$, since they belong to distinct path-connected components of the complement of $\partial A_{i+1}$.
Consider an arbitrary sequence $q_i\in\partial B_i$, which is contained in the bounded set $B_1$. Up to extracting a subsequence, we have $q_i\to q\in\crit(U)$ as $i\to\infty$. Since all the points $q_i$ belong to pairwise distinct path-connected components of $\crit(U)$, we infer that $\crit(U)$ is not locally path-connected at $q$. This contradicts Corollary~\ref{c:crit_locally_path_connected}.
\end{proof}

\begin{proof}[Proof of Theorem~\ref{t:analytic}]
Let $U:\R^2\to\R$ be an analytic Levi potential. If $U$ is a constant function, it is also trivially radial. Assume now that $U$ is not constant, and let $c\in\R$ be a regular value of $U$ such that $U^{-1}(c)$ is not empty. By Proposition~\ref{p:regular_level_orbit_periodic}, the connected components of the level set $U^{-1}(c)$ are circles. Therefore, $\R^2\setminus U^{-1}(c)$ has a bounded connected component, and Corollary~\ref{c:local_extremum} implies that $U$ has a strict local minimum or a strict local maximum.

We claim that $U$ cannot have a strict local maximum. Indeed, assume that $q\in\R^2$ is a strict local maximum of $U$, and set $c:=U(q)$. Let $W\subset\R^2$ be a sufficiently small compact neighborhood of $q$ such that $U|_{W\setminus\{q\}}<c$. By Corollary~\ref{c:crit_locally_path_connected}, $\crit(U)$ is locally path-connected, and therefore $\crit(U)\cap W=\{q\}$ provided $W$ is chosen small enough. For each $\epsilon>0$ small enough, the level set $U^{-1}(c-\epsilon)$ has a connected component $\gamma$ contained in $W$. Since $c-\epsilon$ is a regular value of $U|_W$, $\gamma$ is an embedded circle in $W$. Notice that $\gamma$ bounds a disk $B$ that must contain a local maximum or a local minimum of $U$. Since the only critical point of $U|_W$ is $q$, we infer that $\crit(U)\cap B=\{q\}$. Since $q$ is a local maximum of $U$, we infer that, for each $q'\in\gamma$, the gradient $\nabla U(q')$ points inside $B$. Lemma~\ref{l:level_orbit_const_speed} implies that the curvature of $\gamma$ with respect to the normal vector field $\nabla U/\|\nabla U\|$ is everywhere positive, but this is impossible, as it would prevent $\gamma$ to encircle $q$.

Since $U$ cannot have strict local maxima, it must have a strict local minimum $q_0$. Without loss of generality, let us assume that $q_0=0$ and $U(q_0)=0$. With the same argument already employed in the last paragraph, There exists $\epsilon>0$ such that, for each $c\in(0,\epsilon]$, the level set $U^{-1}(c)$ has a connected component  that is a circle, does not contain critical points of $U$, and encircles the origin. Let $\gamma_0\subset U^{-1}(0,\epsilon]$ be any such circle. Since $U$ is a Levi potential, $\gamma_0$ must be a regular level orbit. We now proceed as in the proof of Theorem~\ref{t:smooth}: we apply to $\gamma_0$ the inverse curvature flow~\eqref{e:PDE} in negative time, and obtain family of curves $\gamma_t$ defined for $t$ in some neighborhood of 0 in $(-\infty,0]$. Proposition~\ref{p:level_orbit_inv_curv_flow} implies that each $\gamma_t$ is a regular level orbit for the Levi potential, and Lemma~\ref{l:const_speed} implies $\gamma_t$ is the boundary of a convex open subset $B_t$ containing the origin. Arguing as in the proof of Theorem~\ref{t:smooth}, we infer that $\gamma_t$ is defined for all $t\leq0$. Proposition~\ref{p:periodic} implies that $\gamma_t$, for $t\leq0$, is a circular solution of the inverse curvature flow. This proves that $U$ is radial in $B_0$. The radial condition can be expressed by saying that the function $V(q):=dU(q)Jq$ vanishes on $B_0$. However, $V$ is analytic, and since it vanishes in the open set $B_0$ it must vanish identically on the whole $\R^2$. This proves that $U$ is a radial function.
\end{proof}

\subsection{Levi potentials with prescribed critical set}
The proof of Theorem~\ref{t:nonradial} is a consequence of Lemmas~\ref{l:PDE_support}, \ref{l:PDE_support_2}, and of the following statement.

\begin{Lemma}\label{l:smoothing}
Let $f:\R^n\to[0,+\infty)$ be a proper continuous function such that $\min f=0$ and whose restriction to $f^{-1}(0,+\infty)$ is smooth and has no critical points. There exists a smooth function $g:\R^n\to[0,+\infty)$ with critical set $\crit(g)=g^{-1}(0)=f^{-1}(0)$ and having the same level sets as $f$, i.e.~$f(x)=f(y)$ if and only if $g(x)=g(y)$ for all $x,y\in\R^n$.
\end{Lemma}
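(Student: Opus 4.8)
The plan is to construct $g$ as a suitable function of $f$ on each "level shell" and then verify smoothness at the transition $f^{-1}(0)$. Since the conclusion is purely about the existence of a function with the same level sets as $f$ but better regularity at the minimum set, the key point is that $f$ is already smooth and submersive away from $K:=f^{-1}(0)$, so the only place where something can go wrong is near $K$, where $f$ may fail to be even $C^1$.

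\medskip

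\textbf{Step 1: reduce to a model near the minimum set.} First I would exploit properness and $\min f = 0$ to note that $f^{-1}([0,c])$ is compact for every $c>0$, and that for each regular value $c>0$ the level set $f^{-1}(c)$ is a compact smooth hypersurface. The idea is to pick a sequence of regular values $c_1 > c_2 > \dots \to 0$ (Sard's theorem guarantees regular values are dense) and to "damp" $f$ more and more aggressively as one approaches $K$. Concretely I would look for $g$ of the form $g = \varphi \circ f$, where $\varphi:[0,+\infty)\to[0,+\infty)$ is a homeomorphism that is smooth and strictly increasing on $(0,+\infty)$ with $\varphi(0)=0$, chosen so that all derivatives $\varphi^{(k)}$ decay fast enough as $t \to 0^+$. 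Then $g$ has exactly the same level sets as $f$ (since $\varphi$ is injective), $\crit(g) = \crit(f|_{\mathrm{reg}}) \cup K = K$ (on the regular part $\nabla g = \varphi'(f)\nabla f \neq 0$ since $\varphi' > 0$), and $g^{-1}(0) = f^{-1}(0) = K$. The only thing to check is that $g$ extends smoothly across $K$, i.e. that all partial derivatives of $\varphi\circ f$ extend continuously by $0$ onto $K$.

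\medskip

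\textbf{Step 2: estimate the derivatives of $f$ near $K$ and choose $\varphi$.} On the region $\{0 < f \le c_1\}$, which is relatively compact minus $K$, one controls $f$ and its derivatives only up to blow-up near $K$. The crucial observation is that on the shell $S_j := \{c_{j+1} \le f \le c_j\}$ (a compact subset of the open set where $f$ is smooth) every partial derivative $\partial^\alpha f$ is bounded by some constant $M_{j,|\alpha|}$. By the Faà di Bruno formula, a partial derivative $\partial^\beta(\varphi\circ f)$ of order $m$ on $S_j$ is a sum of terms of the form $\varphi^{(k)}(f)\cdot(\text{product of } \partial^{\alpha_i} f)$ with $1 \le k \le m$, hence is bounded on $S_j$ by $C_{j,m}\max_{1\le k\le m}\sup_{[c_{j+1},c_j]}|\varphi^{(k)}|$ for a constant $C_{j,m}$ depending only on the $M_{j,\cdot}$. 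So it suffices to choose $\varphi$ smooth on $(0,+\infty)$, $\varphi(0)=0$, $\varphi'>0$, with $\varphi^{(k)}(t)\to 0$ as $t\to 0^+$ fast enough that $C_{j,m}\sup_{[c_{j+1},c_j]}|\varphi^{(k)}| \to 0$ as $j\to\infty$ for each fixed $m$; a standard choice like $\varphi(t) = e^{-1/t}$ for small $t$ (smoothly interpolated to, say, $\varphi(t)=t$ for $t\ge c_1$) has all derivatives vanishing at $0$ faster than any power, and one can always post-compose with a further fast-decaying factor if the $C_{j,m}$ grow too quickly, since only countably many constraints are involved. This makes $\partial^\beta(\varphi\circ f)\to 0$ uniformly as one approaches $K$, for every multi-index $\beta$.

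\medskip

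\textbf{Step 3: conclude smoothness across $K$.} With such a $\varphi$, define $g := \varphi\circ f$ on $\{f>0\}$ and $g:=0$ on $K$. On $\{f>0\}$ it is visibly smooth. At a point $p\in K$, one shows by induction on $|\beta|$ that $\partial^\beta g(p)=0$ and $\partial^\beta g$ is continuous at $p$: the inductive step is the standard lemma that if a function is $C^{|\beta|-1}$, its $(|\beta|-1)$-st derivatives are differentiable off $K$ with derivatives tending to $0$ at $K$, then it is $C^{|\beta|}$ with vanishing $|\beta|$-th derivative there (apply the mean value theorem along segments from $p$ into $\{f>0\}$, using that $K$ need not be nice but $g\equiv 0$ on it). Hence $g\in C^\infty(\R^n)$, $g\ge 0$, $\crit(g)=g^{-1}(0)=K=f^{-1}(0)$, and $g$ and $f$ have the same level sets.

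\medskip

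The main obstacle I anticipate is \emph{organizing the derivative bounds uniformly}: the constants $C_{j,m}$ coming from Faà di Bruno depend on the (possibly unbounded as $j\to\infty$) sup-norms $M_{j,|\alpha|}$ of the derivatives of $f$ on the shells $S_j$, so $\varphi$ must be chosen to beat all of them simultaneously. This is handled by a diagonal argument: since the required conditions are a countable family ("for each $m$ and each $j\ge j_0(m)$, $C_{j,m}\sup_{[c_{j+1},c_j]}|\varphi^{(k)}| \le 2^{-j}$"), and each only constrains $\varphi$ on the interval $[c_{j+1},c_j]$, one builds $\varphi$ shell by shell, making it flat enough on $[c_{j+1},c_j]$, and glues with cutoff functions; the compatibility of the gluings is where one must be slightly careful, but it is routine. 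A secondary, genuinely topological point is that $K=f^{-1}(0)$ can be an arbitrary closed set, so the smoothness argument at $K$ must not use any manifold structure of $K$ — it only uses that $g$ vanishes identically on $K$ together with the uniform decay of the derivatives of $g$ as one approaches $K$ from outside, which is exactly what Step 2 delivers.
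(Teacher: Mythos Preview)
Your proposal is correct and shares the paper's core idea: write $g = h\circ f$ for a strictly increasing $h:[0,\infty)\to[0,\infty)$, smooth on $(0,\infty)$ and sufficiently flat at $0$, so that the composition extends smoothly across $K=f^{-1}(0)$. The difference lies in how this $h$ is produced and how smoothness at $K$ is verified. You argue the existence of $h=\varphi$ by a shell-by-shell diagonal construction and then need your Step~3 (uniform decay of derivatives plus a mean-value argument) to check smoothness across $K$. The paper instead gives an explicit series: fixing a smooth $\psi$ that vanishes on $[0,1]$ and has $\dot\psi>0$ on $(1,\infty)$, it sets $f_k:=\psi(kf)$ and $g:=\sum_{k\ge1}(e^kc_k)^{-1}f_k$ with $c_k:=1+\|f_k|_{\{f\le1\}}\|_{C^k}$. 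Each summand $f_k$ vanishes \emph{identically} on the neighborhood $\{f<1/k\}$ of $K$ and is therefore globally smooth outright; smoothness of $g$ on all of $\R^n$ then reduces to $C^p$-convergence of the series, with no separate argument at $K$ needed. The paper's route is shorter and sidesteps your Step~3 entirely; your version makes more transparent that the only obstruction is a countable family of derivative bounds on the shells, and would adapt more readily if one wanted finer control on the reparametrization $h$ itself.
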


\begin{proof}
Let $\psi:[0,+\infty)\to[0,+\infty)$ be a smooth function such that $\psi|_{[0,1]}\equiv0$, $\dot\psi|_{(1,2]}>0$, and $\dot\psi|_{[2,+\infty)}\equiv1$. For each integer $k\geq1$, we define
\begin{align*}
 f_k:\R^n\to[0,+\infty),\qquad f_k(x)=\psi(kf(x)).
\end{align*}
We introduce the compact subsets 
\[W_k:=f^{-1}[0,\tfrac1k].\] 
Notice that $f$ is smooth outside $f^{-1}(0)$,  and $f_k|_{W_k}\equiv0$. Therefore $f_k$ is everywhere smooth. We set
$c_k:=1+\|f_k|_{W_1}\|_{C^k}$, and define the function
\begin{align*}
g:\R^n\to[0,+\infty),
\qquad
g(x)
=
\sum_{k\geq1}
\frac{f_k(x)}{e^kc_k}.
\end{align*}
Since $f$ is smooth away from $f^{-1}(0)$, on every compact subset $V\subset\R^n\setminus f^{-1}(0)$ and for every integer $p\geq1$, we have
\begin{align*}
 \big\|f_k|_{V}\big\|_{C^p} 
 \leq 
 b\, k^p,
\end{align*}
where $b=b(\psi,f,p,V)>0$ is independent of the integer $k$.
This estimate implies that $g|_{V}$ is smooth. Moreover,
\begin{align*}
\big\|f_k|_{W_1}\big\|_{C^p} 
\leq
\max
\Big\{ \big\|f_1|_{W_1}\big\|_{C^p},\big\|f_2|_{W_1}\big\|_{C^p},...,\big\|f_p|_{W_1}\big\|_{C^p}, c_k \Big\}.
\end{align*}
This shows that $g|_{W_1}$ is smooth as well, and therefore $g$ is everywhere smooth. Notice that $g^{-1}(0)=f^{-1}(0)$. Moreover, for each $x\in\R^n\setminus f^{-1}(0)$, we have $df(x)\neq0$, and therefore
\begin{align*}
dg(x)=\underbrace{\sum_{k\geq1} \frac{k\,\dot\psi(k f(x))}{e^kc_k}}_{>0}df(x)\neq0. 
\end{align*}
This implies that $\crit(g)=g^{-1}(0)=f^{-1}(0)$. Finally, since $g$ is of the form $h\circ f$ for a strictly monotone increasing function $h:[0,+\infty)\to[0,+\infty)$, we conclude that $f$ and $g$ have the same level sets.
\end{proof}

\begin{proof}[Proof of Theorem~\ref{t:nonradial}]
Let $C$ be a non-empty compact convex subset of $\R^2$. If $C=\{q_0\}$, then $U(q)=\|q-q_0\|^2$ is a Levi potential with critical set $\crit(U)=\{q_0\}$. Assume now that $C$ is not a singleton, so that its support function
\begin{align*}
 h_0:\Su\to\R,\qquad h_0(s)=\max_{q\in C} \langle q,u(s) \rangle,
\end{align*}
satisfies $h_0+\ddot h_0\geq0$ and $h_0+\ddot h_0\neq0$ in the sense of distributions. Here, as in Section~\ref{ss:PDE_support}, $u(s)=(\cos(s),\sin(s))$. By Lemma~\ref{l:PDE_support_2}, using the support function $h_0$ as initial condition, there exists a unique continuous solution $h:[0,+\infty)\times\Su\to\R^2$, $h(t,s)=h_t(s)$ of the PDE $\partial_t h_t= h_t+\ddot h_t$, such that:
\begin{itemize}
\item[(i)] $h$ is smooth and satisfies $h_t+\ddot h_t>0$ on $(0,+\infty)\times\Su$,

\item[(ii)] $h$ has the form $h_t(s) = e^t z + b(t,s)$, 
where $b:[0,+\infty)\times\Su\to\R^2$ is uniformly bounded.
\end{itemize}
Point (i) implies that, for each $t>0$, $h_t$ is the support function of a compact convex subset $C_t\subset\R^2$ with smooth positively-curved boundary. Moreover, since $\partial_th_t>0$ for all $t>0$, we have $C_{t_1}\subset \interior(C_{t_2})$ for all $t_1<t_2$. Since $\lim_{t \to 0+} h_t(s)=h_0(s)$ for all $s\in \Su$,
we have
\begin{align*}
 C_0=\bigcap_{t>0} C_t.
\end{align*}
Point (ii) implies that $\min h_t\to+\infty$ as $t\to+\infty$, and therefore the family $C_t$, $t>0$, is an exhaustion by compact sets of $\R^2$, i.e.
\begin{align*}
 \bigcup_{t>0} C_t = \R^2.
\end{align*}
By Lemma~\ref{l:PDE_support}, there exists a family of smooth periodic curves $\gamma_t:\Su\to\R^2$, $t>0$, that evolve according to the inverse curvature flow, and each $\gamma_t$ is a parametrization of the boundary of $C_t$. Without loss of generality, we can assume that each $\gamma_t$ is parametrized with constant speed $\|\dot\gamma_t\|$.
Notice that this family, seen as a map 
\[\gamma:(0,+\infty)\times\Su\to\R^2\setminus C_0,\qquad\gamma(t,s)=\gamma_t(s),\]
is a diffeomorphism. We define the continuous function $\tau:\R^2\to[0,+\infty)$ by $\tau|_{C_0}\equiv0$ and $x\in\gamma_{\tau(x)}(\Su)$ for all $x\in\R^2\setminus C_0$. Notice that $\tau$ is proper. Moreover,
the restriction $\tau|_{\R^2\setminus C_0}$ is smooth, strictly positive, and has no critical points. By Lemma~\ref{l:smoothing}, there exists a smooth function $U:\R^2\to[0,+\infty)$ with critical set $\crit(U)=U^{-1}(0)=\tau^{-1}(0)=C_0$ and having the same level sets as $\tau$. Namely, the level sets of $U$ are $C_0$ and the curves $\gamma_t$. We set $w(t):=U(\gamma_t(s))$, and stress that $w(t)$ is independent of $s\in\Su$ and smooth for all $t>0$. Moreover, since $\nabla U(\gamma_t(s))$ is positively proportional to the normal vector $N_{\gamma_t}(s)$, we have 
\begin{align*}
 \dot w(t)
 =
 \langle\nabla U(\gamma_t(s)),\partial_t\gamma_t(s)\rangle
 =
 \frac{\|\nabla U(\gamma_t(s))\|}{K_{\gamma_t}(s)}>0.
\end{align*}
The family of reparametrized curves
\[q_t(s):=\gamma_t\Big(\tfrac{\sqrt{\dot w(t)}}{\|\dot\gamma_t\|}s\Big)\]
are level orbits of $U$. Indeed, for $r=\tfrac{\sqrt{\dot w(t)}}{\|\dot\gamma_t\|}s$, we have
\begin{align*}
 \ddot q_t(s) 
 &= 
 \tfrac{\dot w(t)}{\|\dot\gamma_t\|^2}\ddot\gamma_t(r)
 =
 -\tfrac{\dot w(t)}{\|\dot\gamma_t\|^2} \|\dot\gamma_t\|^2 K_{\gamma_t}(r) N_{\gamma_t}(r)\\
 &=
 -\|\nabla U(q_t(s))\| N_{q_t}(s)
 =
 -\nabla U(q_t(s)).
\end{align*}
This shows that the collection of all level orbits $q_t$, for $t>0$, fills the regular set $\reg(U)$. All the points of critical set $\crit(U)$ are trivially level orbits. Therefore, $U$ is a Levi potential.
\end{proof}

\bibliography{_biblio}

\end{document}